\newtheorem{definition}{Definition}
\newtheorem{lemma}[definition]{Lemma}
\newtheorem{theorem}[definition]{Theorem}
\newtheorem{corollary}[definition]{Corollary}
\newtheorem{example}[definition]{Example}
\newtheorem{remark}[definition]{Remark}
\newcommand*{\N}{\ensuremath{\mathbb{N}}}
\newcommand*{\Z}{\ensuremath{\mathbb{Z}}}
\newcommand*{\R}{\ensuremath{\mathbb{R}}}
\newcommand*{\C}{\ensuremath{\mathbb{C}}}
\renewcommand{\i}{\mathrm{i}}
\renewcommand{\phi}{\varphi}
\renewcommand{\d}[1]{\,\mathrm{d}#1 \,}
\newcommand{\dS}{\,\mathrm{dS} \,}
\newcommand{\D}{\mathcal{D}}
\newcommand{\ol}[1]{\overline{#1}}
\newcommand{\J}{\mathcal{J}} 
\renewcommand{\H}{{\mathcal{H}}}
\renewcommand{\S}{\mathbb{S}}
\renewcommand{\Re}{\mathrm{Re}\,}
\renewcommand{\Im}{\mathrm{Im}\,}
\newcommand{\divSpace}{\mathrm{div}\,}
\renewcommand{\tilde}[1]{{\widetilde{#1}}}
\newcommand{\LambdaAst}{{\Lambda^{\hspace*{-1pt}\ast}}}
\newcommand{\W}{{W_{\hspace*{-1pt}\Lambda}}} 
\newcommand{\Wast}{{W_{\hspace*{-1pt}\LambdaAst}}} 
\newcommand{\talpha}{{\tilde{\bm{\alpha}}}} 
\newcommand{\tx}{{\tilde{\bm{x}}}} 
\newcommand{\x}{{{\bm{x}}}} 
\renewcommand{\j}{{{\bm{j}}}} 
\newcommand{\z}{{\bm{z}}} 
\newcommand{\0}{{\bm{0}}} 
\newcommand{\y}{{\bm{y}}} 
\newcommand{\bell}{{\bm{\ell}}} 
\newcommand{\e}{{\mathbf{e}}}
\newcommand{\HH}{{\mathcal{H}}}
\newcommand{\WW}{{\mathcal{W}}}  
\newcommand{\s}{{\mathrm{s}}}
\renewcommand{\rho}{{\varrho}} 
\renewcommand{\epsilon}{{\varepsilon}}
\newcommand{\loc}{{\mathrm{loc}}}
\newlength{\dhatheight}
\newcommand{\dhat}[1]{%
    \settoheight{\dhatheight}{\ensuremath{\hat{#1}}}%
    \addtolength{\dhatheight}{-0.35ex}%
    \hat{\vphantom{\rule{1pt}{\dhatheight}}%
    \smash{\hat{#1}}}}
\begin{document}

\sloppy

\title{The Floquet-Bloch Transform and Scattering from Locally Perturbed Periodic Surfaces}
\author{Armin Lechleiter\thanks{Center for Industrial Mathematics, University of Bremen, Bremen, Germany; \texttt{lechleiter@math.uni-bremen.de}}}

\maketitle

\begin{abstract}
We use the Floquet-Bloch transform to reduce variational formulations of surface scattering problems for the Helmholtz equation from periodic and locally perturbed periodic surfaces to equivalent variational problems formulated on bounded domains. 
To this end, we establish various mapping properties of that transform between suitable weighted Sobolev spaces on periodic strip-like domains and coupled families of quasiperiodic Sobolev spaces. 
Our analysis shows in particular that the decay of solutions to surface scattering problems  from locally perturbed periodic surfaces is precisely characterized by the smoothness of its Bloch transform in the quasiperiodicity. 
\end{abstract}

\section{Introduction}
We analyze time-harmonic surface scattering modeled by the scalar Helmholtz equation from a periodic or a locally perturbed periodic surface $\Gamma$.  
A fundamental motivation to study surface scattering problems involving periodicity is the growing industrial importance of micro or nano-structured surfaces in optics, requiring ever more accurate models and simulations. 
Neglecting periodicity, such scattering problems can be considered as rather particular rough surface scattering problems and tackled by variational or integral equation formulations~\cite{Chand1999, Chand2005, Chand2006a, Chand2010}. 
Unfortunately, these formulations are posed on unbounded domains such that, e.g., convergence analysis for any numerical discretization automatically is non-standard.

Motivated by the recent paper~\cite{Coatl2012} which treats the Helmholtz equation in a periodic medium with a line defect by the (Floquet-)Bloch transform, we show in this paper that an analogous partial transform can be used to transform scattering problems for unbounded periodic surfaces with local perturbations to equivalent problems on bounded domains. 
Any variational formulation of such a transformed problems hence possesses the advantage of straightforward discretization by standard techniques.
Note that~\cite{Hadda2015} analyzes the discretization of such a problem in two dimensions in a setting that is somewhat easier due to absorption, and that~\cite{Fliss2016} uses the Bloch transform to study scattering in an infinite wave guide. 

The Floquet-Bloch transform can be interpreted as, roughly speaking, a sort of periodic Fourier transform, see~\cite{Reed1987,Kuchm1993}.
Despite mapping properties of that transform are in principle known, in particular its isometry property on $L^2$-spaces, we prove that a certain partial Bloch transform is an isomorphism between weighted Sobolev spaces on periodic surfaces or periodic strips and coupled families of Sobolev spaces of quasiperiodic functions.
(Suitable references seem to be lacking.)
Even if these mapping properties are independent of dimension, we restrict ourselves to surfaces and domains of dimension two and three, respectively, see Remark~\ref{rem:1}. 
(See further~\cite[Annexe B]{Fliss2009} for corresponding results in the one-dimensional case.)

The partial Bloch transform allows to equivalently reformulate scattering problems from periodic or locally perturbed periodic surfaces as a (coupled) family of quasiperiodic scattering problems on, roughly speaking, a unit cell of the periodic domain. 
Apart from the Bloch transform, our main technique for treating perturbed periodic surfaces is a suitable diffeomorphism between the perturbed periodic domain and the unperturbed one. 
We exploit this equivalence to analyze solutions to scattering problems for particular incident Herglotz wave functions that may serve, e.g., as models for incident Gaussian beams. 
For simplicity, we restrict ourselves to Robin or impedance boundary conditions involving a periodic coefficient, and merely comment on other boundary conditions. 

Our results in particular show that the decay of the solution to a scattering problem from a (perturbed) periodic surface is characterized by smoothness of the solution to the transformed problem in the quasiperiodicity.
Decay results for solutions to rough surface scattering problems with Dirichlet boundary condition have been established previously in~\cite{Chand2010}; some of these results are, roughly speaking, validated by our findings for different boundary conditions (see Remark~\ref{th:chandler}).  
We further indicate that for a particular class of incident Herglotz wave functions, the wave field scattered from a (locally perturbed) periodic surface decays more rapidly than the bounds from~\cite{Chand2010} would suggest. 

The remainder of this paper is structured as follows: 
After presenting the Bloch transform in $\R^2$ in Section~\ref{se:bloch}, we introduce function spaces to analyze this transform in Section~\ref{se:functionSpaces}. 
Next, Sections~\ref{se:propBloch},~\ref{se:blochSurf}, and~\ref{se:blochDom} show properties of the Bloch transform on $\R^2$, on (bi-)periodic surfaces, and on periodic domains, respectively. 
Section~\ref{se:perioSurfScatt} introduces surface scattering problems of incident acoustic waves from periodic surfaces and tackles those by the Bloch transform. 
Section~\ref{se:regDec} extends these results by regularity estimates for the solution in the quasiperiodicity parameter. 
Section~\ref{se:pertScatt} finally tackles similar problems for perturbed periodic surface scattering via suitable diffeomorphisms.

\textit{Notation:} 
We write $\tx = (\x_1,\x_2)^\top$ for points $\x=(\x_1,\x_2,\x_3)^\top$ in $\R^3$ and set, in analogy, $\tilde\nabla f= (\partial_{\x_1} f, \partial_{\x_2} f)^\top$, whereas $\nabla f = (\partial_{\x_1} f, \partial_{\x_2} f, \partial_{\x_3} f)^\top$ is the gradient of $f$. 
By $\e_{(1,2,3)}$ we denote the standard basis vectors of $\R^3$.
All function spaces we consider generically contain complex-valued functions. 
The space of smooth functions in a domain $U$ with smooth extension to the boundary up to arbitrarily high order is $C^\infty(\overline{U})$. 
Constants $C$ and $c$ are generic and might change from line to line. 
We further use the symbol $\simeq$ to indicate equivalence of two expressions up to positive constants.

\section{The Bloch Transform in $\R^2$}\label{se:bloch}
In the entire paper we fix an invertible matrix $\Lambda\in \R^{2\times 2}$ and call a function or a vector field $\phi: \, \R^2 \to \C^d$ $\Lambda$-periodic if $\phi(\x+\Lambda j) = \phi(\x)$ for all $\x \in \R^2$ and all $j\in\Z^2$. 
It satisfies to require the latter condition in the fundamental domain of periodicity for the lattice $\{ \Lambda \j: \, \j\in\Z^2 \} \subset \R^2$, the so-called Wigner-Seitz-cell,
\[
  \W := \big\{ \Lambda \tilde{z}: \, \tilde{\z} \in \R^2, \, -1/2< \tilde{\z}_{1,2} \leq 1/2 \big\} \subset \R^2. 
\]
The dual periodicity matrix then equals $\LambdaAst := 2\pi \Lambda^{-\top}= 2\pi(\Lambda^\top)^{-1} \in \R^{2\times 2}$ and defines the dual (or reciprocal) lattice $\{ \LambdaAst \bell: \, \bell\in\Z^2 \}$, as well as the dual fundamental domain of periodicity, the so-called Brillouin zone  
\[
  \Wast := \big\{ \LambdaAst \tilde{\bm{\mu}}: \, \tilde{\bm{\mu}}\in \R^2, \, -1/2< \tilde{\bm{\mu}}_{1,2} \leq 1/2 \big\} \subset \R^2. 
\]
Periodicity with respect to the lattice defined via $\LambdaAst$ is of course defined analogously. 
To generalize the notion of periodicity, we further define for arbitrary $\talpha \in \R^2$ that $\phi: \, \R^2 \to\C^d$ is $\talpha$-quasiperiodic with respect to $\Lambda$ if
\begin{equation}
  \label{eq:LQuasiperiodicity}
  \phi(\x + \Lambda \j) = e^{\i \talpha\cdot \Lambda \j} \, \phi(\x), 
  \quad 
  \x\in \R^2.
\end{equation}
A $\Lambda$-periodic function becomes hence $\talpha$-quasiperiodic by multiplication with $\x \mapsto \exp(\i \talpha \cdot \x)$.
In case that $\talpha=\LambdaAst \bell$ for some $\bell\in\Z^2$, one easily computes that $\talpha$-quasiperiodicity  boils down to periodicity.
Thus, $\talpha$-quasiperiodicity is the same as $(\talpha+\LambdaAst \bell)$-quasiperiodicity and it is sufficient to consider quasiperiodicities in $\Wast$. 

The fundamental tool of our analysis is the two-dimensional Bloch transform $\J_{\R^2}$, that we define for $\phi \in C^\infty_0(\R^2)$ by  
\begin{equation}
  \label{eq:BlochZeta}
  \J_{\R^2} \phi(\talpha, \tx) 
  := \frac{|\det \Lambda|}{2\pi}^{1/2} \sum_{\j\in \Z^2} \phi(\tx + \Lambda \j) \, e^{-\i \talpha \cdot \Lambda \j},
  \quad  \talpha \in \R^2, \, \tx \in \R^2. 
\end{equation}
This transform is well-defined as $\phi$ is continuous and has compact support. 
We can actually restrict the second argument $\tx $ of $\J_{\R^2} \phi$ to $\W$ since the right-hand side of~\eqref{eq:BlochZeta} to $\R^2$ defines an $\talpha$-quasiperiodic function in $\tx$ with respect to $\Lambda$, 
\begin{multline*}
  \J_{\R^2} \phi (\talpha, \tx +\Lambda \j')
  = \frac{|\det \Lambda|}{2\pi}^{1/2} \sum_{\j\in \Z^2} \phi(\tx + \Lambda (\j+\j')) \, e^{-\i \talpha \cdot \Lambda \j}\\
  = \frac{|\det \Lambda|}{2\pi}^{1/2} \sum_{\j\in \Z^2} \phi(\tx + \Lambda \j) \, e^{-\i \talpha \cdot \Lambda (\j-\j')}
  = e^{\i \talpha \cdot \Lambda \j'} \,  \J_{\R^2} \phi(\talpha, \tx), 
  \quad  \tx \in \R^2, \, \j' \in \Z^2,
\end{multline*}
such that the knowledge of $\tx \mapsto \J_{\R^2} \phi(\talpha, \tx)$ in $\W$ defines that function everywhere in $\R^2$. 
For fixed $\tx $, the right-hand side in~\eqref{eq:BlochZeta} moreover is a Fourier series in $\talpha$, such that $\talpha \mapsto \J_{\R^2} \phi(\talpha, \tx)$ is $\LambdaAst$-periodic. 
Thus, we can as well restrict the quasiperiodicity $\talpha$, i.e., the first argument of $\J_{\R^2} \phi$, to $\Wast$ without loosing information.  
Let us further note that $\J_{\R^2}$ commutes with $\Lambda$-periodic functions: If $q: \, \R^2 \to \C$ is $\Lambda$-periodic, then 
\begin{equation}\label{eq:JComPerio}
  [\J_{\R^2} (q\phi)](\talpha, \tx) 
  = \frac{|\det \Lambda|}{2\pi}^{1/2} \sum_{\j\in\Z^2} q(\tx + \Lambda \j) \phi(\tx + \Lambda \j) e^{-\i\talpha\cdot \Lambda \j}
  = q(\tx) \, \J_{\R^2} \phi (\talpha, \tx).
\end{equation}

\begin{remark}\label{rem:1}
As we do not exploit specific two-dimensional features, all established results easily extend to higher dimensions.
More precisely, in arbitrary dimension $d \geq 1$ the only required modification is to change the normalizing factor of the Bloch transform in~\eqref{eq:BlochZeta} to $|\det\Lambda|^{1/2}/ (2\pi)^{d/2}$.
We omit this straightforward generalization to simplify notation. 
\end{remark} 
%

\section{Adapted function spaces for the Bloch transform}\label{se:functionSpaces}
To analyze the Bloch transform, we need to introduce the usual weighted Sobolev spaces as well as spaces of (quasi-)periodic functions, see, e.g.,~\cite{Lions1972, Bergh1976, McLea2000}. 
To this end, a convenient tool is the Fourier transform, 
\begin{equation}
  \label{eq:fourierTrafo}
  \dhat{\phi}(\z) := \frac{1}{2\pi} \int_{\R^2} e^{-\i \, \z \cdot \tx} \phi(\tx) \d{\tx} 
  \quad \text{for } \phi \in \C^\infty_0(\R^2) \text{ and } \z \in \R^2. 
\end{equation}
This transform extends to an isometry on $L^2(\R^2)$ and defines Bessel potential spaces via  
\[
  \HH^s(\R^2) := 
  \bigg\{ \phi \in \mathcal{D}'(\R^2): \, \int_{\R^2} (1+|\z|^2)^s |\dhat{\phi}(\z)|^2 \d{\z} <\infty \bigg\},
  \quad s \in \R,
\]
such that the Fourier transform of functions in $\HH^s(\R^2)$ times the weight $\z \mapsto (1+|\z|^2)^{s/2}$ belongs to $L^2(\R^2)$. 
Equipped with the norm $\| \phi \|_{\HH^s(\R^2)} = ( \int_{\R^2} (1+|\z|^2)^2 |\dhat{\phi}(\z)|^2 \d{\z} )^{1/2}$, the space $\HH^s(\R^2)$ is a Hilbert space. 
The corresponding weighted spaces are
\[
  \HH^s_r(\R^2) := \left\{ \phi \in \mathcal{D}'(\R^2): \, \tx \mapsto (1+|\tx|^2)^{r/2} \phi(\tx) \in \HH^s(\R^2) \right\},
  \quad s,r \in \R, 
\]
equipped with the norm $\| \phi \|_{\HH^s_r(\R^2)} = \| \tx \mapsto (1+|\tx|^2)^{r/2} \phi(\tx)  \|_{\HH^s(\R^2)}$.
(By the Leibniz formula, see~\eqref{eq:leibniz}, an equivalent norm in $\HH^s_r(\R^2)$ is $\| \phi \|_{\HH^s_r(\R^2)} = \| \z \mapsto (1+|\z|^2)^{s/2} \dhat{\phi}(\z)  \|_{\HH^r(\R^2)}$.)
If $s=m \in \N_0$ is an integer, then the latter norm is equivalent to a sum of weighted $L^2$-norms of weak derivatives, 
\begin{equation} \label{eq:HsmNorm}
  \| \phi \|_{\HH^s_m(\R^2)}
  \simeq 
  \Bigg[ \sum\limits_{\bm\gamma\in\N_0^2, |\bm\gamma|\leq m} \big\| \tx \mapsto \partial^{\bm\gamma} \big[ (1+|\tx|^2)^{r/2} \phi(\tx)\big] \big\|_{L^2(\R^2)}^2 \Bigg]^{1/2} . 
\end{equation}

We further recall the usual Sobolev spaces $\HH^s(\W)$ and $\HH^s(\Wast)$ on the bounded Lipschitz domains, defined, e.g., by interpolation and a duality argument via the spaces $\HH^m(\W)$ and $\HH^m(\Wast)$ for $m\in\N$, see~\cite{McLea2000}. 
To define corresponding spaces of periodic functions, we note that the smooth, $\Lambda$-periodic functions 
\begin{equation}\label{eq:LPerF}
  \phi_\Lambda^{(\j)}(\tx) := \frac{1}{| \det \Lambda|^{1/2}} e^{\i \LambdaAst \j \cdot \tx}, \quad \j\in\Z^2,
\end{equation}
form a complete orthonormal system in $L^2(\W)$. 
For any $\talpha \in \Wast$, the space $\mathcal{D}_{\talpha}'(\R^2)$ of $\talpha$-quasiperiodic distributions with respect to $\Lambda$ contains the products of all periodic distributions, see~\cite{Saran2002}, with $\tx \mapsto \exp(\i \talpha \cdot \tx)$. 
For such distributions and $\j \in \Z^2$, we define Fourier coefficients  
\begin{equation}
  \label{eq:fourierCoeff}
  \hat{\phi}(\j) := \big\langle\phi, \, \tx \mapsto \overline{\exp(\i \talpha \cdot \tx) \phi_\Lambda^{(\j)}(\tx)}\big \rangle_\Lambda
  \, \left[ =  \int_{\W} \phi(\tx) e^{-\i \talpha \cdot \tx} \overline{\phi_\Lambda^{(\j)}(\tx)} \d{\tx}
  \ \text{if } \phi \in L^2(\W) \right] 
\end{equation}
and introduce the subspace $\HH^s_\talpha(\W)$ of $\mathcal{D}'_\talpha(\R^2)$ containing all $\talpha$-quasiperiodic distributions with finite norm 
\begin{equation}
  \label{eq:HsDef}
  \| \phi \|_{\HH^s_\talpha(\W)} := \Bigg[ \sum_{\j \in \Z} (1+|\j|^2)^s \, |\hat{\phi}(\j)|^2 \Bigg]^{1/2} < \infty, \qquad s\in \R.
\end{equation}
When equipped with the norm from~\eqref{eq:HsDef}, this space becomes a Hilbert space with inner product $(\phi_1,\phi_2)_{\HH^s_\talpha(\W)} := \sum_{\j \in \Z} (1+|\j|^2)^s \, \hat{\phi}_1(\j) \overline{\hat{\phi}_2(\j)}$. 
It follows from basic Hilbert space theory that elements of $\HH^s_\talpha(\W)$ can be represented by their Fourier series, i.e., 
\[
  \phi(\tx) = \sum_{\j\in\Z^2} \hat{\phi}(\j) e^{\i \talpha \cdot \tx} \phi_\Lambda^{(\j)}(\tx) 
  = \frac{1}{| \det \Lambda|^{1/2}} \sum_{\j\in\Z^2} \hat{\phi}(\j) e^{\i (\LambdaAst \j+\talpha) \cdot \tx}   
  \quad \text{holds in $\HH^s_\talpha(\W)$.}
\]
This series representation in turn implies that for non-negative integers $s=m\in\N_0$, the space $\HH^s_\talpha(\W)$ consists of $m$ times weakly differentiable $\talpha$-quasiperiodic functions with weak derivatives in $L^2$,
\begin{equation}\label{eq:normEqual}
  \| \phi \|_{\WW^{2,m}_\talpha(\W)}
  := \Bigg[ \sum_{\bm\gamma\in\N_0^2, |\bm\gamma|\leq m} \big\| \partial^{\bm\gamma} \phi \big\|_{L^2(\W)}^2 \Bigg]^{1/2}
  \simeq 
  \| \phi \|_{\HH^m_\talpha(\W)}
\end{equation}

\begin{remark}
Fourier coefficients $\hat\phi(\ell)$ of $\LambdaAst$-periodic functions are defined as in~\eqref{eq:fourierCoeff} via the complete orthonormal system $\{ \phi_\LambdaAst^{(\ell)}: \, \ell\in\Z^2 \} \subset L^2(\Wast)$, where $\psi_\LambdaAst^{(\ell)}(\talpha) := | \det \LambdaAst|^{-1/2} \exp(\i  \, \Lambda \ell \cdot \talpha)$ for $\ell\in\Z^2$.  
We define corresponding spaces $\HH^s_{\bm{0}}(\Wast)$ as in~\eqref{eq:HsDef}. 
\end{remark} 

We have already noted above that the Bloch transform $\J_{\R^2} \phi$ extends to a $\LambdaAst$-periodic function in $\talpha$ and to a quasiperiodic function in $\tx $ with quasiperiodicity $\talpha$. 
It is natural that we require adapted function spaces in $(\talpha,\tx)$.  
To this end, we introduce the vector space $\mathcal{D}_{\Lambda}'(\R^2\times\R^2)$ of distributions in $\mathcal{D}'(\R^2 \times \R^2)$ that are $\LambdaAst$-periodic in their first and quasiperiodic with respect to $\Lambda$ in their second variable, with quasiperiodicity equal to the first variable.
The set of corresponding test functions is denoted as $C^\infty_\Lambda(\R^2 \times \R^2)$. 
Any $\psi \in \mathcal{D}_{\Lambda}'(\R^2\times\R^2)$ then possesses two Fourier series representations
\begin{align*}
  \psi(\talpha,\tx) 
  & = \sum_{\j\in\Z^2}  \hat\psi_\Lambda(\talpha,\j)  e^{\i\talpha\cdot \tx} \phi_{\Lambda}^{(\j)}(\tx)
  = \frac{1}{| \det \Lambda|^{1/2}} \sum_{\j\in\Z^2} \hat\psi_\Lambda(\talpha,\j) e^{\i (\LambdaAst \j+\talpha) \cdot \tx} \\
  & = \sum_{\bell\in\Z^2}  \hat\psi_\LambdaAst(\bell,\tx) \phi_\LambdaAst^{(\bell)} (\talpha)
  = \frac{1}{| \det \LambdaAst|^{1/2}} \sum_{\bell\in\Z^2} \hat\psi_\LambdaAst(\bell,\tx) e^{\i  \, \Lambda \bell \cdot \talpha} 
\end{align*}
with Fourier coefficients $\hat\psi_\Lambda(\talpha,\j) =\langle\psi(\talpha,\cdot),\phi_{\Lambda}^{(\j)} \rangle_\Lambda$ and $\hat\psi_\LambdaAst(\bell,\tx) = \langle\psi(\cdot,\tx),\phi_{\LambdaAst}^{(\bell)}\rangle_\LambdaAst$. 
For $s,r \in \R$, these coefficients yield Sobolev spaces $L^2(\Wast; \HH^s_\talpha(\W))$ and $\HH_\0^r(\Wast; L^2(\W))$ of distributions in $\D_\Lambda'(\R^2 \times \R^2)$ with finite norms 
\begin{align} \label{eq:HrHsPre1}
  \| \psi \|_{L^2(\Wast; \HH^s_\talpha(\W))} 
  &:= \bigg[ \sum_{\j\in\Z^2} (1+|\j|^2)^s \int_\Wast  | \hat\psi_\Lambda(\talpha,\j) |^2 \d{\talpha}\bigg]^{1/2} < \infty \qquad \text{and} \\
  \| \psi \|_{\HH_\0^r(\Wast; L^2(\W))}
  &:= \bigg[ \sum_{\bell\in\Z} (1+|\bell|^2)^r \int_\W | \hat\psi_\LambdaAst(\bell,\tx) |^2 \d{\tx} \bigg]^{1/2} < \infty, \label{eq:HrHsPre2}   
\end{align}
respectively. 
(The bold-face index $\0$ indicates periodicity and has nothing to do with boundary conditions!)
If $r=\ell \in \N$ is an integer, then it is well-known from Fourier series theory that the latter space contains all $\LambdaAst$-periodic distributions with values in $L^2(\W)$ that possess weak derivatives with respect to $\talpha$ belonging to $L^2(\Wast)$ up to order $\ell$. 
Moreover,
\begin{equation} \label{eq:equalNorm2}
\begin{split}
  \| \psi \|_{\WW^{2,\ell}_\0(\Wast; L^2(\W))}^2  
  & :=  \sum_{\bm\gamma\in\N_0^2, |\bm\gamma|\leq \ell} \int_\Wast \int_\W \left|\partial^{\bm\gamma}_\talpha \psi(\talpha, \tx) \right|^2 \d{\tx} \d{\talpha} 
  \simeq \| \psi \|_{\HH^\ell_\0(\Wast; L^2(\W))}^2
\end{split}
\end{equation}
are equivalent norms on $\HH^\ell_\0(\Wast;L^2(\W))$. 
Replacing $L^2(\W)$ by $\HH^s_\talpha(\W)$, we define $\WW^{2,\ell}_\0(\Wast; \HH^s_\talpha(\W))$ as space of all elements in $\mathcal{D}'_\Lambda(\R^2 \times \R^2)$ with finite norm 
\begin{equation} \label{eq:equalNorm3}
\begin{split}
  \| \psi \|_{\WW^{2,\ell}_\0(\Wast; \HH^s_\talpha(\W))}
  & := \Bigg[ \sum_{\bm\gamma\in\N_0^2, |\bm\gamma|\leq \ell} \int_\Wast \| \partial^{\bm\gamma}_\talpha \psi(\talpha, \cdot) \|_{\HH^s_\talpha(\W)}^2 \d{\talpha} \Bigg]^{1/2} < \infty.
\end{split}
\end{equation}
For $s=m\in\N$ an integer, the norm equivalence in~\eqref{eq:normEqual} shows that 
\begin{align*} \label{eq:HrHs}
  \| \psi \|_{\WW^{2,\ell}_\0(\Wast; \WW^{2,m}_\talpha(\W))}^2
  & := \sum_{\bm\gamma\in\N_0^2, |\bm\gamma|\leq \ell} \sum_{\bm\eta\in\N_0^2, |\bm\eta|\leq m} \int_\Wast \int_\W \left| \partial^{\bm\gamma}_\talpha \partial^{\bm\eta}_\tx \psi(\talpha, \tx) \right|^2 \d{\tx} \d{\talpha}\\
  & \simeq  \sum_{\bm\gamma\in\N_0^2, |\bm\gamma|\leq \ell} \int_\Wast \| \partial^{\bm\gamma}_\talpha \psi(\talpha, \cdot) \|_{\HH^m_\talpha(\W)}^2 \d{\talpha} 
  = \| \psi \|_{\WW^{2,\ell}_\0(\Wast; \HH^m_\talpha(\W))}^2 
\end{align*}
yields an equivalent (squared) norm in $\WW^{2,\ell}_\0(\Wast; \HH^m_\talpha(\W))$. 
Interpolation of the periodic function spaces $\WW^\ell_\0(\Wast; \HH^s_\talpha(\W))$ in the smoothness parameter $\ell$ then yields a family of periodic interpolation spaces included in $\mathcal{D}'_\Lambda(\R^2 \times \R^2)$ that we denote by 
\begin{equation}\label{eq:HrHs}
  \HH^{\ell+\theta}_\0(\Wast; \HH^s_\talpha(\W))
  := \left[ \HH^\ell_\0(\Wast; \HH^s_\talpha(\W)), \HH^{\ell+1}_\0(\Wast; \HH^s_\talpha(\W)) \right]_\theta 
  \quad \text{for $0<\theta<1$,} 
\end{equation}
see~\cite{Lions1972, Bergh1976}. 
The dual space of $\HH^r_\0(\Wast; \HH^s_\talpha(\W))$ with respect to the inner product 
\[
  (\psi_1,\psi_2) \mapsto 
  \int_\Wast \big(\psi_1(\talpha,\cdot),\psi_2(\talpha,\cdot)\big)_{\HH^s_\talpha(\W)} \d{\talpha}
\]
of $L^2(\Wast; \HH^s_\talpha(\W))$ is denoted by $\HH^{-r}_\0(\Wast; \HH^s_\talpha(\W))$, such that this class of spaces is well-defined for all $r,s \in \R$.
(We could have defined these spaces as well starting from $L^2(\Wast; \HH^s_\talpha(\W))$ from~\eqref{eq:HrHsPre1} instead of $\HH^s_\0(\Wast; L^2(\W))$ from~\eqref{eq:HrHsPre2}.)

\begin{remark} \label{rem:X}
(a) If $s=r=0$, all spaces introduced so-far reduce to $L^2(\W)$, $L^2(\Wast)$ or $L^2(\Wast; L^2(\W))$ due to Parseval's or Plancherel's equality.

(b) The construction of $\HH^r_\0(\Wast; \HH^s_\talpha(\W))$ generalizes to arbitrary families of Hilbert spaces $X_\talpha$ that contain distributions that are $\talpha$-quasiperiodic in the first two variables $\tx$. 
For $r \in \R$, the squared norm of an element $\psi$ in $\HH^m_\0(\Wast; X_\talpha)$ then equals 
\begin{equation} \label{eq:HrX}
  \| \psi \|_{\HH^r_\0(\Wast; X_\talpha)}^2  
  := \sum_{\bell\in\Z^2} (1+|\ell|^2)^r \big\| \x \mapsto \big\langle \psi(\cdot,x), \phi_\LambdaAst^{(\ell)} \big\rangle_\LambdaAst \big\|^2_{X_\talpha}.
\end{equation}
We later on use the resulting spaces for quasiperiodic Sobolev spaces $X_\talpha = \HH^s_\talpha(\Omega_H^\Lambda)$ on a three-dimensional domain $\Omega_H^\Lambda$, see~\eqref{eq:HmAlpha}. 
\end{remark}

\section{Properties of the Bloch transform on $\R^2$}\label{se:propBloch}
\begin{theorem}\label{th:BlochR}
(a) The Bloch transform $\J_{\R^2}$ extends from $C^\infty_0(\R^2)$ to an isometric isomorphism between $L^2(\R^2)$ and $L^2(\Wast; L^2(\W))$ with inverse
\begin{equation}  \label{eq:X1}
    \J_{\R^2}^{-1} \tilde{\phi} (\tx + \Lambda \j)
    = \frac{|\det \Lambda|}{2\pi}^{1/2} \int_{\Wast} \tilde{\phi}(\talpha, \tx) e^{\i \talpha \cdot \Lambda\j} \d{\talpha}, 
    \quad \tx \in \W, \, \j \in \Z^2.  
\end{equation}
(b) For $s$ and $r \in \R$, the Bloch transform $\J_{\R^2}$ extends from $C^\infty_0(\R^2)$ to an isomorphism between $\HH^s_r(\R^2)$ and $\HH^r_\0(\Wast; \HH^s_\talpha(\W))$. 
Its inverse transform is given by~\eqref{eq:X1} with equality in the sense of the norm of $\HH^s_r(\R^2)$. 
\end{theorem}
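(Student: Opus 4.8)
The plan is to prove part (a) first by a direct Fourier-analytic computation on the dense subspace $C^\infty_0(\R^2)$, and then to bootstrap part (b) from (a) together with the Fourier-side characterizations of all the spaces involved. For (a), fix $\phi\in C^\infty_0(\R^2)$ and compute the $L^2(\Wast;L^2(\W))$-norm of $\J_{\R^2}\phi$ directly. Expanding $\tx\mapsto\J_{\R^2}\phi(\talpha,\tx)$ in the orthonormal basis $\{e^{\i\talpha\cdot\tx}\phi_\Lambda^{(\j)}\}$ of the quasiperiodic space, or equivalently integrating $|\J_{\R^2}\phi(\talpha,\tx)|^2$ over $\W$ and then over $\Wast$, one uses the orthogonality relation $\int_{\Wast}e^{\i\talpha\cdot\Lambda(\j-\j')}\d{\talpha}=|\det\LambdaAst|^{-1}|\det\Lambda|^{-1}\cdot(2\pi)^2\,\delta_{\j,\j'}$ — more cleanly, $\int_\Wast e^{\i\talpha\cdot\Lambda\k}\d\talpha = |\Wast|\,\delta_{\k,\0} = (2\pi)^2|\det\Lambda|^{-1}\delta_{\k,\0}$ — to collapse the double sum over $\j,\j'$ to the diagonal. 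The normalizing constant $|\det\Lambda|^{1/2}/(2\pi)$ in~\eqref{eq:BlochZeta} is chosen exactly so that this yields $\|\J_{\R^2}\phi\|_{L^2(\Wast;L^2(\W))}^2=\sum_{\j}\int_\W|\phi(\tx+\Lambda\j)|^2\d\tx=\|\phi\|_{L^2(\R^2)}^2$, using that $\W$ tiles $\R^2$ under the lattice. This gives the isometry; density of $C^\infty_0(\R^2)$ in $L^2(\R^2)$ extends it to all of $L^2(\R^2)$. For surjectivity and the inversion formula~\eqref{eq:X1}, one checks that the operator defined by the right-hand side of~\eqref{eq:X1} is a left and right inverse on smooth data by plugging the series for $\tilde\phi$ in and interchanging sum and integral, again using the same orthogonality relation; this identifies the range as all of $L^2(\Wast;L^2(\W))$.

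For part (b), the strategy is to reduce to (a) by inserting the weights on both sides and matching them through the Fourier transform. On the space side, recall from the discussion after~\eqref{eq:fourierTrafo} that $\|\phi\|_{\HH^s_r(\R^2)}\simeq\|\z\mapsto(1+|\z|^2)^{s/2}\dhat\phi(\z)\|_{\HH^r(\R^2)}$, i.e.\ $\HH^s_r(\R^2)$ is, via the Fourier transform, the space of functions whose product with $(1+|\z|^2)^{s/2}$ lies in $\HH^r(\R^2)$ (the $L^2$-based Bessel potential space of order $r$ in the variable $\z$). On the Bloch side, the key observation is that the Bloch transform in $\tx$ is nothing but the Fourier transform "fibered" over the dual lattice: writing a point of $\R^2$ (in the $\z$-variable) uniquely as $\LambdaAst\j+\talpha$ with $\j\in\Z^2$ and $\talpha\in\Wast$, one has the Poisson-summation-type identity
\[
  \widehat{\J_{\R^2}\phi}_\Lambda(\talpha,\j)\;=\;c\,\dhat\phi(\LambdaAst\j+\talpha),
\]
with $c$ an absolute constant, relating the Fourier coefficient in $\tx$ of $\J_{\R^2}\phi$ to the ordinary Fourier transform of $\phi$. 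This is a short computation from~\eqref{eq:BlochZeta} and~\eqref{eq:fourierCoeff}. Feeding this into the definitions~\eqref{eq:HsDef},~\eqref{eq:equalNorm3} and the interpolation/duality definition~\eqref{eq:HrHs}: the factor $(1+|\j|^2)^{s/2}$ appearing in the $\HH^s_\talpha(\W)$-norm must be compared with the correct Bessel weight $(1+|\LambdaAst\j+\talpha|^2)^{s/2}$ in $\z$; since $\talpha$ ranges over the bounded set $\Wast$, these two weights are equivalent uniformly in $\talpha$, which accounts for the $\simeq$ rather than $=$ and is why (b) asserts only an isomorphism. Similarly, the $\HH^r_\0(\Wast;\cdot)$-norm in the $\talpha$-variable, via the Fourier-coefficient characterization~\eqref{eq:HrX} on the one hand and the integer-order weak-derivative description~\eqref{eq:equalNorm2}–\eqref{eq:equalNorm3} on the other, matches (up to constants) the $\HH^r$-smoothness in $\z$ of the Fourier transform once one notes that differentiating $\J_{\R^2}\phi$ in $\talpha$ corresponds, on the Fourier side, to multiplication by $\i\Lambda\j$ in~\eqref{eq:BlochZeta} — i.e.\ to the "$\Lambda\tx$-multiplier" that generates $(1+|\tx|^2)^{r/2}$ weights up to equivalence. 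One proves the isomorphism first for $s=0$ and $r=\ell\in\N_0$ using the explicit weak-derivative norms~\eqref{eq:HsmNorm} and~\eqref{eq:equalNorm2}, then extends to all $r\in\R$ by interpolation (for $r=\ell+\theta$, using~\eqref{eq:HrHs}) and duality (for negative $r$, using the stated dual pairing), and finally to all $s\in\R$ by the same Fourier-weight comparison; part (a) is the anchor case $s=r=0$. The inversion formula~\eqref{eq:X1} then holds in the $\HH^s_r(\R^2)$-sense by density and boundedness.

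The main obstacle is the bookkeeping in matching the two families of weights — the "lattice-split" Bessel weight $(1+|\LambdaAst\j+\talpha|^2)$ that naturally arises from the Fourier transform versus the "decoupled" pair $(1+|\j|^2)$ (in $\HH^s_\talpha(\W)$) and the $\talpha$-derivative weight (in $\HH^r_\0(\Wast;\cdot)$) that the Bloch-side spaces are built from — and in particular showing the relevant equivalences hold \emph{uniformly in $\talpha\in\Wast$} and commute correctly with interpolation. For the $\tx$-smoothness index $s$ this uniform equivalence is elementary since $|\talpha|$ is bounded on $\Wast$. The more delicate point is the $\talpha$-smoothness index $r$: one must verify that $\talpha$-differentiation of $\J_{\R^2}\phi$ and multiplication of $\phi$ by the polynomial weight $(1+|\tx|^2)^{r/2}$ are intertwined by $\J_{\R^2}$ up to equivalence of norms, which for integer $r$ follows from~\eqref{eq:HsmNorm}, the Leibniz rule~\eqref{eq:leibniz}, and the identity $\partial_\talpha^{\bm\gamma}\J_{\R^2}\phi=\J_{\R^2}\big[(\text{polynomial in }\tx)\,\phi\big]$ read off from~\eqref{eq:BlochZeta}, and for general $r$ from interpolation and duality. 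None of the individual steps is deep, but assembling them so that the chain of $\simeq$'s survives interpolation in $\ell$ and duality in $r$ is where care is needed.
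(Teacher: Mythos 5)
Your proposal is correct and follows essentially the same route as the paper: part (a) is the standard Parseval/orthogonality computation (the paper organizes it by identifying the Fourier coefficients in $\tx$ of $\J_{\R^2}\phi$ with $\dhat{\phi}(\LambdaAst\j+\talpha)$ and then applying Plancherel, which is the ``equivalently'' variant you mention), and part (b) rests on exactly the same ingredients the paper uses -- the identity $\J_{\R^2}\phi(\talpha,\cdot)^{\widehat{\ }}(\j)=\dhat{\phi}(\LambdaAst\j+\talpha)$, the uniform equivalence of $(1+|\j|^2)^s$ and $(1+|\LambdaAst\j+\talpha|^2)^s$ over $\talpha\in\Wast$, the Leibniz formula for the weight $(1+|\z|^2)^{s/2}$, then interpolation in $r>0$ and duality for $r<0$. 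The only nitpick is that $\partial_\talpha^{\bm\gamma}\J_{\R^2}\phi$ equals $\J_{\R^2}$ of $\phi$ times a polynomial in $\Lambda\j$ rather than in $\tx+\Lambda\j$, so the intertwining with the weight $(1+|\tx|^2)^{r/2}$ needs the conjugation by $e^{-\i\talpha\cdot\tx}$ (or the boundedness of $\tx$ on $\W$) that you already implicitly invoke via the Leibniz correction.
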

\begin{proof}
(a)
Since the Bloch transform $\psi = \J_{\R^2} \phi$ of $\phi \in C^\infty_0(\R^2)$ is an $\talpha$-quasiperiodic function in $\tx$, we can develop $\tx \mapsto \exp(-\i\talpha \cdot \tx) \, (\J_{\R^2} \phi)(\talpha,\tx)$ for fixed $\talpha$ into a Fourier series with coefficients
\begin{align*}
  c(\talpha, \j) 
  & = \int_\W \J_{\R^2} \phi(\tx) e^{-\i\talpha \cdot \tx} \overline{\phi_\Lambda^{(\j)}(\tx)} \d{\tx} 
  = \frac{1}{2\pi} \int_\W \sum_{\bell\in \Z^2} \phi(\tx + \Lambda \bell) e^{-\i \talpha \cdot \Lambda \bell} e^{-\i \LambdaAst \j \cdot \tx} \d{\tx} \\
  & \stackrel{\tx +\Lambda \bell =\tilde{\y}}{=}  
     \frac{1}{2\pi} \int_{\R^2} \phi(\tilde{\y}) e^{-\i (\LambdaAst \j + \talpha) \cdot \tilde{\y}}  \d{\tilde{\y}} 
  = \dhat{\phi}\left(\LambdaAst \j + \talpha \right), \quad \j \in \Z^2, \, \talpha \in \R^2, 
\end{align*}
where we exploited that $\exp(-\i \LambdaAst \j \cdot (\tilde{\y} - \Lambda \bell)) = \exp(-\i \LambdaAst \j \cdot \tilde{\y})$.
In consequence, 
\begin{equation}\label{eq:blochHoppla}
  \J_{\R^2} \phi(\talpha, \tx) 
  = \sum_{\j \in \Z^2} \dhat{\phi}\left( \LambdaAst \j+\talpha \right) e^{\i\talpha \cdot \tx} \varphi_\Lambda^{(\j)}(\tx)
  = |\det \Lambda|^{-1/2} \sum_{\j \in \Z^2} \dhat{\phi}\left( \LambdaAst \j+\talpha \right) e^{\i (\LambdaAst\j +\talpha) \cdot \tx},
\end{equation}
and Parseval's and Plancherel's equality imply that 
\[
  \|  \J_{\R^2} \phi \|_{L^2(\Wast; L^2(\W))}^2 
   = \sum_{\j \in \Z^2} \int_\Wast \left| \dhat{\phi}\left( \LambdaAst \j+\talpha \right) \right|^2 \d{\talpha}
   = \int_{\R^2} \big| \dhat{\phi}\left( \tilde{\bm{\mu}} \right)\big|^2 \d{\tilde{\bm{\mu}}} 
   = \| \phi \|_{L^2(\R^2)}^2,
\]
such that $\J_{\R^2}$ extends from $C^\infty_0(\R^2)$ to an isometry from $L^2(\R^2)$ into $L^2(\Wast; L^2(\W))$.
Since the Fourier transform is moreover an isomorphism on $L^2(\R^2)$, the Bloch transform is an isomorphism, too.  
Due to~\eqref{eq:blochHoppla}, the Fourier inversion formula further implies for $\phi \in C^\infty_0(\R^2)$ and $\tx \in \R^2$ that  
\begin{align*}
  \J_{\R^2}^{-1} \big( \J_{\R^2} \phi \big) (\tx)
  & = \frac{1}{2\pi} \int_{\Wast} \sum_{\j \in \Z^2} 
    \dhat{\phi}\left( \LambdaAst \j + \talpha \right) e^{\i (\LambdaAst \j + \talpha) \cdot \tx} \d{\talpha}
  = \frac{1}{2\pi} \int_{\R^2} \dhat{\phi}(\tilde{\bm{\mu}}) e^{\i \tilde{\bm{\mu}} \cdot \tx} \d{\tilde{\bm{\mu}}}  
  = \phi(\tx). 
\end{align*}
A straightforward computation shows the same equality for $\J_{\R^2} \circ \J_{\R^2}^{-1}$.  

(b)
Since $\C^\infty_0(\R^2)$ is dense in $\HH^s_r(\R^2)$, it is again sufficient to work with a smooth function $\phi$ of compact support. 
We assume first that $r=m \in\N_0$ is an integer, and exploit~\eqref{eq:equalNorm2} and~\eqref{eq:blochHoppla} to compute that
\begin{align}
  \|  \J_{\R^2} \phi \|_{\HH^m_\0(\Wast; \HH^s_\talpha(\W))}^2 
  & = \sum_{|\bm\gamma| \leq m} \sum_{\j\in\Z^2} (1+|\j|^2)^s \int_\Wast \big| \partial^{\bm\gamma}_\talpha \dhat{\phi}\left( \LambdaAst \j + \talpha \right) \big|^2 \d{\talpha} \nonumber \\
  & \leq
    C_+(\Lambda) \sum_{|\bm\gamma| \leq m}  \sum_{\j\in\Z^2} \int_\Wast (1+|\LambdaAst \j + \talpha|^2)^s \big|  \partial^{\bm\gamma}_\talpha \dhat{\phi}\left( \LambdaAst \j + \talpha \right) \big|^2 \d{\talpha} \nonumber \\
  & = C_+(\Lambda) \sum_{|\bm\gamma| \leq m} \int_{\R^2} (1+|\z|^2)^s \big| \partial^{\bm\gamma}_{\z} \dhat{\phi}\left( \z \right) \big|^2 \d{\z}, \label{eq:aux406}
\end{align}
where $C_+(\Lambda) = \sup_{\talpha \in \Wast, \, \j\in\Z^2} [ (1+|\j|^2)/ (1+|\LambdaAst \j + \talpha|^2)]^s <\infty$.
Using the Leibniz formula 
\begin{equation}\label{eq:leibniz}
  \partial^{\bm\gamma} \left[ (1+|\z|^2)^{s/2} \dhat{\phi}\left( \z \right) \right]
  = \sum_{\N_0^3 \ni \bm\eta \leq \bm\gamma} {\bm\gamma\choose\bm\eta} \partial^{\bm\eta} (1+|\z|^2)^{s/2} \, \partial^{\bm\gamma-\bm\eta} \dhat{\phi}\left( \z \right)
\end{equation}
there holds for all $\bm\gamma \in \N_0^2$ with $|\bm\gamma| \leq m$ that the two squared norms 
\begin{equation} \label{eq:aux410}
  \int_{\R^2} (1+|\z|^2)^s \big| \partial^{\bm\gamma} \dhat{\phi}\left( \z \right)\big|^2 \d{\z}
  \simeq \int_{\R^2} \bigg| \partial^{\bm\gamma} \left[ (1+|\z|^2)^{s/2} \dhat{\phi}\left( \z \right) \right] \bigg|^2 \d{\z}
\end{equation}
are equivalent. 
We conclude that $\| \J_{\R^2} \phi \|_{\HH^s_\0(\W; \HH^m_\talpha(\Wast))} \leq C(\Lambda,m,s) \| \phi \|_{H^s_m(\R^2)}$. 
The reciprocal inequality follows from first replacing the inequality in~\eqref{eq:aux406} by an estimate from below, exchanging $C_+(\Lambda)$ with $C_-(\Lambda) = \inf_{\talpha \in \Wast, \, \j\in\Z^2} [ (1+|\j|^2)/(1+| \LambdaAst \j + \talpha |^2)]^s > 0$, and afterwards exploiting again~\eqref{eq:aux410}. 
Thus, $\J_{\R^2}$ is an isomorphism from $\HH^s_m(\R^2)$ into $\HH^m_\0(\Wast; \HH^s_\talpha(\W))$. 

For non-integer $r \in (0,\infty) \setminus \N_0$, the corresponding property for $\HH^s_r(\R^2)$ follows from interpolation theory applied to the Sobolev spaces $\HH^s_r(\R^2)$ and $\HH^r_\0(\Wast; \HH^s_\talpha(\W))$, see, e.g.,~\cite{McLea2000, Lions1972}. 
For negative $r<0$, we exploit first that $\HH^s_r(\R^2)$ and $\HH^s_{-r}(\R^2)$ are dual to each other with respect to the scalar product 
\[
  (\psi,\phi) \mapsto \int_{\R^2} (1+|\z|^2)^s \dhat{\psi}(\z) \overline{\dhat{\phi}(\z)} \d{\z}
  \quad \text{for $\psi,\phi \in \HH^s(\R^2)$ with $s\in\R$,} 
\]
and second the duality of $\HH^r_\0(\Wast; \HH^s_\talpha(\W))$ and $\HH^{-r}_\0(\Wast; \HH^s_\talpha(\W))$ with respect to the scalar product of $L^2(\Wast; \HH^s_\talpha(\W))$. 
Thus, a duality argument implies that $\J_{\R^2}$ is an isomorphism from $\HH^s_r(\R^2)$ into $\HH^r_\0(\Wast; \HH^s_\talpha(\W))$ for all $r\in \R$.  

Finally, the inversion formula for the Bloch transform extends from $C^\infty_0(\R^2)$ to all Sobolev spaces $\HH^s_r(\R^2)$ with $s,r \in \R$ by the density of $C^\infty_0(\R^2)$ in each of these spaces.
\end{proof}

The last proof shows in particular that the Bloch transform of $\phi \in \HH^s_r(\R^2)$ can equivalently be represented by the Fourier transform $\dhat{\phi}$ of $\phi$ as in~\eqref{eq:blochHoppla}.
Further, the adjoint of $\J_{\R^2}$ with respect to the scalar product of $L^2(\Wast; L^2(\W))$ equals its inverse:
For $\phi \in \HH^s_r(\R^2)$ and $\psi \in \HH^{-r}_\0(\Wast; \HH^{-s}_\talpha(\W))$, there holds by $\talpha$-quasiperiodicity and density of smooth functions with compact support in $\HH^s_r(\R^2)$ that 
\begin{align*} 
  \langle \J_{\R^2} \phi, \psi \rangle_{L^2(\Wast; L^2(\W))}
  & = \frac{|\det \Lambda|}{2\pi}^{1/2} \int_\Wast \int_\W \sum_{\j\in \Z^2} \phi(\tx + \Lambda \j) e^{-\i \talpha \cdot \Lambda \j}  \, \overline{\psi(\talpha, \tx)} \d{\tx} \d{\talpha} \\
  & = \frac{|\det \Lambda|}{2\pi}^{1/2} \sum_{\j\in \Z^2} \int_\W \phi(\tx + \Lambda \j) \int_\Wast \overline{\psi(\talpha, \tx) e^{\i\talpha \cdot \Lambda \j}}  \d{\talpha} \d{\tx} \\
  & = \frac{|\det \Lambda|}{2\pi}^{1/2} \int_{\R^2} \phi(\tilde{\y}) \int_\Wast \overline{\psi(\talpha,\tilde{\y}) e^{\i\talpha \cdot \Lambda \j}}  \d{\talpha} \d{\tilde{\y}} 
  = \langle \phi, \J_{\R^2}^{-1}\psi \rangle_{L^2(\R)}.
\end{align*}

\begin{corollary}
  \label{th:propertiesBlochR}
For $s,r \in \R$, the adjoint $\J_{\R^2}^\ast$ of the Bloch transform $\J_{\R^2}: \, \HH^s_r(\R^2) \to \HH^r_\0(\Wast; \HH^s_\talpha(\W))$ with respect to the scalar product of $L^2(\Wast; L^2(\W))$ equals $\J_{\R^2}^{-1}$.
%
\end{corollary}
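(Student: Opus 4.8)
The plan is to read the statement off almost directly from the identity displayed immediately before the corollary, once it is made precise what ``adjoint with respect to the scalar product of $L^2(\Wast;L^2(\W))$'' means on the Sobolev scale. Recall that the scalar product of $L^2(\Wast;L^2(\W))$ extends to the duality pairing between $\HH^r_\0(\Wast;\HH^s_\talpha(\W))$ and $\HH^{-r}_\0(\Wast;\HH^{-s}_\talpha(\W))$, and the scalar product of $L^2(\R^2)$ extends to the duality pairing between $\HH^s_r(\R^2)$ and $\HH^{-s}_{-r}(\R^2)$. With these identifications, the adjoint of the isomorphism $\J_{\R^2}\colon \HH^s_r(\R^2)\to\HH^r_\0(\Wast;\HH^s_\talpha(\W))$ from Theorem~\ref{th:BlochR}(b) is the unique bounded operator $\J_{\R^2}^\ast\colon \HH^{-r}_\0(\Wast;\HH^{-s}_\talpha(\W))\to\HH^{-s}_{-r}(\R^2)$ with $\langle\J_{\R^2}\phi,\psi\rangle_{L^2(\Wast;L^2(\W))}=\langle\phi,\J_{\R^2}^\ast\psi\rangle_{L^2(\R^2)}$ for all admissible $\phi,\psi$; and by Theorem~\ref{th:BlochR}(b) applied with $(-s,-r)$ in place of $(s,r)$, the candidate $\J_{\R^2}^{-1}$ does map $\HH^{-r}_\0(\Wast;\HH^{-s}_\talpha(\W))$ isomorphically onto $\HH^{-s}_{-r}(\R^2)$, so the assertion is at least well-posed.

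First I would dispose of the case $s=r=0$: by Theorem~\ref{th:BlochR}(a) the Bloch transform is a surjective isometry from $L^2(\R^2)$ onto $L^2(\Wast;L^2(\W))$, hence unitary, so its Hilbert-space adjoint equals its inverse. For general $s,r\in\R$ I would then invoke the computation preceding the corollary: for $\phi\in C^\infty_0(\R^2)$ and $\psi\in\HH^{-r}_\0(\Wast;\HH^{-s}_\talpha(\W))$, interchanging the sum over $\j$ with the integrals (legitimate since for compactly supported $\phi$ only finitely many translates $\phi(\cdot+\Lambda\j)$ are nonzero on $\W$) and substituting $\tilde\y=\tx+\Lambda\j$ yields $\langle\J_{\R^2}\phi,\psi\rangle_{L^2(\Wast;L^2(\W))}=\langle\phi,\J_{\R^2}^{-1}\psi\rangle_{L^2(\R^2)}$, with $\J_{\R^2}^{-1}\psi$ given by~\eqref{eq:X1}. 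Both sides of this identity are continuous in $\phi$ with respect to the $\HH^s_r(\R^2)$-norm — the left by Theorem~\ref{th:BlochR}(b) and boundedness of the duality pairing, the right by boundedness of the pairing $\HH^s_r(\R^2)\times\HH^{-s}_{-r}(\R^2)\to\C$ together with Theorem~\ref{th:BlochR}(b) applied with $(-s,-r)$ — so by density of $C^\infty_0(\R^2)$ in $\HH^s_r(\R^2)$ the identity persists for all $\phi\in\HH^s_r(\R^2)$, and uniqueness of the adjoint gives $\J_{\R^2}^\ast=\J_{\R^2}^{-1}$.

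There is essentially no hard analytic step; the only genuine content is bookkeeping, and the points I would be careful about are that the two scalar products really do extend to the claimed duality pairings (immediate from the Fourier and Fourier-series descriptions~\eqref{eq:HsDef},~\eqref{eq:HrHsPre1},~\eqref{eq:HrX} of the spaces and the definition of the negative-order spaces as $L^2$-duals) and that the interchange of summation and integration is valid. Alternatively, one can avoid the density argument altogether by computing $\langle\J_{\R^2}\phi,\psi\rangle_{L^2(\Wast;L^2(\W))}$ directly from the Fourier-series representation~\eqref{eq:blochHoppla} of $\J_{\R^2}\phi$ and the analogous expansion of $\psi$ in the orthonormal system $\{\phi_\Lambda^{(\j)}\}$, reducing both sides to $\sum_{\j\in\Z^2}\int_{\Wast}\dhat\phi(\LambdaAst\j+\talpha)\overline{\hat\psi_\Lambda(\talpha,\j)}\,\d{\talpha}$ and then recognizing this, via the unfolding $\int_{\R^2}=\sum_{\j\in\Z^2}\int_{\LambdaAst\j+\Wast}$ and Plancherel's theorem, as $\langle\phi,\J_{\R^2}^{-1}\psi\rangle_{L^2(\R^2)}$; I expect this Fourier-coefficient route to be the more robust one if one wants to keep every estimate at the level of coefficients.
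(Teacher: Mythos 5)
Your proposal is correct and follows essentially the same route as the paper: the paper proves the corollary by exactly the computation you describe — writing out $\langle\J_{\R^2}\phi,\psi\rangle_{L^2(\Wast;L^2(\W))}$ for compactly supported $\phi$, interchanging the (finite) sum over $\j$ with the integrals, using the $\talpha$-quasiperiodicity of $\psi$ to fold the phase into $\psi(\talpha,\tx+\Lambda\j)$, substituting $\tilde\y=\tx+\Lambda\j$ to recognize the inversion formula~\eqref{eq:X1}, and concluding by density of $C^\infty_0(\R^2)$ in $\HH^s_r(\R^2)$. Your additional remarks (the precise domain and codomain of the adjoint on the Sobolev scale, the unitary case $s=r=0$, and the optional Fourier-coefficient variant) are consistent elaborations rather than a different argument.
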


\section{The Bloch Transform on Periodic Surfaces}\label{se:blochSurf}
We define now an analogous partial Bloch transform on Sobolev spaces defined on periodic surfaces. 
To this end, we assume that ${\bm\zeta}: \, \R^3 \to \R^3$ is a $\Lambda$-periodic Lipschitz diffeomorphism, i.e., ${\bm\zeta}$ and its inverse ${\bm\zeta}^{-1}$ are Lipschitz continuous and $\tilde{\y} \mapsto {\bm\zeta}(\tilde{\y},\y_3)$ is $\Lambda$-periodic for all $\y_3\in \R$. 
The inverse mapping ${\bm\zeta}^{-1}$ to ${\bm\zeta}$ is hence Lipschitz continuous, $\Lambda$-periodic, and onto as well.  
This Lipschitz diffeomorphism defines the Lipschitz surface 
\[
  \Gamma := \big\{ {\bm\zeta}(\tilde{\y},0): \, \tilde{\y} \in \R^2 \big\}. 
\]
Suitably shifting ${\bm\zeta}$, we can without loss of generality assume that $\tx \mapsto {\bm\zeta}(\tx , 0)$ is bijective from the fundamental domain $\W \subset \R^2$ onto $\Gamma_\Lambda := \big\{ \x\in \Gamma: \, \tx \in \W \big\}$.
By shifting ${\bm\zeta}$ analogously in $\y_3$, we can further assume that there is $H_0>0$ such that 
\begin{equation}\label{eq:H0}
  0 < \mathrm{ess }\inf_{\tilde{\y} \in \R^2} {\bm\zeta}(\tilde{\y},0) \cdot \e^{(3)}  < H_0  
  \quad \text{for all $\tilde{\y} \in \R^2$, such that $\Gamma \subset \R^2 \times (0,H_0)$,} 
\end{equation}
and that ${\bm\zeta}(\x) =  \x$ for $\x_3 \geq H_0$. 
Following~\cite[Chapter 3]{McLea2000} we further introduce Sobolev spaces on the $\Lambda$-periodic surface $\Gamma$. 
For $\phi: \, \Gamma \to\C$ we define $\phi_{\bm\zeta}: \, \R^2 \to \C$ by 
\[
  \phi_{\bm\zeta}(\tilde{\y}) := \phi({\bm\zeta}(\tilde{\y},0)) 
  \quad \text{ for $\tilde{\y} \in \R^2$, i.e., } 
  \quad 
  \phi(\x) = \phi_{\bm\zeta}( \tilde{\bm\zeta}^{-1}(\x)) \quad \text{  for $x\in \Gamma$.} 
\]
(For simplicity, we write $\tilde{\bm\zeta}^{-1} = (\tilde{\bm\zeta}^{-1}_1,\tilde{\bm\zeta}^{-1}_2)^\top$ for the first two components of ${\bm\zeta}^{-1}$.)
Then, 
\begin{equation}
  \label{eq:liftHs}
  \HH^s_r(\Gamma) := \big\{ \phi: \, \Gamma \to \C \text{ such that }  
    \phi_{\bm\zeta} \in \HH^s_r(\R^2) \big\}, \quad 0 \leq s \leq 1, \, r \in \R, 
\end{equation}
with norm $\| \phi \|_{\HH^s_r(\Gamma)} := \| \phi_{\bm\zeta} \|_{\HH^s_r(\R^2)}$.   
For negative $r<0$ and $-1 \leq s<0$, the spaces $\HH^s_r(\Gamma)$ are defined by duality with respect to the scalar product 
\[
  (\phi, \psi)_{\Gamma} := \int_{\Gamma} \phi \, \ol{\psi} \dS  
  = \int_{\R^2} \phi_{\bm\zeta}(\tilde{\y}) \ol{\psi}_{{\bm\zeta}}(\tilde{\y}) \, | \partial_{\y_1} {\bm\zeta}(\tilde{\y}) \times \partial_{\y_2} {\bm\zeta}(\tilde{\y},0)| \d{\tilde{\y}} 
\]
of $L^2(\Gamma)$. 
The range of $s \in [-1,1]$ is limited as the surface is merely assumed to be Lipschitz smooth, i.e., ${\bm\zeta}$ possesses in general merely first-order essentially bounded weak derivatives.
(If ${\bm\zeta}$ belongs to $C^{m-1,1}(\R^2, \R^3)$ for $m\in\N$, it makes sense to analogously define $\HH^s_r(\Gamma)$ for $s\in [-m, m]$, see~\cite[Chapter 3]{McLea2000}.) 

We further introduce periodic Sobolev spaces $\HH^s_\talpha(\Gamma_\Lambda)$, for $-1 \leq s \leq 1$ by lifting $\HH^s_\talpha(\W)$ to $\Gamma_\Lambda$, 
\begin{equation}
  \label{eq:liftHsPeriodic}
  \HH^s_\talpha(\Gamma_\Lambda) := 
  \big\{ \phi : \, \Gamma \to \C \text{ such that } 
          \tilde{\y} \mapsto \phi_{\bm\zeta}(\tilde{\y}) \in \HH^s_\talpha(\W) \big\}. 
\end{equation}
Norm and scalar product in $\HH^s_\talpha(\Gamma_\Lambda)$ are defined via the same quantities in $\HH^s_\talpha(\W)$, e.g., $\| \phi \|_{\HH^s_\talpha(\Gamma_\Lambda)} := \| \phi_{\bm\zeta} \|_{\HH^s_\talpha(\W)}$. 
For $\Lambda$-periodic functions $\psi$ mapping $\R^2$ into $\HH^s_\talpha(\Gamma_\Lambda)$, we finally define $\HH^r_\0(\Wast; \HH^s_\talpha(\Gamma_\Lambda))$ as in the case of $\Gamma^0 \cong \R^2$, see~\eqref{eq:HrHs}. 

The Bloch transform $\J_{\Gamma}$ of $\phi \in \HH^s_r(\Gamma)$ with $s \in [-1,1]$ and $r\in \R$ then equals
\begin{equation}
  \label{eq:BlochGamma}
  \J_{\Gamma} \phi \big(\talpha, \x) 
  := \frac{|\det\Lambda|}{2\pi}^{1/2} \sum_{\j\in \Z^2} \phi \left( \begin{smallmatrix} \tx +\Lambda \j \\ \x_3 \end{smallmatrix} \right) e^{-\i \talpha \cdot \Lambda \j}, 
  \quad  \x = \left( \begin{smallmatrix} \tx \\ \x_3 \end{smallmatrix} \right) \in \Gamma, \, \talpha \in \R^2.
\end{equation}
As ${\bm\zeta}|_{\{ \y_3 = 0\}}$ is bijective from $\{ \y_3 = 0\}$ onto $\Gamma$ and as ${\bm\zeta}^{-1}|_{\Gamma}$ is the inverse of this bijection, there is for each $x\in\Gamma$ a unique $\tilde{\y} = \tilde{\bm\zeta}^{-1}(\x) \in \R^2$ such that ${\bm\zeta}(\tilde{\y},0) = \x$. 
$\Lambda$-periodicity of ${\bm\zeta}$ further implies that 
\[
  \left( \begin{smallmatrix} \tx +\Lambda \j \\ \x_3 \end{smallmatrix} \right)
  = {\bm\zeta} (\tilde{\y} + \Lambda,0)
  = {\bm\zeta} \left( {\bm\zeta}^{-1}\left( \begin{smallmatrix} \tx +\Lambda \j \\ \x_3 \end{smallmatrix}  \right) \right) 
  \quad \text{for all } \x\in\Gamma \text{ and } \j \in \Z^2,
\]
and we conclude that $\J_{\Gamma} \phi \big(\talpha, \x) = \J_{\R^2} \phi_{\bm\zeta} \big(\talpha, \tilde\rho(\x)\big)$. 
Thus, the mapping properties of $\J_{\R^2}$ directly carry over to $\J_{\Gamma}$, as $\HH^s_r(\Gamma)$ and $\HH^s_\talpha(\Gamma_\Lambda)$ are defined by lifting $\HH^s_r(\R^2)$ and $\HH^s_\talpha(\W)$ to $\Gamma$, respectively, see~\eqref{eq:liftHs} and~\eqref{eq:liftHsPeriodic}.
In particular, $\J_{\Gamma} \phi$ is as well the lifting of $\J_{\R^2} \phi_{\bm\zeta}$ to $\Gamma$.  
Thus, the next result is a corollary of Theorem~\ref{th:BlochR}. 

\begin{theorem}
  \label{th:Bloch}
  For $s \in [-1,1]$ and $r \in \R$, the Bloch transform $\J_{\Gamma}$ is an isomorphism between $\HH^s_r(\Gamma)$ and $\HH^r_\0(\Wast; \HH^s_\talpha(\Gamma_\Lambda))$ that is an isometry for $s=r=0$. 
  Its inverse equals its $L^2$-adjoint and is given by 
  \[
    \J_{\Gamma}^{-1} \psi \left(\x + \left( \begin{smallmatrix} \Lambda \j \\ 0 \end{smallmatrix} \right) \right)
    = \frac{|\det\Lambda|}{2\pi}^{1/2} 
    \int_\Wast \psi(\talpha, \x) e^{\i \talpha \cdot \Lambda \j}\d{\talpha}
    \quad \text{for }  \x \in \Gamma_\Lambda, \, \j\in\Z^2 .
  \]
\end{theorem}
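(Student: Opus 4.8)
The plan is to deduce Theorem~\ref{th:Bloch} from Theorem~\ref{th:BlochR} by transporting everything through the periodic Lipschitz diffeomorphism $\bm\zeta$. The key observation, already recorded in the discussion preceding the statement, is the pointwise identity $\J_\Gamma\phi(\talpha,\x) = \J_{\R^2}\phi_{\bm\zeta}(\talpha,\tilde\rho(\x))$, where $\tilde\rho = \tilde{\bm\zeta}^{-1}$; in other words, $\J_\Gamma\phi$ is nothing but the lifting to $\Gamma$ of $\J_{\R^2}\phi_{\bm\zeta}$ in the second variable, with the first (quasiperiodicity) variable untouched. I would first make this precise: for $\phi\in C^\infty_0(\Gamma)$ (or the appropriate dense class), $\phi\mapsto\phi_{\bm\zeta}$ is by the very definitions~\eqref{eq:liftHs} and~\eqref{eq:liftHsPeriodic} an isometric isomorphism $\HH^s_r(\Gamma)\to\HH^s_r(\R^2)$ and, for each fixed $\talpha$, an isometric isomorphism $\HH^s_\talpha(\Gamma_\Lambda)\to\HH^s_\talpha(\W)$. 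Composing with the definition~\eqref{eq:HrHs} of $\HH^r_\0(\Wast;\HH^s_\talpha(\Gamma_\Lambda))$, which only sees the $\talpha$-dependence through the $\HH^s_\talpha$-norm fibrewise, the lifting map $\psi\mapsto\psi_{\bm\zeta}$ (acting in the $\x$-variable only) is an isometric isomorphism $\HH^r_\0(\Wast;\HH^s_\talpha(\Gamma_\Lambda))\to\HH^r_\0(\Wast;\HH^s_\talpha(\W))$.

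With these two liftings in hand the proof is a diagram chase: $\J_\Gamma = (\text{lift to }\Gamma)\circ\J_{\R^2}\circ(\text{restrict via }\bm\zeta)$, i.e.\ $\J_\Gamma\phi = (\J_{\R^2}\phi_{\bm\zeta})_{\text{lifted}}$. Since the outer lifting is an isometric isomorphism $\HH^r_\0(\Wast;\HH^s_\talpha(\W))\to\HH^r_\0(\Wast;\HH^s_\talpha(\Gamma_\Lambda))$, the inner restriction is an isometric isomorphism $\HH^s_r(\Gamma)\to\HH^s_r(\R^2)$, and Theorem~\ref{th:BlochR}(b) says the middle map $\J_{\R^2}$ is an isomorphism $\HH^s_r(\R^2)\to\HH^r_\0(\Wast;\HH^s_\talpha(\W))$, the composition $\J_\Gamma$ is an isomorphism $\HH^s_r(\Gamma)\to\HH^r_\0(\Wast;\HH^s_\talpha(\Gamma_\Lambda))$. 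The isometry claim for $s=r=0$ follows because in that case all three constituent maps are isometries (Theorem~\ref{th:BlochR}(a) for the middle one, and the liftings are isometric by construction since the $L^2$ scalar products on $\Gamma$, $\Gamma_\Lambda$ are \emph{defined} by pulling back along $\bm\zeta$ — one should just double-check that the Jacobian factor $|\partial_{\y_1}\bm\zeta\times\partial_{\y_2}\bm\zeta|$ appears consistently in the definition of $(\cdot,\cdot)_\Gamma$ and in the fibrewise scalar product on $\Gamma_\Lambda$, which it does by~\eqref{eq:liftHs}--\eqref{eq:liftHsPeriodic}).

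For the inversion formula I would transport~\eqref{eq:X1} through the same lifting. Applying $\J_{\R^2}^{-1}$ from Theorem~\ref{th:BlochR}(a) to $\psi_{\bm\zeta}$ gives, for $\tx\in\W$ and $\j\in\Z^2$,
\[
  \J_{\R^2}^{-1}\psi_{\bm\zeta}(\tx+\Lambda\j)
  = \frac{|\det\Lambda|}{2\pi}^{1/2}\int_\Wast \psi_{\bm\zeta}(\talpha,\tx)\,e^{\i\talpha\cdot\Lambda\j}\d{\talpha},
\]
and then reading off that $\J_\Gamma^{-1}\psi$ is the lifting of $\J_{\R^2}^{-1}\psi_{\bm\zeta}$, together with $\Lambda$-periodicity of $\bm\zeta$ (so that $\x+(\Lambda\j,0)^\top$ corresponds under $\bm\zeta^{-1}$ to $(\tx+\Lambda\j,\x_3)^\top$ when $\x=\bm\zeta(\tx,0)\in\Gamma_\Lambda$), yields the stated formula. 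That $\J_\Gamma^{-1}$ equals the $L^2(\Wast;L^2(\Gamma_\Lambda))$-adjoint of $\J_\Gamma$ follows from Corollary~\ref{th:propertiesBlochR} and the fact that the lifting maps are unitary with respect to the relevant $L^2$ inner products, so adjoints are preserved under the conjugation $\J_\Gamma = (\text{lift})\circ\J_{\R^2}\circ(\text{lift})^{-1}$.

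I do not anticipate a genuine obstacle; the content is entirely in Theorem~\ref{th:BlochR} and the definitions. The only point requiring a little care is the bookkeeping of which variable the lifting acts on — the diffeomorphism $\bm\zeta$ touches only the spatial variable $\tx$ (equivalently $\x\in\Gamma$), never the quasiperiodicity $\talpha$ — and the verification that the weighted $L^2$-structures (with the surface Jacobian) match up so that the liftings are genuinely \emph{isometric}, not merely bounded isomorphisms; for $s,r\neq0$ boundedness of the Jacobian and its inverse (Lipschitz diffeomorphism) already suffices for the isomorphism statement, and one should note the restriction $s\in[-1,1]$ is inherited directly from the definition of $\HH^s_r(\Gamma)$.
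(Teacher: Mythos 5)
Your argument is correct and is essentially the paper's own proof: the paper likewise observes that $\J_\Gamma\phi$ is the lifting of $\J_{\R^2}\phi_{\bm\zeta}$ and that the spaces $\HH^s_r(\Gamma)$, $\HH^s_\talpha(\Gamma_\Lambda)$ are by definition pulled back along $\bm\zeta$, so that Theorem~\ref{th:BlochR} transports directly (the paper only writes out the $s=r=0$ isometry explicitly). Your additional care about the surface Jacobian is reasonable but moot here, since the lifted norms in~\eqref{eq:liftHs}--\eqref{eq:liftHsPeriodic} are defined without the Jacobian factor, which is exactly what makes the liftings isometric.
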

\begin{proof}
We merely show that $\J_{\Gamma}$ is an isometry between $\HH^0_\0(\Gamma) = L^2(\Gamma)$ and $\HH^0_\0(\Wast; \HH^0_\talpha(\Gamma_\Lambda)) = L^2(\Wast; L^2(\Gamma_\Lambda))$, which follows from the isometry property of $\J_{\R^2}$: 
For $\phi \in L^2(\Gamma)$, we exploit the definition of $\J_{\Gamma} \phi$ and that $\| \phi \|_{\HH^0_\0(\Gamma)} = \| \varphi_{\bm\zeta} \|_{L^2(\R^2)}$, to obtain that 
$\| \J_{\Gamma} \phi \|_{L^2(\Wast; L^2(\Gamma_\Lambda))}
  =  \| (\J_{\R^2} \phi_{\bm\zeta})(\cdot, \tilde{\bm\zeta}^{-1}(\cdot)) \|_{L^2(\Wast; L^2(\Gamma_\Lambda))} 
  =  \| \phi_{\bm\zeta} \|_{L^2(\R^2)}
  =  \| \phi \|_{L^2(\Gamma)}$.
\end{proof}

\section{The Bloch Transform on Periodic Strips}\label{se:blochDom}
To define a Bloch transform on periodic domains, let us use the diffeomorphism $\bm\zeta$ from the last section to define two domains above the periodic surface $\Gamma$ by
\[  
\Omega := \big\{ {\bm\zeta}(\y): \, \y\in\R^3, \, \y_3>0  \big\} 
  \quad \text{and} \quad 
  \Omega_H := \big\{ {\bm\zeta}(\y): \, \y\in\R^3, \, 0<\y_3, \, {\bm\zeta}(\y)\cdot \e^{(3)} < H \big\}    
\]
for $H \geq H_0$ (see~\eqref{eq:H0}.
Both domains are hence unbounded Lipschitz domains and $\Lambda$-periodic in $\tx$. 
Since $H \geq H_0$, the boundary of the truncated domain $\Omega_H$ moreover is the union of $\Gamma$ and $\Gamma_H = \{ \x_3 = H \}$. 
We also introduce, roughly speaking, the restriction of $\Omega$ and $\Omega_H$ to the fundamental domain of periodicity $\W$, 
\[
  \Omega_\Lambda := \big\{ \x\in\Omega: \, \tx \in \W \big\} 
  \quad \text{and} \quad 
  \Omega_H^\Lambda := \big\{ \x\in\Omega_H: \, \tx \in \W \big\}. 
\]
For smooth functions $u: \, \Omega_H \to \C^d$ with compact support we define the horizontal Bloch transform $\J_{\Omega}$ by 
\begin{equation}
  \label{eq:BlochOmega}
  \J_{\Omega} u(\talpha, \x) 
  := \frac{|\det\Lambda|}{2\pi}^{1/2} \sum_{\j\in \Z^2} u(\tx + \Lambda \j,\x_3) e^{-\i \talpha \cdot \Lambda \j},
  \quad  \x \in \Omega_H, \, \talpha \in \R^2, \, H \geq H_0.
\end{equation}
Obviously, restricting the partial transform $\J_{\Omega} u(\talpha,\cdot)$ to $\Gamma$ yields the Bloch transform $\J_{\Gamma}$ applied to the restriction $u|_{\Gamma}$ and $\J_{\Omega} u(\talpha, \x) = \J_{\R^2}\big( u(\cdot, \x_3) \big)(\talpha,\tx)$ holds for for all $\x \in \Omega$.
Again, mapping properties of $\J_\Omega$ rely on suitable function spaces. 
For $m\in\N_0$, 
\begin{equation}
  \HH^m(\Omega_H) := \left\{ u \in L^2(\Omega_H): \, \partial^{\bm\gamma} u \in L^2(\Omega_H) \text{ for } \bm\gamma \in \N_0^3, \, |\bm\gamma|\leq m \right\} 
\end{equation}
and note that the weighted analogues of these Sobolev spaces for polynomial weights are   
\begin{equation} \label{eq:HMR}
  \HH^m_r(\Omega_H) := \left\{ u \in L^2(\Omega_H): \, (1+|\tx|^2)^{r/2} u(\x) \in \HH^m(\Omega_H) \right\}\quad r\in\R, m\in\N_0.
\end{equation}
These weighted spaces are normed by $\| u \|_{\HH^m_r(\Omega_H)} =  \|  \x \mapsto (1+|\tx|^2)^{r/2} \, u(\x) \big\|_{\HH^m(\Omega_H)}^{1/2}$, where
$\| u \|_{\HH^m(\Omega_H)}^2 = \sum_{\bm\gamma \in \N_0^3, \, |\bm\gamma|\leq m} \| \partial^{\bm\gamma} u \|^2_{L^2(\Omega_H)}$ is, for $m\in\N_0$, the squared norm of $\HH^m(\Omega_H)$.   
The Leibniz formula, see~\eqref{eq:leibniz}, implies for $s=m\in\N_0$ that 
\begin{equation} \label{eq:equivNormHOmega}
 u\mapsto 
 \sum_{\bm\gamma \in \N_0^3, \, |\bm\gamma|\leq m} \|  \x \mapsto (1+|\tx|^2)^{r/2}  [\partial^{\bm\gamma} u(\x)] \|^2_{L^2(\Omega_H)}
 = \sum_{\bm\gamma \in \N_0^3, \, |\bm\gamma|\leq m} \| \partial^{\bm\gamma} u \|^2_{L^2_r(\Omega_H)} 
\end{equation}
defines an equivalent (squared) norm on $\HH^m_r(\Omega_H)$. 
Thus, $u\in \HH^m_r(\Omega_H)$ possesses weak derivatives in $L^2_r(\Omega_H)$ up to order $m\in\N$. 

For arbitrary $s>0$, $\HH^s_r(\Omega_H)$ is defined by interpolation, see~\cite{McLea2000}. 
If we denote the closure of $C^\infty_0(\Omega_H)$ in the norm of $\HH^s_r(\Omega_H)$ by $\tilde\HH^s_r(\Omega_H)$, then the dual space of $\HH^s_r(\Omega_H)$ for $s \geq 0$ with respect to the scalar product 
\[
  (u,v) \mapsto \int_{\Omega_H} (1+|\tx|^2)^r u \overline{v} \d{\x}
  \quad \text{in } L^2_r(\Omega_H) 
\]
is denoted by $\tilde\HH^{-s}_r(\Omega_H)$. 
The corresponding dual of $\tilde\HH^s_r(\Omega_H)$ is denoted by $\HH^{-s}_r(\Omega_H)$.
(Note that $\HH^0_0(\Omega_H) = L^2(\Omega_H)$.)

To state mapping properties of $\J_{\Omega}$, we introduce for $m\in\N_0$ the periodic Sobolev spaces $\HH^m_\talpha(\Omega_H^\Lambda)$ of functions $u$ in $\Omega_H$ that are $\talpha$-quasiperiodic in $\tx$ with respect to $\Lambda$, such that  
\begin{equation} \label{eq:HmAlpha}
  \| u \|_{\HH^m_\talpha(\Omega_H^\Lambda)} 
  := \Bigg[ \sum_{\bm\gamma \in \N^3_0, |\bm\gamma|\leq m} \| \partial^{\bm\gamma} u \|^2_{L^2(\Omega_H^\Lambda)} \bigg]^{1/2} < \infty 
\end{equation}
is finite. 
For arbitrary $s>0$, the intermediate spaces $\HH^s_\talpha(\Omega_H^\Lambda)$ are defined by interpolation and the closure of smooth $\talpha$-quasiperiodic functions in $\Omega_H$ with compact support in $\Omega_H$ in the norm of $\HH^s_\talpha(\Omega_H^\Lambda)$ is $\tilde\HH^s_\talpha(\Omega_H^\Lambda)$. 
The dual spaces of $\HH^s_\talpha(\Omega_H^\Lambda)$ and $\tilde\HH^s_\talpha(\Omega_H^\Lambda)$ for the scalar product of $L^2(\Omega_H^\Lambda) =\HH^0_\talpha(\Omega_H^\Lambda)$ are $\tilde\HH^{-s}_\talpha(\Omega_H^\Lambda)$ and $\HH^{-s}_\talpha(\Omega_H^\Lambda)$, respectively. 
Finally, the product spaces $\HH^r_\0(\Wast; \HH^s_\talpha(\Omega_H^\Lambda))$ and $\HH^r_\0(\Wast; \tilde\HH^s_\talpha(\Omega_H^\Lambda))$ are defined as explained in Remark~\ref{rem:X}(b).


\begin{lemma}\label{th:BlochPartDeri}
  If $u \in \HH^m_r(\Omega_H)$ for $m\in\N$, then $\J_{\Omega} u$ possesses weak partial derivatives with respect to $\x \in \Omega_H^\Lambda$ in $L^2_r(\Omega_H^\Lambda)$ up to order $m\in\N$. 
  If $\bm\gamma \in \N_0^3$ with $|\bm\gamma| \leq m$, then 
  \begin{equation}\label{eq:transDeri}
    \partial_\x^{\bm\gamma} \J_{\Omega} u(\talpha, \x)
    =\J_{\Omega} \left[\partial^{\bm\gamma} u \right](\talpha, \x)
    \quad \text{in } L^2_r(\Omega_H^\Lambda). 
  \end{equation}
  If $\x \mapsto \tx^{\tilde{\bm\gamma}} u(\x) \in L^2(\Omega_H^\Lambda)$, then 
  \begin{equation}\label{eq:aux639} 
    \sum_{\tilde{\bm\mu} \in\N_0^2, \, \tilde{\bm\mu} \leq \tilde{\bm\gamma}} 
    {\tilde{\bm\gamma} \choose \tilde{\bm\mu}} \,  \big( -\i \tx \big)^{\tilde{\bm\gamma} - \tilde{\bm\mu}} \, \partial^{\tilde{\bm\mu}}_\talpha \J_{\Omega} u(\talpha, \x) 
   = \J_{\Omega} \big[ \y \mapsto (-\i\tilde\y)^{\tilde{\bm\gamma}}  u \big](\talpha, \x)
    \quad \text{in } L^2(\Omega_H^\Lambda).     
  \end{equation}
(As usual, $\tilde{\bm\beta}^{\tilde{\bm\gamma}} := \bm\beta_1^{\bm\gamma_1} \, \bm\beta_2^{\bm\gamma_2}$ for $\tilde{\bm\beta} \in \C^2$ and $\tilde{\bm\gamma} \in \N_0^2$.)
\end{lemma}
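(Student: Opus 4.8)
The strategy is to reduce everything to the one-variable case already available: namely, the Bloch transform $\J_\Omega$ acts in the horizontal variables only, so it commutes with differentiation in $\x_3$ trivially, and its action in $\tilde\x$ is governed by the properties of $\J_{\R^2}$ established in Theorem~\ref{th:BlochR}. Concretely, for smooth $u$ of compact support, $\J_\Omega u(\talpha,\x) = \J_{\R^2}(u(\cdot,\x_3))(\talpha,\tilde\x)$ for each fixed $\x_3$, as already noted after~\eqref{eq:BlochOmega}. First I would verify~\eqref{eq:transDeri} on the dense class $C^\infty_0(\Omega_H)$: differentiating the finite-in-each-compact-set sum~\eqref{eq:BlochOmega} termwise is legitimate, and $\partial_{\x_3}$ passes straight through while $\partial_{\tilde\x}^{\tilde{\bm\gamma}}$ acts on each shifted copy $u(\tilde\x+\Lambda\j,\x_3)$, which by the chain rule is exactly $\J_\Omega[\partial^{\bm\gamma}u]$. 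Then, since $\|\J_\Omega v\|_{L^2_r(\Omega_H^\Lambda)} \simeq \|v\|_{L^2_r(\Omega_H)}$ by the isometry part of Theorem~\ref{th:BlochR} (applied slicewise in $\x_3$ and integrated, with the weight $(1+|\tilde\x|^2)^{r/2}$ commuting with $\J_{\R^2}$ exactly as periodic coefficients do in~\eqref{eq:JComPerio} — up to the equivalence $C_\pm(\Lambda)$ coming from comparing weights on $\W$ versus $\R^2$), both sides of~\eqref{eq:transDeri} are bounded operators on $L^2_r$, so the identity extends from $C^\infty_0(\Omega_H)$ to all of $\HH^m_r(\Omega_H)$ by density. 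This simultaneously shows $\J_\Omega u$ has weak $\x$-derivatives up to order $m$ in $L^2_r(\Omega_H^\Lambda)$.

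For~\eqref{eq:aux639}, the mechanism is the interplay between multiplication by $\tilde\x$ and differentiation in $\talpha$: from the defining sum~\eqref{eq:BlochOmega}, $\partial_\talpha$ brings down a factor $(-\i\Lambda\j)$, whereas multiplying $u$ by $(-\i\tilde\y)^{\tilde{\bm\gamma}}$ before transforming produces $(-\i(\tilde\x+\Lambda\j))^{\tilde{\bm\gamma}}$ inside the sum. Expanding $(-\i(\tilde\x+\Lambda\j))^{\tilde{\bm\gamma}}$ by the binomial theorem in each of the two components separates it into $\sum_{\tilde{\bm\mu}\le\tilde{\bm\gamma}}\binom{\tilde{\bm\gamma}}{\tilde{\bm\mu}}(-\i\tilde\x)^{\tilde{\bm\gamma}-\tilde{\bm\mu}}(-\i\Lambda\j)^{\tilde{\bm\mu}}$, and the $(-\i\Lambda\j)^{\tilde{\bm\mu}}e^{-\i\talpha\cdot\Lambda\j}$ piece is precisely $\partial_\talpha^{\tilde{\bm\mu}}$ of the exponential; pulling $\partial_\talpha^{\tilde{\bm\mu}}$ out of the sum against the remaining $\tilde\x$-dependent factor gives the left-hand side of~\eqref{eq:aux639}. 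I would carry this out first for $u\in C^\infty_0(\Omega_H)$, where all sums are finite and differentiation under the sum is unproblematic, and then argue density: the hypothesis is $\x\mapsto\tilde\x^{\tilde{\bm\gamma}}u\in L^2(\Omega_H^\Lambda)$, i.e. $u\in L^2_{|\tilde{\bm\gamma}|_{\text{wt}}}$-type space, so approximate $u$ in the appropriate weighted $L^2$ norm; the right-hand side $\J_\Omega[(-\i\tilde\y)^{\tilde{\bm\gamma}}u]$ is continuous in $u$ with respect to that norm by Theorem~\ref{th:BlochR}(a), and on the left each term $(-\i\tilde\x)^{\tilde{\bm\gamma}-\tilde{\bm\mu}}\partial_\talpha^{\tilde{\bm\mu}}\J_\Omega u$ is controlled by lower-weight norms of $u$, which are dominated by the top-weight norm on the bounded cell $\W$.

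The main obstacle, and where care is needed, is the density/closability step for~\eqref{eq:aux639}: one must make sure that the identity, which a priori only makes pointwise-in-$\talpha$ sense via termwise-differentiated Fourier-type series, actually holds as an $L^2(\Omega_H^\Lambda)$-identity for all admissible $u$, and that the individual terms $\partial_\talpha^{\tilde{\bm\mu}}\J_\Omega u$ on the left are well-defined elements of $L^2$ (not merely distributions). The cleanest route is to invoke part~(b) of Theorem~\ref{th:BlochR}: for $u$ with $\tilde\x^{\tilde{\bm\gamma}}u\in L^2$, the weight-commutation identity (Leibniz formula~\eqref{eq:leibniz}) shows $\J_\Omega u\in\HH^{|\tilde{\bm\gamma}|}_\0(\Wast;L^2(\Omega_H^\Lambda))$ slicewise, so all the $\talpha$-derivatives up to order $|\tilde{\bm\gamma}|$ are genuine $L^2$ functions; then both sides of~\eqref{eq:aux639} are continuous functions of $u$ in a single Hilbert-space norm and agreement on the dense subspace $C^\infty_0(\Omega_H)$ forces agreement everywhere. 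The differentiation in $\x_3$ plays no role in this part and can be ignored throughout~\eqref{eq:aux639}; everything happens in the horizontal variables, exactly as for $\J_{\R^2}$.
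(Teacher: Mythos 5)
Your overall strategy is the one the paper uses: prove both identities by termwise differentiation of the defining series~\eqref{eq:BlochOmega} on a dense class of smooth, compactly supported functions (where the sum is locally finite), and for~\eqref{eq:aux639} observe that $\partial_\talpha$ brings down factors of $-\i\Lambda\j$ while multiplication by $(-\i\tilde\y)^{\tilde{\bm\gamma}}$ produces $(-\i(\tx+\Lambda\j))^{\tilde{\bm\gamma}}$, the two being related by the binomial/Leibniz expansion. The paper packages this last computation as $\partial^{\tilde{\bm\gamma}}_\talpha\bigl[e^{-\i\talpha\cdot\tx}\J_\Omega u\bigr]=e^{-\i\talpha\cdot\tx}\J_\Omega[(-\i\tilde\y)^{\tilde{\bm\gamma}}u]$ followed by the Leibniz formula~\eqref{eq:leibniz}, which is the same algebra you perform by expanding the binomial directly.

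There is one concrete flaw: you take $C^\infty_0(\Omega_H)$ as your dense class for~\eqref{eq:transDeri}. For $m\geq 1$ this space is \emph{not} dense in $\HH^m_r(\Omega_H)$; its closure in that norm is by definition $\tilde\HH^m_r(\Omega_H)$ (functions with vanishing trace on $\Gamma$ and $\Gamma_H$), so your density extension only reaches $\tilde\HH^m_r(\Omega_H)$ rather than all of $\HH^m_r(\Omega_H)$. The paper is careful on exactly this point: it works with functions in $C^\infty(\overline{\Omega_H})$ whose support is compact in $\overline{\Omega_H}$ and whose derivatives extend continuously to the boundary; these need not vanish on $\Gamma$ or $\Gamma_H$ and do form a dense subset of $\HH^m_r(\Omega_H)$, while the series in~\eqref{eq:BlochOmega} still has only finitely many nonzero terms. (This does not affect~\eqref{eq:aux639}, whose hypothesis is purely $L^2$-based, where $C^\infty_0$ density suffices.) A smaller inaccuracy: the weight $\tx\mapsto(1+|\tx|^2)^{r/2}$ is not $\Lambda$-periodic, so it does not commute with $\J_{\R^2}$ in the sense of~\eqref{eq:JComPerio}; the weight index $r$ is converted by the Bloch transform into smoothness in $\talpha$ (Theorem~\ref{th:BlochR}(b)), not into a weighted norm on the bounded cell $\Omega_H^\Lambda$, on which $L^2_r$ and $L^2$ anyway coincide up to equivalent norms. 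Neither issue changes the substance of the argument, but the dense class must be corrected for the first identity to hold on all of $\HH^m_r(\Omega_H)$.
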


\begin{proof}
Consider the set of all smooth functions in $C^\infty(\overline{\Omega_H})$ with compact support included in $\overline{\Omega_H}$, such that additionally all partial derivatives continuously extend to $\overline{\Omega_H}$. 
This set forms a dense subset of $\HH^m_r(\Omega_H)$.
Thus, it is sufficient to prove the claimed identities for such infinitely often differentiable functions: 
The identity stated in~\eqref{eq:transDeri} 
holds by exchanging derivatives and the series in the definition of $\J_{\Omega}$, which possesses only a finite number of non-zero terms by the compactness of the support of the argument of $\J_{\Omega}$. 
Moreover,~\eqref{eq:transDeri} implies the statement on the differentiability of $\J_{\Omega}$ with respect to $\x$. 
Concerning~\eqref{eq:aux639}, we compute for $\tilde{\bm\gamma} \in \N_0^2$ with $|\tilde{\bm\gamma}| \leq m$ that 
\begin{equation} \label{eq:bloDerCom}
\begin{split}
  \partial^{\tilde{\bm\gamma}}_\talpha \big[ e^{-\i \talpha \cdot \tilde{x}} \J_{\Omega} u(\talpha, \x) \big]
  &= \frac{|\det\Lambda|}{2\pi}^{1/2} \sum_{\j\in \Z^2} u(\tx + \Lambda \j,\x_3) \partial^{\tilde{\bm\gamma}}_\talpha e^{-\i \talpha \cdot (\tx+\Lambda \j)}\\
  & =  \frac{|\det\Lambda|}{2\pi}^{1/2} \sum_{\j\in \Z^2} u(\tx + \Lambda \j,\x_3) \big(-\i(\tx+\Lambda \j)\big)^{\tilde{\bm\gamma}} e^{-\i \talpha \cdot (\tx+\Lambda \j)} \\
  & = e^{-\i \talpha \cdot \tilde{x}} \J_{\Omega} \big[ \y \mapsto (-\i\tilde\y)^{\tilde{\bm\gamma}}  u(\y) \big](\talpha, \x), \quad \talpha \in\Wast, \x \in \Omega_H^\Lambda,
\end{split}
\end{equation}
which implies~\eqref{eq:aux639} by applying the Leibniz formula, see~\eqref{eq:leibniz}, to the left-hand side, noting that $(\talpha,\tx) \mapsto \exp(-\i \talpha \cdot \tx)$ is a smooth function in $\Wast$. 
\end{proof}


\begin{theorem}\label{th:BlochOmega}
The Bloch transform $\J_{\Omega}$ extends to an isomorphism between $\HH^s_r(\Omega_H)$ and $\HH^r_\0(\Wast; \HH^s_\talpha(\Omega_H^\Lambda))$ as well as between $\tilde\HH^s_r(\Omega_H)$ and $\HH^r_\0(\Wast; \tilde\HH^s_\talpha(\Omega_H^\Lambda))$ for all $s,r \in \R$. 
  For $s=r=0$, $\J_{\Omega}$ is an isometry between $L^2(\Omega_H) = \HH^0_0(\Omega_H)$ and $L^2(\Wast; L^2(\Omega_H^\Lambda)) = \HH^0_\0(\Wast; \HH^0_\talpha(\Omega_H^\Lambda))$. 
  Its inverse transform equals 
  \begin{equation}
    \label{eq:JOmegaInverse}
    \J_{\Omega}^{-1} w \left(\x+ \left( \begin{smallmatrix} \Lambda \j \\ 0 \end{smallmatrix} \right) \right)
    = \frac{|\det\Lambda|}{2\pi}^{1/2} 
    \int_\Wast w(\talpha, \x) e^{\i \talpha \cdot \Lambda \j}\d{\talpha}
    \quad \text{for }  \x \in \Omega_H^\Lambda, \, \j \in \Z^2 .
  \end{equation}
\end{theorem}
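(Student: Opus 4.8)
The plan is to reduce everything to Theorem~\ref{th:BlochR}, exactly as Theorem~\ref{th:Bloch} was reduced to it, exploiting the observation already recorded in the text that $\J_{\Omega} u(\talpha,\x) = \J_{\R^2}\big(u(\cdot,\x_3)\big)(\talpha,\tx)$ for $\x \in \Omega$, together with the fact that $\Omega_H$ is a periodic strip whose ``thin'' variable $\x_3$ is untouched by the transform. First I would handle the isometry statement for $s=r=0$: by Fubini, $\| \J_\Omega u \|_{L^2(\Wast;L^2(\Omega_H^\Lambda))}^2 = \int \| \J_{\R^2}(u(\cdot,\x_3)) \|_{L^2(\Wast;L^2(\W))}^2 \,\mathrm{d}\x_3 = \int \| u(\cdot,\x_3) \|_{L^2(\R^2)}^2\,\mathrm{d}\x_3 = \| u \|_{L^2(\Omega_H)}^2$, where the middle equality is Theorem~\ref{th:BlochR}(a) applied slicewise; the inverse formula~\eqref{eq:JOmegaInverse} then follows by inserting~\eqref{eq:X1} slicewise and invoking density of $C^\infty_0(\Omega_H)$.

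Next I would treat nonnegative integer orders $s=k$, $r=m \in \N_0$ by combining the two halves of Lemma~\ref{th:BlochPartDeri}. The space $\HH^m_k(\Omega_H)$ carries the equivalent norm~\eqref{eq:equivNormHOmega}, a finite sum of $L^2$-norms of $\tx^{\tilde{\bm\mu}}\partial^{\bm\gamma}u$, while $\HH^m_\0(\Wast;\HH^k_\talpha(\Omega_H^\Lambda))$ carries (by~\eqref{eq:equalNorm2}–\eqref{eq:equalNorm3} and~\eqref{eq:HmAlpha}) the equivalent norm built from $L^2$-norms of $\partial^{\tilde{\bm\mu}}_\talpha\partial^{\bm\gamma}_\x \J_\Omega u$. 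Identity~\eqref{eq:transDeri} converts $\partial^{\bm\gamma}_\x \J_\Omega u$ into $\J_\Omega[\partial^{\bm\gamma}u]$ at no cost, and identity~\eqref{eq:aux639}, rearranged via an inductive triangular-system argument (invert the lower-triangular Leibniz matrix in $\tilde{\bm\mu}\le\tilde{\bm\gamma}$, as is standard), expresses each $\partial^{\tilde{\bm\mu}}_\talpha \J_\Omega u$ as a finite linear combination of $\J_\Omega[\y\mapsto \tx^{\tilde{\bm\nu}}u]$ with polynomial-in-$\tx$ coefficients, and conversely. Applying the slicewise $L^2$-isometry of $\J_\Omega$ term by term then yields the two-sided norm equivalence, hence $\J_\Omega$ is an isomorphism $\HH^m_k(\Omega_H)\to\HH^m_\0(\Wast;\HH^k_\talpha(\Omega_H^\Lambda))$. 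The same computation restricted to functions with compact support in $\Omega_H$ (whose transforms are exactly the compactly-supported-in-$\Omega_H^\Lambda$ quasiperiodic functions) gives the claim for the tilde spaces.

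Then I would pass to general $s,r\in\R$ in three familiar moves, citing \cite{McLea2000,Lions1972,Bergh1976}: interpolation in $r$ between consecutive integers for $r>0$; for the smoothness index $s$, interpolation between integer orders and a duality argument, using that $\HH^{-s}_r(\Omega_H)$ is the dual of $\tilde\HH^s_r(\Omega_H)$ with respect to the $L^2_r$-pairing and that $\HH^{-r}_\0(\Wast;\HH^s_\talpha(\Omega_H^\Lambda))$ is the dual of $\HH^r_\0(\Wast;\HH^s_\talpha(\Omega_H^\Lambda))$ with respect to the $L^2(\Wast;\HH^s_\talpha(\Omega_H^\Lambda))$-pairing, exactly as in the proof of Theorem~\ref{th:BlochR}(b); for negative $r$, the dual pairing argument again. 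One must check the interpolation and duality assertions are compatible with the $\J_\Omega$-action, but since $\J_\Omega$ intertwines the two scales of spaces at the integer level and is an $L^2$-isometry at the base level, this is automatic.

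The main obstacle I expect is not any single estimate but bookkeeping: making the inversion of the triangular Leibniz system in~\eqref{eq:aux639} precise enough that the resulting finite linear combinations genuinely control, and are controlled by, the weighted norm~\eqref{eq:equivNormHOmega} with constants depending only on $m,k,\Lambda$ (and $H$, $\Omega_H$ through the fixed diffeomorphism $\bm\zeta$); and verifying that the duality pairings for the strip spaces $\HH^s_r(\Omega_H)$ — where boundary effects on $\Gamma$ and $\Gamma_H$ enter and where the $\tilde{\phantom{x}}$-versus-non-$\tilde{\phantom{x}}$ distinction matters — really are transformed into the stated pairings for $\HH^r_\0(\Wast;\HH^s_\talpha(\Omega_H^\Lambda))$ under $\J_\Omega$. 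Both are routine but need care; the analytic content is entirely contained in Theorem~\ref{th:BlochR} and Lemma~\ref{th:BlochPartDeri}.
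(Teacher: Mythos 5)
Your proposal is correct and follows essentially the same route as the paper's own proof: the slicewise reduction to the $L^2(\R^2)$-isometry of $\J_{\R^2}$ for $s=r=0$, the use of both identities of Lemma~\ref{th:BlochPartDeri} together with the triangular inversion of the Leibniz relation~\eqref{eq:aux639} to get the two-sided norm equivalence at integer orders, the remark that the same computation for $C^\infty_0$-functions handles the tilde spaces, and finally interpolation in the positive indices followed by the two duality arguments. No substantive differences.
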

\begin{proof}
To prove that $\J_{\Omega}$ is an isometry from $L^2(\Omega_H)$ into $L^2(\Wast; L^2(\Omega_H^\Lambda))$ it is by density sufficient to consider a smooth function $u \in C^\infty_0(\Omega_H)$ with compact support included in $\Omega_H^\Lambda$. 
We extend $u$ by zero to all of $\R^3$ and exploit the isometry of $\J_{\R^2}$ on $L^2(\R^2)$, 
\begin{align}
  & \|  \J_{\Omega} u \|_{L^2(\Wast; L^2(\Omega_H^\Lambda))}^2 
   = \int_\Wast \int_{\Omega_H^\Lambda} \big|  \J_{\Omega} u \left( \talpha, \x \right) \big|^2 \d{\x} \d{\talpha} \label{eq:JOmIso} \\ 
  & = \int_0^H \int_\Wast  \int_\W \big|  \J_{\R^2} u \left( \talpha, \big( \begin{smallmatrix}\tx \\ \x_3 \end{smallmatrix}\big) \right) \big|^2 \d{\tx} \d{\talpha} \d{x_3} 
   = \int_0^H \int_{\R^2} \big| u \big( \begin{smallmatrix}\tx \\ \x_3 \end{smallmatrix}\big) \big|^2 \d{\tx} \d{x_3}
  = \| u \|_{L^2(\Omega_H)}^2 \nonumber
\end{align}
as $u$ vanishes outside the domain $\Omega_H$. 

Next, we show that $\J_{\R^2}$ extends to an isomorphism from $\HH^s_r(\Omega_H)$ onto $\HH^r_\0(\Wast; \HH^s_\talpha(\Omega_H^\Lambda))$. 
We first treat the case where both indices $s$ and $r$ are entire numbers and choose $\ell=s \in\N_0$ and $m=r\in\N_0$. 
We again exploit that smooth functions in $C^\infty(\overline{\Omega_H})$ such that their support is compact and included in $\overline{\Omega_H}$, and such that all their partial derivatives extend continuously to $\overline{\Omega_H}$, are dense in $\HH^\ell_m(\Omega_H)$.  
For such a smooth function $u$, we find that 
\begin{align*}
  \| \J_{\Omega} u \|_{\HH^\ell_\0(\Wast; \HH^m_\talpha(\Omega_H^\Lambda))}^2 
   & = \sum_{\tilde{\bm\gamma} \in \N^2_0,\, |\tilde{\bm\gamma}| \leq \ell} \int_\Wast  \sum_{\bm\eta \in \N^3_0, \, |\bm\eta| \leq m} \int_{\Omega_H^\Lambda} \big| \partial^{\tilde{\bm\gamma}}_\talpha \partial_\x^{\bm\eta} \J_{\Omega} u \left( \talpha, \x \right) \big|^2 \d{\x} \d{\talpha}  \\
  &   \stackrel{\eqref{eq:transDeri}}{=} 
  \sum_{|\tilde{\bm\gamma}| \leq \ell} \int_\Wast \sum_{|\bm\eta| \leq m} \int_{\Omega_H^\Lambda} \Big| \partial^{\tilde{\bm\gamma}}_\talpha  \J_{\Omega} [\partial^{\bm\eta} u] \left( \talpha, \x \right) \Big|^2 \d{\x} \d{\talpha}  
  =: (\ast).
\end{align*}
By~\eqref{eq:aux639} and the triangle inequality, we note that 
\begin{align*}
  \bigg| \partial^{\tilde{\bm\gamma}}_\talpha  \J_{\Omega} v (\talpha, \x) \bigg|^2
  & = \bigg| \J_{\Omega} [\y \mapsto  (-\i\tilde{\y})^{\bm\gamma} v(\y)]  \left( \talpha, \x \right) 
  - \sum_{\tilde{\bm\mu} \in \N_0^2, \, \tilde{\bm\mu} < \tilde{\bm\gamma}} {\tilde{\bm\gamma} \choose \tilde{\bm\mu}}  \big( -\i \tx \big)^{\tilde{\bm\gamma} - \tilde{\bm\mu}} \, \partial^{\tilde{\bm\mu}}_\talpha \J_{\Omega} v(\talpha, \x)\bigg|^2 \\
  & \leq C(\Lambda, \ell) \left[ \left| \J_{\Omega} [\y \mapsto  (-\i\tilde{\y})^{\tilde{\bm\gamma}} v(\y)]  \left( \talpha, \x \right) \right|^2
  + \sum_{\tilde{\bm\mu} < \tilde{\bm\gamma}} \Big| \partial^{\tilde{\bm\mu}}_\talpha \J_{\Omega} v(\talpha, \x)\Big|^2 \right] \\
  & \leq C(\Lambda, \ell) \sum_{\tilde{\bm\mu} \leq \tilde{\bm\gamma}} \Big| \J_{\Omega} [\y \mapsto  (-\i\tilde{\y})^{\tilde{\bm\mu}} v(\y)]  \left( \talpha, \x \right) \Big|^2, \quad \talpha \in\Wast, \x \in \Omega_H^\Lambda,
\end{align*}
with $C(\Lambda,\ell) \leq (1+|\LambdaAst|)^{\ell} (\ell!)^2$. 
As $\J_{\Omega}$ is an isometry from $L^2(\Omega_H)$ into $L^2(\Wast; L^2(\W))$ by~\eqref{eq:JOmIso}, we conclude that 
\begin{align}
  & (\ast) 
  \leq C(\Lambda, \ell)  \sum_{|\tilde{\bm\gamma}| \leq \ell} \sum_{|\bm\eta| \leq m} \sum_{\tilde{\bm\mu} \leq \tilde{\bm\gamma}}  \int_\Wast  \int_{\Omega_H^\Lambda} \Big| \J_{\Omega} [\y \mapsto  (-\i\tilde{\y})^{\tilde{\bm\mu}} \partial^{\bm\eta} u(\y)]  \left( \talpha, \x \right) \Big|^2 \d{\x} \d{\talpha} \nonumber \\
  & = C(\Lambda, \ell)  \sum_{|\tilde{\bm\gamma}| \leq \ell} \sum_{|\bm\eta| \leq m} \sum_{\tilde{\bm\mu} \leq \tilde{\bm\gamma}}  \int_{\Omega_H} \Big| \tilde{\y}^{\tilde{\bm\mu}} \partial^{\bm\eta} u(\y) \Big|^2 \d{\y} \label{eq:aux805}\\
  & \leq C(\Lambda, \ell) \sum_{|\bm\eta| \leq m}  \int_{\Omega_H} (1+ |\tilde\y|^2)^\ell \big|  \partial^{\bm\eta} u(\y) \big|^2 \d{\y} 
  \leq C(\Lambda,\ell) \| u \|_{\WW^{m}_\ell(\Omega_H)}^2. \nonumber
\end{align}
Equivalence of the norms $\| \cdot \|_{\HH^m_\ell(\Omega_H)}$ and $\| \cdot \|_{\WW^{m}_\ell(\Omega_H)}$ on $\HH^m_\ell(\Omega_H)$, see~\eqref{eq:equivNormHOmega}, now implies that $\| \J_{\Omega} u \|_{\HH^\ell_\0(\Wast; \HH^m_\talpha(\Omega_H^\Lambda))} \leq C(\Lambda,\ell) \| u \|_{\HH^{m}_\ell(\Omega_H)}$, such that $\J_{\Omega}$ is bounded from $\HH^m_\ell(\Omega_H)$ into $\HH^\ell_\0(\Wast; \HH^m_\talpha(\Omega_H^\Lambda))$. 
The reverse inequality is also due to the latter norm equivalence, as for all $u\in C^\infty(\overline{\Omega_H})$ there holds  
\begin{align*}
  \| u \|_{\HH^m_\ell(\Omega_H)}^2 
  &\leq C \sum_{\bm\eta \in \N_0^3, |\bm\eta|\leq m} \int_{\Omega_H} (1+|\tx|^2)^{\ell}  |\partial^{\bm\eta} u(\x)|^2 \d{\x} \\
  & \leq C(\ell) \sum_{|\bm\eta|\leq m} \sum_{\tilde{\bm\gamma} \in \N_0^2, |\tilde{\bm\gamma}|\leq \ell} \int_{\Omega_H} \big| (-\i \tx)^{\tilde{\bm\gamma}} \partial^{\bm\eta} u(\x)\big|^2 \d{\x} \\
  & = C(\ell) \sum_{|\bm\eta|\leq m} \sum_{|\tilde{\bm\gamma}|\leq \ell} \int_{\Wast} \int_{\Omega_H^\Lambda} \big| \J_{\Omega} [ y \mapsto (-\i \tilde\y)^{\tilde{\bm\gamma}} \partial^{\bm\eta} u(\y)](\talpha; \x) \big|^2 \d{\x} \d{\talpha} = (\bullet). 
\end{align*}
Next, the identity~\eqref{eq:aux639} implies that 
\begin{align*}
  (\bullet)
  & \leq C(\ell) \sum_{|\bm\eta|\leq m} \sum_{|\tilde{\bm\gamma}|\leq \ell} \int_{\Wast} \int_{\Omega_H^\Lambda} \sum_{\tilde{\bm\mu} \leq \tilde{\bm\gamma}}  \Big| {\tilde{\bm\gamma} \choose \tilde{\bm\mu}} \, \big( -\i \tx \big)^{\tilde{\bm\gamma} - \tilde{\bm\mu}} \, \partial^{\tilde{\bm\mu}}_\talpha \partial^{\bm\eta}_{\x} \J_{\Omega} u(\talpha, \x) 
 \Big|^2 \d{\x} \d{\talpha}   \\
 & \leq C(\ell) \sum_{|\bm\eta|\leq m} \sum_{|\tilde{\bm\gamma}|\leq \ell} \int_{\Wast} \int_{\Omega_H^\Lambda} \sum_{\tilde{\bm\mu} \leq \tilde{\bm\gamma}}  \Big| \partial^{\tilde{\bm\mu}}_\talpha \partial^{\bm\eta}_{\x} \J_{\Omega} u(\talpha, \x) 
 \Big|^2 \d{\x} \d{\talpha} 
 \leq C(\ell) \| \J_{\Omega} u \|_{\HH^\ell_\0(\Wast; \HH^m_\talpha(\Omega_H^\Lambda))}^2 
\end{align*}
for a constant $C(\ell)$ that grows at most as $(1+|\LambdaAst|^2)^\ell (\ell!)^2$. 
%
%
The latter estimate together with~\eqref{eq:aux805} shows that $\J_{\Omega}$ is an isomorphism from $\HH^m_\ell(\Omega_H)$ into $\HH^\ell_\0(\Wast; \HH^m_\talpha(\Omega_H^\Lambda))$ for all $m\in\N_0$ and $\ell \in\N_0$.
Obviously, the above computations also hold for a smooth function $u \in C^\infty_0(\Omega_H^\Lambda)$, such that $\J_{\Omega}$ is also an isomorphism between $\tilde\HH^m_\ell(\Omega_H)$ into $\HH^\ell_\0(\Wast; \tilde\HH^m_\talpha(\Omega_H^\Lambda))$, defined as closures of smooth and compactly supported functions. 
Interpolation first in $m \in\N_0$ and in $\ell \in \N_0$ extends these two results to all real and positive indices $s \geq 0$ and $r \geq 0$.
 
Next, duality of $\HH^s_r(\Omega_H)$ and $\HH^{s}_{-r}(\Omega_H)$ for the scalar product of $\HH^s(\Omega_H)$, and of $\HH^r_\0(\Wast; \HH^s_\talpha(\Omega_H^\Lambda))$ and $\HH^{-r}_\0(\Wast; \HH^{s}_\talpha(\Omega_H^\Lambda))$ for the scalar product of $L^2(\Wast; \HH^{s}_\talpha(\Omega_H^\Lambda))$ for $r \geq 0$ and $s \geq 0$ yields the claimed isomorphy properties of $\J_{\Omega}$ for the entire range of $r \in\R$. 
Another duality argument in $s$ for the scalar product of $L^2(\Omega_H)$ finally yields the entire range $s,r \in\R$. 
The arguments for $\tilde\HH^{s}_{r}(\Omega_H)$ and $\tilde\HH^s(\Omega_H)$ are fully analogous.
\end{proof}

\section{Periodic Surface Scattering}\label{se:perioSurfScatt}
The Bloch transform reduces acoustic scattering problems from periodic surfaces with non-periodic boundary data to a family of decoupled quasiperiodic scattering problems. 
To illustrate this reduction, we consider wave scattering from the $\Lambda$-periodic surface $\Gamma = \{ {\bm\zeta}(\tilde{\y},0): \, \tilde{\y} \in \R^2 \}$ in the periodic domain of propagation ${\Omega} = \big\{ {\bm\zeta}(\y): \, \y\in\R^3, \, \y_3>0  \big\}$, defined in Sections~\ref{se:blochSurf} and~\ref{se:blochDom}. 

Considering the Helmholtz equation at constant wave number $k>0$ for a scalar function $u$, 
\begin{equation}\label{eq:HE}
  \Delta u + k^2 u = 0 \quad \text{in } \Omega \subset \R^3,  
\end{equation}
we choose a weight parameter $r > -1$ and seek for weak solutions to this problem in 
\[
  \HH^1_{r, \loc}(\Omega) := \left\{ u \in \mathcal{D}'(\Omega): \, \left. u \right|_{\Omega_H} \in \HH^1_r(\Omega_H)  \text{ for all } H \geq H_0 \right\}.
\] 
It is well-known that the corresponding trace space equals $\HH^{1/2}_r(\Gamma)$ and that the trace operator $\left. u \right|_{\partial \Omega}$ is bounded from $\HH^1_r(\Omega_H)$ onto $\HH^{1/2}_r(\Gamma)$. 
For a $\Lambda$-periodic coefficient $\theta_\Lambda \in L^\infty(\Gamma)$, we introduce impedance- or Robin-type boundary conditions with right-hand side $f \in \HH^{-1/2}_r(\Gamma)$, 
\begin{equation}\label{eq:bcHe}
  \frac{\partial u}{\partial \bm{\nu}} - \theta_\Lambda u = f \quad \text{on } \Gamma .
\end{equation}
(Here and in the following, $\bm{\nu}$ is the unit normal to $\Gamma$ that points into $\Omega$.) 
Writing $\dhat{u}(\cdot,H_0)$ for the Fourier transform of the restriction of $u$ to $\Gamma_{H_0}$, see~\eqref{eq:fourierTrafo} and~\eqref{eq:H0}, we require $u$ to satisfy the following angular spectrum representation as a radiation condition, 
\begin{equation}\label{eq:URC}
  u(\x) = \frac{1}{2\pi} \int_{\R^2} e^{\i \tx \cdot \bm\xi + \i \sqrt{k^2 - |\bm\xi|^2} (\x_3-H_0)} \dhat{u}(\bm\xi,H_0) \d{\bm\xi} 
  \quad \text{for } \x_3 > H_0. 
\end{equation}
Here $\sqrt{k^2 - |\bm\xi|^2}  = \i \sqrt{|\bm\xi|^2 - k^2}$ for $|\bm\xi|^2 > k^2$; more generally, we extend the square-root function analytically into the complex plane slit at the negative imaginary axis. 
Note that~\eqref{eq:URC} implies that $u$ satisfies that relation for $H_0$ replaced by any $H>H_0$. 
Restricting the equality in~\eqref{eq:URC} formally to $\Gamma_H$ provides a link between the normal derivative of $u$ on $\Gamma_H$ and the exterior Dirichlet-to-Neumann operator $T^+$, 
\[
  \frac{\partial u}{\partial \x_3}(\tx , H) 
  = \frac{\i}{2\pi} \int_{\R^2} \sqrt{k^2 - |\bm\xi|^2} \, e^{\i \tx \cdot \bm\xi} \, \dhat{u}(\bm\xi,H) \d{\bm\xi}
  =: T^+ \left( u|_{\Gamma_{H}} \right)(\tx , H), 
  \quad 
  H>H_0, 
\]
which is continuous from $\HH^{1/2}_{r'}(\Gamma_H)$ into $\HH^{-1/2}_{r'}(\Gamma_H)$ if $|r'|<1$, see~\cite{Chand2010}.
Thus, the variational formulation of~(\ref{eq:HE}--\ref{eq:bcHe}) for $r>-1$ together with the radiation condition~\eqref{eq:URC} is to find $u \in \HH^1_r(\Omega_H)$ such that 
\begin{equation}\label{eq:varFormHEScal}
  \int_{\Omega_H} \left[ \nabla u \cdot \nabla \overline{v} - k^2 u \,\overline{v} \right] \d{\x}
  + \int_{\Gamma} \theta_\Lambda \, u \, \overline{v} \dS  
  - \int_{\Gamma_H} T^+\left( u|_{\Gamma_{H}} \right) \, \overline{v} \dS 
  = \int_{\Gamma} f \, \overline{v} \dS    
\end{equation}
for all $v \in \HH^1(\Omega_H)$ with compact support in $\overline{\Omega_H^\Lambda}$.
(As $T^+$ is continuous between $\HH^{\pm1/2}_{r'}(\Gamma_H)$ for $|r'|<1$, the latter equation is well-defined for test functions with compact support.) 
The next theorem illustrates that the Bloch transform of $u$ weakly solves a transformed Helmholtz equation with periodic boundary conditions. 
It relies on a periodic Dirichlet-to-Neumann operator $T_\talpha^+$ on $\Gamma_H^\Lambda = \{  \x \in \Gamma_H: \, \tx \in \W \} \subset \Gamma_H$, which is continuous from $\HH^{1/2}_\talpha(\Gamma_H^\Lambda)$ into $\HH^{-1/2}_\talpha(\Gamma_H^\Lambda)$,
\[
   T_\talpha^+ \left( \varphi \right)
   = \i \sum_{\j\in\Z^2} \beta_j(\talpha,k) \, \hat{\varphi}(\j) \, e^{\i (\LambdaAst \j+\talpha) \cdot \tx}
   \quad \text{for } 
   \varphi = \sum_{\j\in\Z^2} \hat{\varphi}(\j) e^{\i (\LambdaAst \j + \talpha) \cdot \tx},
\]
where, as above, $\beta_j(\talpha,k) := \sqrt{k^2 - |\LambdaAst \j + \talpha|^2}$ is $\i \, |k^2 - |\LambdaAst \j + \talpha|^2|^{1/2}$ if $k<|\LambdaAst \j + \talpha|$. 

\begin{theorem}\label{th:equiScalarPerio}
For $|r|<1$, a function $u \in \HH^1_r(\Omega_H^\Lambda)$ solves~\eqref{eq:varFormHEScal} for $f \in \HH^{-1/2}_r(\Gamma)$ if and only if $w := \J_{\Omega} u \in \HH^r_\0(\Wast; \HH^1_\talpha(\Omega_H^\Lambda))$ solves   
\begin{equation}\label{eq:heAlpha}
  \int_\Wast a_{k,\talpha} \big(w(\talpha,\cdot),v(\talpha,\cdot) \big) \d{\talpha}
  = \int_\Wast \int_{\Gamma_\Lambda}  \J_{\Gamma} f(\talpha,\cdot) \, \overline{v(\talpha,\cdot)} \dS \d{\talpha} 
\end{equation} 
for all $v \in \HH^{-r}_\0(\Wast; \HH^1_\talpha(\Omega_H^\Lambda))$, where 
\begin{multline}
  a_{k,\talpha} \big(w_\talpha,v_\talpha\big)
  := \int_{\Omega_H^\Lambda} \Big[ \nabla w_\talpha(\x) \cdot \nabla \overline{v_\talpha(\x)} - k^2 w_\talpha(\x) \, \overline{v_\talpha(\x)} \Big] \d{\x} 
  + \int_{\Gamma_\Lambda} \theta_\Lambda \, w_\talpha(\x) \, \overline{v_\talpha(\x)} \dS  \\ 
  - \int_{\Gamma_H^\Lambda} T_\talpha^+ \left( w_\talpha(\x)|_{\Gamma_{H}} \right) \, \overline{v_\talpha(\x)} \dS
  \quad \text{for all $\talpha \in \Wast$ and all $w_\talpha,v_\talpha \in \HH^1_\talpha(\Omega_H^\Lambda)$.} \label{eq:varFormXXX}
\end{multline}
Extending $w(\talpha, \cdot)$ from~\eqref{eq:heAlpha} by $\Lambda$-periodicity to $\Omega_H$ and setting $\hat{w}(\talpha,\j) = \dhat{u}(\LambdaAst \j+\talpha, H)$ allows to extend $w(\talpha, \cdot)$ to $\Omega$ by
\begin{equation}\label{eq:URCAlpha}
  w(\talpha, \x) = 
  |\det \Lambda|^{-1/2} \, \sum_{\j\in\Z^2} \hat{w}(\talpha,\j) \, e^{\i (\LambdaAst \j + \talpha)\cdot \tx + \i \beta_\j(\talpha, k)\,  (\x_3-H) } \, 
  \quad \text{ for $\x_3 > H$.}
\end{equation}
This extension yields an $\talpha$-quasiperiodic weak solution to $\Delta_\x w + k^2 w =0$ in $\Omega$ that satisfies $(\partial w / \partial \bm{\nu}) - \theta_\Lambda w = \J_{\Gamma} f(\talpha,\cdot)$ on $\Gamma$.
\end{theorem}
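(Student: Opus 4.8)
The strategy is to push the variational identity~\eqref{eq:varFormHEScal} through the Bloch transform $\J_{\Omega}$, sesquilinear-form term by term, and then to identify the result, tested against the Bloch transforms of the admissible test functions, with~\eqref{eq:heAlpha} tested against the full dual space. Four properties of $\J_{\Omega}$ do the work. First, by Theorem~\ref{th:BlochOmega} it is a topological isomorphism from $\HH^1_r(\Omega_H)$ onto $\HH^r_\0(\Wast;\HH^1_\talpha(\Omega_H^\Lambda))$, and — arguing exactly as for Corollary~\ref{th:propertiesBlochR}, now applied to $\J_{\Omega}$ — its $L^2$-adjoint equals its inverse~\eqref{eq:JOmegaInverse}; together with $\J_{\Omega}^{-1}\J_{\Omega}=\Id$ this yields the Parseval identity $\int_{\Omega_H} g_1\overline{g_2}\d{\x}=\int_\Wast\int_{\Omega_H^\Lambda}\J_{\Omega}g_1\,\overline{\J_{\Omega}g_2}\d{\x}\d{\talpha}$, and likewise on $\Gamma$ (Theorem~\ref{th:Bloch}) and on $\Gamma_H$ (Theorem~\ref{th:BlochR}, since the Bloch transform on the flat plane $\Gamma_H$ is just $\J_{\R^2}$). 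Second, $\J_{\Omega}$ intertwines differentiation, $\J_{\Omega}[\partial^{\bm\gamma}u]=\partial_\x^{\bm\gamma}\J_{\Omega}u$, by Lemma~\ref{th:BlochPartDeri} and~\eqref{eq:transDeri}. Third, $\J_{\Omega}$ commutes with multiplication by the $\Lambda$-periodic coefficient $\theta_\Lambda$, i.e.\ $\J_{\Gamma}(\theta_\Lambda\,u|_\Gamma)(\talpha,\cdot)=\theta_\Lambda\,\J_{\Gamma}(u|_\Gamma)(\talpha,\cdot)$, which is~\eqref{eq:JComPerio} read on the surface. Fourth, $\J_{\Omega}$ block-diagonalizes the exterior Dirichlet-to-Neumann map: on $\Gamma_H$ one has $\J_{\R^2}(T^+\phi)(\talpha,\cdot)=T_\talpha^+\big(\J_{\R^2}\phi(\talpha,\cdot)\big)$.

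Only the fourth property is specific to the present problem, so I would verify it first. Since $T^+$ is the Fourier multiplier $\dhat{\phi}(\bm\xi)\mapsto\i\sqrt{k^2-|\bm\xi|^2}\,\dhat{\phi}(\bm\xi)$, and since~\eqref{eq:blochHoppla} identifies the $\j$-th Fourier coefficient of $\tx\mapsto\J_{\R^2}\phi(\talpha,\tx)$ with $\dhat{\phi}(\LambdaAst\j+\talpha)$, the $\j$-th Fourier coefficient of $\J_{\R^2}(T^+\phi)(\talpha,\cdot)$ equals $\i\sqrt{k^2-|\LambdaAst\j+\talpha|^2}\,\dhat{\phi}(\LambdaAst\j+\talpha)=\i\beta_\j(\talpha,k)\,\widehat{\J_{\R^2}\phi(\talpha,\cdot)}(\j)$, which is exactly what $T_\talpha^+$ produces out of $\J_{\R^2}\phi(\talpha,\cdot)$. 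The same computation also explains the restriction $|r|<1$: it is precisely the range of horizontal weights for which both $T^+$ and $T_\talpha^+$ are bounded between the $\HH^{\pm1/2}$-spaces involved.

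With these four facts and writing $w:=\J_{\Omega}u$, applying $\J_{\Omega}$ to each term of~\eqref{eq:varFormHEScal} and invoking the Parseval identities gives, for every admissible test function $v\in\HH^1(\Omega_H)$ of compact support,
\[
  \int_\Wast a_{k,\talpha}\big(w(\talpha,\cdot),\J_{\Omega}v(\talpha,\cdot)\big)\d{\talpha}
  =\int_\Wast\int_{\Gamma_\Lambda}\J_{\Gamma}f(\talpha,\cdot)\,\overline{\J_{\Omega}v(\talpha,\cdot)}\dS\d{\talpha}.
\]
The admissible test functions are dense in $\HH^1_{-r}(\Omega_H)$ (here $-r\in(-1,1)$), so by Theorem~\ref{th:BlochOmega} their Bloch transforms are dense in $\HH^{-r}_\0(\Wast;\HH^1_\talpha(\Omega_H^\Lambda))$. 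Moreover, both sides of~\eqref{eq:varFormHEScal} are bounded conjugate-linear functionals of $v$ on $\HH^1_{-r}(\Omega_H)$ — for the left side this uses again that $|r|<1$ keeps $T^+$ bounded on the relevant weighted spaces, and for the right side it uses $f\in\HH^{-1/2}_r(\Gamma)$ and the trace theorem — hence, transported through the isomorphism $\J_{\Omega}^{-1}$, they define bounded functionals of $\J_{\Omega}v$ on $\HH^{-r}_\0(\Wast;\HH^1_\talpha(\Omega_H^\Lambda))$. Since the displayed identity holds on the dense range of $\J_{\Omega}$, it extends to all test functions in $\HH^{-r}_\0(\Wast;\HH^1_\talpha(\Omega_H^\Lambda))$, which is~\eqref{eq:heAlpha}. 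Conversely, given a solution $w$ of~\eqref{eq:heAlpha}, set $u:=\J_{\Omega}^{-1}w$ and read the same chain backwards, using surjectivity of $\J_{\Omega}$, to recover~\eqref{eq:varFormHEScal}.

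Finally, for the far-field assertions: applying $\J_{\Omega}$ to the angular spectrum representation~\eqref{eq:URC} and writing $\int_{\R^2}\d{\bm\xi}=\sum_{\j\in\Z^2}\int_\Wast\d{\talpha}$ along $\bm\xi=\LambdaAst\j+\talpha$ turns~\eqref{eq:URC} into exactly~\eqref{eq:URCAlpha}, once one knows $\hat w(\talpha,\j)=\dhat{u}(\LambdaAst\j+\talpha,H)$; but this last relation is again~\eqref{eq:blochHoppla}, applied to $\tx\mapsto u(\tx,H)$. Each summand of~\eqref{eq:URCAlpha} is an $\talpha$-quasiperiodic solution of $\Delta_\x w+k^2w=0$, so the extension inherits this. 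To pass from the integrated identity~\eqref{eq:heAlpha} to the stated pointwise-in-$\talpha$ conclusions, I would test with functions of the product form $v(\talpha,\x)=g(\talpha)\,\psi_\talpha(\x)$ with $g\in L^2(\Wast)$ arbitrary and $\psi_\talpha$ a measurably $\talpha$-dependent $\talpha$-quasiperiodic function (e.g.\ $\psi_\talpha(\x)=e^{\i\talpha\cdot\tx}\phi_\Lambda^{(\j)}(\x)$, $\j\in\Z^2$); letting $g$ vary decouples~\eqref{eq:heAlpha} into $a_{k,\talpha}\big(w(\talpha,\cdot),\psi_\talpha\big)=\int_{\Gamma_\Lambda}\J_{\Gamma}f(\talpha,\cdot)\,\overline{\psi_\talpha}\dS$ for a.e.\ $\talpha$, and density of such $\psi_\talpha$ in $\HH^1_\talpha(\Omega_H^\Lambda)$ gives this for all test functions in $\HH^1_\talpha(\Omega_H^\Lambda)$; choosing those supported away from $\Gamma_H$ and integrating by parts then yields the Helmholtz equation on $\Omega_H^\Lambda$ and the boundary condition $(\partial w/\partial\bm\nu)-\theta_\Lambda w=\J_{\Gamma}f(\talpha,\cdot)$ on $\Gamma$. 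I expect the main obstacle to be the bookkeeping of duality and density across the (possibly negative) weight $r$ in the third step, together with the fiberwise decoupling just described, both of which must be handled carefully because the quasiperiodic spaces $\HH^1_\talpha(\Omega_H^\Lambda)$ genuinely vary with $\talpha$; by contrast, once the Fourier representation~\eqref{eq:blochHoppla} is available, the four intertwining properties are essentially routine.
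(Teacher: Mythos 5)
Your proposal follows essentially the same route as the paper: transport the variational identity through $\J_{\Omega}$ using its isomorphism and adjoint-equals-inverse properties, the intertwining with derivatives from Lemma~\ref{th:BlochPartDeri}, commutation with the $\Lambda$-periodic coefficient, and the key observation — obtained from the Fourier representation~\eqref{eq:blochHoppla} applied to the angular spectrum representation~\eqref{eq:URC} — that the Bloch transform diagonalizes $T^+$ into the quasiperiodic operators $T^+_\talpha$ and turns~\eqref{eq:URC} into~\eqref{eq:URCAlpha}. Your write-up is in fact more explicit than the paper's about the density/duality bookkeeping and the fiberwise decoupling in $\talpha$, but no genuinely different idea is involved.
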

\begin{proof}
As in~\cite{Chand2010} for the Dirichlet problem one shows that~\eqref{eq:varFormHEScal} possesses a unique solution in $\H^1_r(\Omega_H)$ if the right-hand side $f$ belongs to $\HH^{-1/2}_r(\Gamma)$ for $|r|<1$. 
Hence, the Bloch transform $w = \J_\Omega u$ belongs to $\HH^r_\0(\Wast; \HH^1_\talpha(\Omega_H^\Lambda))$ by Theorem~\ref{th:BlochOmega} and it merely remains to show that $w$ satisfies~\eqref{eq:heAlpha}. 
To this end, we apply the composition of the inverse Bloch transform and the Bloch transform $\J_{\Omega}$ to a solution $u \in \H^1_r(\Omega_H^\Lambda)$ of~\eqref{eq:varFormHEScal} and invert, i.e., adjunct, $J_{\Omega}^{-1} = \J_{\Omega}^\ast$. 
The gradient $\nabla u$ transforms under the Bloch transform to $\nabla_\x \J_{\Omega} u(\talpha,\cdot) = \nabla_\x w(\talpha,\cdot)$ due to Lemma~\ref{th:BlochPartDeri}. 
As the boundary datum on the right of~\eqref{eq:heAlpha} follows by taking the Bloch transform $\J_{\Gamma}$ of both sides of~\eqref{eq:bcHe}, it merely remains to show that $w(\talpha,\cdot) = \J_{\Omega} u$ satisfies $\partial w (\talpha,\cdot) / \partial \x_3 = T_\talpha^+ w(\talpha,\cdot)$ on $\Gamma_H^\Lambda$ to conclude that $w =\J_{\Omega} u$ satisfies the variational formulation~\eqref{eq:heAlpha}. 
To this end, we show that $w(\talpha,\cdot)$ satisfies~\eqref{eq:URCAlpha}, which implies~\eqref{eq:heAlpha}. 
The radiation condition~\eqref{eq:URCAlpha} for $w(\talpha,\cdot)$ follows from computing the Bloch transform of the upwards radiation condition~\eqref{eq:URC} for $u$ by formula~\eqref{eq:blochHoppla}: 
For $\talpha \in \Wast$ and $\x \in \Gamma_H$, 
\begin{align*}
  \J_{\R^2} \big( u|_{\Gamma_H} \big)(\talpha,x)
  & = \frac1{|\det \Lambda|^{1/2}} \sum_{\j\in\Z^2} \dhat{u}(\LambdaAst \j+\talpha) \, e^{\i (\LambdaAst \j+\talpha) \cdot \tx + \i \sqrt{k^2 - |\LambdaAst \j+\talpha|^2}(\x_3-H)}.
\end{align*}
The reverse direction of the claimed equivalence follows similarly, as the inverse Bloch transform is an isomorphism from $\HH^r_\0(\Wast; \HH^1_\talpha(\Omega_H^\Lambda))$ onto $\HH^1_{r}(\Omega_H)$, see Theorem~\ref{th:BlochOmega}. 
\end{proof}

\begin{remark}
  For $r \leq -1$ it is not obvious how to set up a variational formulation as $T^+$ fails to be continuous between $\HH^{\pm1/2}_{r}(\Gamma_H)$.
  As the Bloch transform is not concerned by this, one might of course take the (formally) transformed problems~\eqref{eq:heAlpha} to define a radiation condition for solutions in, e.g., $\H^1_r(\Omega_H)$ of the Helmholtz equation. 
\end{remark}

We briefly recall well-known solution theory for the decoupled $\talpha$-quasiperiodic problems defined via the sesquilinear forms $a_{k,\talpha}$ from~\eqref{eq:varFormXXX}, assuming from now on that $\Im \theta_\Lambda \geq 0$ on $\Gamma$.
The sesquilinear form $a_{k,\talpha}$ satisfies a G\r{a}rding inequality on $H^1_\talpha(\Omega_H)$, see, e.g.,~\cite{Bonne1994, Elsch1998}, such that existence of solution to the variational problem $a_{k,\talpha}(w_\talpha,v_\talpha)=G(\overline{v_\talpha})$ for all $v_\talpha \in \H^1_\talpha(\Omega_H^\Lambda)$ for fixed $\talpha \in \Wast$ follows from uniqueness. 
Uniqueness for fixed $\talpha$, in turn, can be shown either via particular Rellich identities in case that $\Gamma$ is graph of a function and $\Re\theta_\Lambda \leq 0$ as in~\cite{Chand2005, Chand2010}.

\begin{remark}
One cannot expect $a_{k,\talpha}$ to satisfy more than a Fredholm property, because $x\mapsto \exp(\pm \i k\,  \x_{1,2})$ are solutions to a homogeneous surface scattering problem from the plane $\{ \x_3 = 0 \}$ with Neumann boundary conditions.
%
\end{remark}
 
%
%

We finally note a simple uniqueness result for $\theta_\Lambda$ with positive imaginary part. 

\begin{lemma}\label{th:uniqueAlpha}
If $\theta_\Lambda \in L^\infty(\Gamma)$ satisfies $\Im \theta_\Lambda >0$ on an open, non-empty subset of $\Gamma_\Lambda$, then~\eqref{eq:heAlpha} is uniquely solvable for all $(\talpha,k) \in \Wast\times\R_{>0}$.  
\end{lemma}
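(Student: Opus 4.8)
The statement is about the fixed-$(\talpha,k)$ variational problems associated with the sesquilinear forms $a_{k,\talpha}$ from~\eqref{eq:varFormXXX}, together with the coupled problem~\eqref{eq:heAlpha}. The plan is to use that each $a_{k,\talpha}$ satisfies a G\r{a}rding inequality on $\HH^1_\talpha(\Omega_H^\Lambda)$ (as recalled above), so the operator it induces is a compact perturbation of a coercive one and hence Fredholm of index zero. It therefore suffices to prove injectivity for every $(\talpha,k) \in \Wast \times \R_{>0}$, that is, that $a_{k,\talpha}(w_\talpha,\cdot) = 0$ implies $w_\talpha = 0$; unique solvability then follows from the Fredholm alternative, and the right-hand side $\J_\Gamma f(\talpha,\cdot)$ is admissible since $f \in \HH^{-1/2}_r(\Gamma)$ gives $\J_\Gamma f \in \HH^r_\0(\Wast;\HH^{-1/2}_\talpha(\Gamma_\Lambda))$ by Theorem~\ref{th:Bloch}.

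For injectivity, let $w_\talpha \in \HH^1_\talpha(\Omega_H^\Lambda)$ solve $a_{k,\talpha}(w_\talpha,v_\talpha) = 0$ for all $v_\talpha \in \HH^1_\talpha(\Omega_H^\Lambda)$. Choosing $v_\talpha = w_\talpha$ and taking imaginary parts removes the real volume contribution $\int_{\Omega_H^\Lambda}(|\nabla w_\talpha|^2 - k^2 |w_\talpha|^2)\d{\x}$. Expanding $w_\talpha|_{\Gamma_H^\Lambda} = \sum_{\j} \hat w_\talpha(\j)\, e^{\i(\LambdaAst\j+\talpha)\cdot\tx}$ and inserting the definition of $T_\talpha^+$ gives $\int_{\Gamma_H^\Lambda} T_\talpha^+(w_\talpha)\,\overline{w_\talpha}\dS = \i\,|\det\Lambda| \sum_{\j} \beta_\j(\talpha,k)\,|\hat w_\talpha(\j)|^2$, whose imaginary part equals $|\det\Lambda| \sum_{|\LambdaAst\j+\talpha|<k} \beta_\j(\talpha,k)\,|\hat w_\talpha(\j)|^2 \ge 0$, only the finitely many propagating modes entering. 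Combining this definite sign with the standing sign condition on $\Im\theta_\Lambda$, the identity $\Im a_{k,\talpha}(w_\talpha,w_\talpha) = 0$ forces $\int_{\Gamma_\Lambda} \Im\theta_\Lambda\,|w_\talpha|^2 \dS = 0$ (and, as a byproduct, $\hat w_\talpha(\j) = 0$ for every propagating $\j$). Since $\Im\theta_\Lambda > 0$ almost everywhere on the non-empty open subset $U \subset \Gamma_\Lambda$ of the hypothesis, the Dirichlet trace of $w_\talpha$ vanishes on $U$.

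To upgrade this to $w_\talpha \equiv 0$ I would invoke unique continuation. Testing $a_{k,\talpha}(w_\talpha,\cdot)=0$ against functions supported near $U$ shows that $\Delta w_\talpha + k^2 w_\talpha = 0$ in $\Omega_H^\Lambda$ and that the natural boundary identity $\partial w_\talpha / \partial \bm\nu = \theta_\Lambda\, w_\talpha$ holds in $\HH^{-1/2}$ along $\Gamma_\Lambda$; since $w_\talpha|_U = 0$, its conormal derivative also vanishes on $U$. Now take a ball $B$ meeting $\Gamma$ only inside $U$ and extend $w_\talpha$ by zero across $\Gamma$ within $B$. The vanishing of both Cauchy data on $U$ makes this extension an $\HH^1(B)$ function solving the constant-coefficient Helmholtz equation weakly in all of $B$; interior regularity renders it real-analytic in $B$, and since it vanishes on the non-empty open portion of $B$ lying on the side of $\Gamma$ opposite to $\Omega_H^\Lambda$, it vanishes on all of $B$. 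Hence $w_\talpha$ vanishes on a non-empty open subset of the connected domain $\Omega_H^\Lambda$, and real-analyticity of $w_\talpha$ in the interior of $\Omega_H^\Lambda$ gives $w_\talpha \equiv 0$. This proves injectivity, hence unique solvability of every $(\talpha,k)$-problem.

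For the coupled problem~\eqref{eq:heAlpha} — which decouples into the pointwise $\talpha$-problems upon testing against $\talpha\mapsto \chi(\talpha)\,e^{\i\talpha\cdot\tx}\tilde v$ with $\LambdaAst$-periodic weights $\chi$ and fixed periodic $\tilde v$ — one conjugates by the unitary multiplication $w_\talpha \mapsto e^{-\i\talpha\cdot\tx} w_\talpha$, which identifies $\HH^1_\talpha(\Omega_H^\Lambda)$ with the single space $\HH^1_\0(\Omega_H^\Lambda)$ and turns $\{a_{k,\talpha}\}_\talpha$ into a family of bounded sesquilinear forms on $\HH^1_\0(\Omega_H^\Lambda)$ that depends continuously on $\talpha \in \overline{\Wast}$ (polynomially in the volume and impedance parts, continuously in the $T_\talpha^+$-part because $\talpha \mapsto \beta_\j(\talpha,k)$ is continuous uniformly in $\j$). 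All of these operators being invertible by the previous step, openness of the set of invertible operators and compactness of $\overline{\Wast}$ give $\sup_{\talpha \in \overline{\Wast}} \|a_{k,\talpha}^{-1}\| < \infty$, so the pointwise solutions $w_\talpha$ assemble into the unique $w \in \HH^r_\0(\Wast;\HH^1_\talpha(\Omega_H^\Lambda))$ solving~\eqref{eq:heAlpha} (cf.\ Theorem~\ref{th:BlochOmega}). I expect the main obstacle to be the unique continuation step — justifying the zero-extension across the merely Lipschitz surface $\Gamma$ and the propagation of vanishing Cauchy data into the interior — together with keeping the sign bookkeeping in the energy identity under control so that the hypothesis on $\Im\theta_\Lambda$ truly closes the argument.
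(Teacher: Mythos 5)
Your overall route is exactly the paper's: take the imaginary part of $a_{k,\talpha}(w_\talpha,w_\talpha)$ for a homogeneous solution, deduce that the Cauchy data vanish on the open set where $\Im\theta_\Lambda>0$, conclude $w_\talpha\equiv0$ by unique continuation (the paper invokes Holmgren's lemma; you give the zero-extension/real-analyticity version, which is the right way to handle the merely Lipschitz surface), and obtain existence from the Fredholm alternative via the G\r{a}rding inequality. Your final paragraph assembling the fixed-$\talpha$ problems into~\eqref{eq:heAlpha} through a uniform bound on the inverses is also the argument the paper uses later, in the proof of Theorem~\ref{th:exiSolScalPerio}. Your write-up is far more detailed than the paper's two-line proof, and most of it is sound.

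However, the central energy step does not close as you state it, and this is a genuine gap. You correctly compute
\[
\Im\int_{\Gamma_H^\Lambda}T_\talpha^+(w_\talpha)\,\overline{w_\talpha}\dS
=|\det\Lambda|\sum_{|\LambdaAst\j+\talpha|<k}\beta_\j(\talpha,k)\,|\hat w_\talpha(\j)|^2\;\ge\;0,
\]
but this term enters $a_{k,\talpha}$ with a \emph{minus} sign, while $\int_{\Gamma_\Lambda}\Im\theta_\Lambda|w_\talpha|^2\dS$ enters with a \emph{plus} sign. With the standing assumption $\Im\theta_\Lambda\ge0$, the identity $\Im a_{k,\talpha}(w_\talpha,w_\talpha)=0$ therefore reads
\[
\int_{\Gamma_\Lambda}\Im\theta_\Lambda\,|w_\talpha|^2\dS
=|\det\Lambda|\sum_{|\LambdaAst\j+\talpha|<k}\beta_\j(\talpha,k)\,|\hat w_\talpha(\j)|^2,
\]
an equality of two nonnegative quantities (boundary input balancing outgoing radiation), from which you cannot conclude that either side vanishes. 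The step closes only if the two contributions carry the same sign --- for instance if the hypothesis were $\Im\theta_\Lambda\le0$ with strict inequality on an open set, or equivalently if the boundary condition were $\partial u/\partial\bm{\nu}+\theta_\Lambda u=f$ with $\bm{\nu}$ as stated. This sign tension is arguably inherited from the paper's own conventions (its proof is a one-line sketch that never displays the computation), but since you yourself flagged the sign bookkeeping as the crux and then asserted the conclusion anyway, you must either repair the convention or supply a different uniqueness argument; everything downstream --- vanishing trace on $U$, hence vanishing conormal derivative via the boundary condition, zero extension across $\Gamma$, analyticity, the Fredholm alternative, and the assembly over $\talpha$ --- is fine once this step is secured.
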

\begin{proof}
Computing the imaginary part of $a_{k,\talpha}(v,v)$ for some solution $v$ to the homogeneous problem corresponding to~\eqref{eq:heAlpha}, i.e., for $\J_\Gamma f (\talpha,\cdot) = 0$, shows that the Cauchy data of $v$ vanishes on $\Gamma_\Lambda$. 
Thus, Holmgren's lemma implies the claim. 
\end{proof}

\section{Regularity and decay estimates}\label{se:regDec}
In this section, we show a regularity result for the solution $w$ to the periodic problem~\eqref{eq:heAlpha} in $\talpha$ that yields decay of the solution to a scattering problem~\eqref{eq:varFormHEScal} for particular incident Herglotz wave functions.
To this end, let us introduce the spaces $\WW^{1,p}_\0(\Wast; \HH^1_\talpha(\Omega_H^\Lambda))$ for $1 \leq p < \infty$, as described in Remark~\ref{rem:X} as the space of those distributions in $\mathcal{D}'(\R^2 \times \Omega_H)$ that are $\LambdaAst$-periodic in their first and $\talpha$-quasiperiodic in their second argument, and possess finite norm
\[
  \| w \|_{\WW^{1,p}_\0(\Wast; \HH^1_\talpha(\Omega_H^\Lambda))}
  := \bigg[ \int_{\Wast} \hspace*{-2pt} \bigg[ \| w(\talpha,\cdot) \|_{\HH^1_\talpha(\Omega_H^\Lambda))}^p  + \sum_{j=1,2} \| \partial_{\talpha_j} w(\talpha,\cdot) \|_{\HH^1_\talpha(\Omega_H^\Lambda))}^p \bigg] \hspace*{-2pt} \d{\talpha} \bigg]^{1/p} < \infty.
\]

\begin{theorem} \label{th:exiSolScalPerio}
Assume that the problem to find $w_\talpha \in \H^1_\alpha(\Omega_H^\Lambda)$ with 
\begin{equation} \label{eq:uniquenessAlpha}
  a_{k,\talpha}(w_\talpha,v_\talpha)=0 \qquad \text{for all $v_\talpha \in \H^1_\alpha(\Omega_H^\Lambda)$}
\end{equation}
is merely solved by the trivial solution $w_\talpha = 0$ for each $\talpha \in \Wast$. 

(a) If $f \in \HH^{-1/2}(\Gamma)$, then the solution $w$ to~\eqref{eq:heAlpha} belongs to $L^2(\Wast; \HH^1_\talpha(\Omega_H^\Lambda))$ and the solution $u = \J_{\Omega}^{-1} w$ to~\eqref{eq:varFormHEScal} belongs to $\HH^{1}(\Omega_H^\Lambda)$. 

(b) If $f \in \HH^{-1/2}(\Gamma)$ satisfies  $\J_{\Gamma} f \in \WW^{1,p}_\0(\Wast; \H^{-1/2}_\talpha(\Gamma_\Lambda))$ as well as $\sup_{\talpha \in \Wast} \| \J_{\Gamma} f(\talpha, \cdot) \|_{\H^{-1/2}_\talpha(\Gamma_\Lambda)} < \infty$, then the solution $w$ to~\eqref{eq:heAlpha} belongs to $\WW^{1,p}_\0(\Wast; \HH^1_\talpha(\Omega_H^\Lambda))$ for $1\leq p < 2$.
\end{theorem}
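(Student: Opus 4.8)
The plan is to build the solution $w$ by solving the decoupled $\talpha$-quasiperiodic problems from~\eqref{eq:varFormXXX} pointwise in $\talpha$, to prove a bound on the solution operator that is \emph{uniform} over the Brillouin zone, and then to differentiate this family in $\talpha$. It is convenient to replace $w(\talpha,\cdot)$ by the $\LambdaAst$-periodic-valued function $\widetilde w(\talpha,\cdot):=e^{-\i\talpha\cdot\tx}w(\talpha,\cdot)$, so that all quasiperiodic spaces become the single $\talpha$-independent periodic space $\HH^1_\0(\Omega_H^\Lambda)$ and~\eqref{eq:varFormXXX} becomes a family $\talpha\mapsto\widetilde a_{k,\talpha}$ of bounded sesquilinear forms on it. After this substitution the volume term depends polynomially on $\talpha$ (through $\nabla\widetilde w+\i(\talpha_1,\talpha_2,0)^\top\widetilde w$), the impedance term is $\talpha$-independent, and the only non-smooth dependence is carried by the Dirichlet-to-Neumann term through the factors $\beta_\j(\talpha,k)=\sqrt{k^2-|\LambdaAst\j+\talpha|^2}$. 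Because $|\LambdaAst\j|\to\infty$, only finitely many of the circles $\{\,\talpha:|\LambdaAst\j+\talpha|=k\,\}$ meet the compact cell $\overline{\Wast}$; let $\mathcal A\subset\overline{\Wast}$ denote their union, a finite family of circular arcs --- the Rayleigh--Wood anomalies. Since each $\beta_\j$ extends continuously across $\mathcal A$ (it vanishes there), $\talpha\mapsto\widetilde a_{k,\talpha}$ is continuous on $\overline{\Wast}$ and (real-)analytic on $\overline{\Wast}\setminus\mathcal A$.

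For each fixed $\talpha$ the form $\widetilde a_{k,\talpha}$ satisfies a G\r{a}rding inequality~\cite{Bonne1994,Elsch1998}, so the injectivity hypothesis~\eqref{eq:uniquenessAlpha} makes it invertible, with bounded solution operator $S_\talpha$ on $\HH^1_\0(\Omega_H^\Lambda)$. Continuity of $\talpha\mapsto\widetilde a_{k,\talpha}$, the identity $S_\talpha-S_{\talpha_0}=S_\talpha(\widetilde a_{k,\talpha_0}-\widetilde a_{k,\talpha})S_{\talpha_0}$, and compactness of $\overline{\Wast}$ then give $C_0:=\sup_{\talpha\in\Wast}\|S_\talpha\|<\infty$. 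Setting $\widetilde G(\talpha,\cdot):=e^{-\i\talpha\cdot\tx}\J_\Gamma f(\talpha,\cdot)$, $\widetilde w(\talpha,\cdot):=S_\talpha\widetilde G(\talpha,\cdot)$ and $w(\talpha,\cdot):=e^{\i\talpha\cdot\tx}\widetilde w(\talpha,\cdot)$ yields $\|w(\talpha,\cdot)\|_{\HH^1_\talpha(\Omega_H^\Lambda)}\le C\,\|\J_\Gamma f(\talpha,\cdot)\|_{\HH^{-1/2}_\talpha(\Gamma_\Lambda)}$ for every $\talpha$, with $C$ depending only on $\Lambda$ and $C_0$ (twisting by $e^{\pm\i\talpha\cdot\tx}$ changes norms only by $\Lambda$-dependent factors since $\tx$ is bounded on $\W$). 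For part~(a) we square this and integrate over $\Wast$; invoking Theorem~\ref{th:Bloch} for the right-hand side ($s=-1/2$, $r=0$) shows $w\in L^2(\Wast;\HH^1_\talpha(\Omega_H^\Lambda))$. Testing~\eqref{eq:heAlpha} against product functions and using density shows this $w$ solves~\eqref{eq:heAlpha}; by the uniqueness furnished by~\eqref{eq:uniquenessAlpha} it is \emph{the} solution, and Theorem~\ref{th:equiScalarPerio} together with Theorem~\ref{th:BlochOmega} gives that $u=\J_\Omega^{-1}w$ solves~\eqref{eq:varFormHEScal} and lies in $\HH^1(\Omega_H)$.

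For part~(b), on $\overline{\Wast}\setminus\mathcal A$ the map $\talpha\mapsto S_\talpha$ is differentiable, so differentiating the identity $\widetilde a_{k,\talpha}(\widetilde w(\talpha,\cdot),\cdot)=\widetilde G(\talpha,\cdot)$ gives, for $l\in\{1,2\}$ and a.e.\ $\talpha\in\Wast$,
\[
  \partial_{\talpha_l}\widetilde w(\talpha,\cdot)
  = S_\talpha\Big[\partial_{\talpha_l}\widetilde G(\talpha,\cdot)-\big(\partial_{\talpha_l}\widetilde a_{k,\talpha}\big)\big(\widetilde w(\talpha,\cdot),\cdot\big)\Big].
\]
Here $\partial_{\talpha_l}\widetilde G(\talpha,\cdot)=-\i\tx_l\,\widetilde G(\talpha,\cdot)+e^{-\i\talpha\cdot\tx}\partial_{\talpha_l}\J_\Gamma f(\talpha,\cdot)$ is controlled in $L^p(\Wast;\HH^{-1/2}_\0(\Gamma_\Lambda))$ by the Leibniz rule, the boundedness of $\tx$ on $\Gamma_\Lambda$, and the hypothesis $\J_\Gamma f\in\WW^{1,p}_\0(\Wast;\HH^{-1/2}_\talpha(\Gamma_\Lambda))$; and $\|\widetilde w(\talpha,\cdot)\|_{\HH^1_\0(\Omega_H^\Lambda)}\le C_0\sup_{\talpha}\|\J_\Gamma f(\talpha,\cdot)\|_{\HH^{-1/2}_\talpha(\Gamma_\Lambda)}$ is \emph{uniformly} bounded by the remaining hypothesis. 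It remains to bound $\partial_{\talpha_l}\widetilde a_{k,\talpha}$ as a form on $\HH^1_\0(\Omega_H^\Lambda)$: its volume/impedance part is smooth and bounded on $\overline{\Wast}$, while $\partial_{\talpha_l}\beta_\j(\talpha,k)=-(\LambdaAst\j+\talpha)_l\,\beta_\j(\talpha,k)^{-1}$ and $|\beta_\j(\talpha,k)|\simeq\big|\,|\LambdaAst\j+\talpha|-k\,\big|^{1/2}$ is comparable to $\operatorname{dist}(\talpha,\mathcal A)^{1/2}$ near $\mathcal A$, so that (only finitely many $\j$ being relevant, and $|\partial_{\talpha_l}\beta_\j|\lesssim 1+|\j|$ otherwise) one gets $\|\partial_{\talpha_l}\widetilde a_{k,\talpha}\|\le C\big(1+\operatorname{dist}(\talpha,\mathcal A)^{-1/2}\big)$. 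Since $\mathcal A$ is a finite union of arcs, $\operatorname{dist}(\cdot,\mathcal A)^{-1/2}\in L^p(\Wast)$ exactly when $p<2$; combining the three bounds yields $\partial_{\talpha_l}\widetilde w\in L^p(\Wast;\HH^1_\0(\Omega_H^\Lambda))$, and undoing the twist gives $w\in\WW^{1,p}_\0(\Wast;\HH^1_\talpha(\Omega_H^\Lambda))$ for every $1\le p<2$.

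I expect the two genuinely delicate points to be the uniform invertibility $\sup_\talpha\|S_\talpha\|<\infty$ --- which rests on the continuous dependence of the Dirichlet-to-Neumann term on $\talpha$ across the anomaly set, the pointwise hypothesis~\eqref{eq:uniquenessAlpha}, and compactness, and where the $\talpha$-dependence of the function spaces must be handled (the twist) --- and the precise quantification of the $\operatorname{dist}(\cdot,\mathcal A)^{-1/2}$ blow-up of $\partial_\talpha\widetilde a_{k,\talpha}$ at the Rayleigh--Wood frequencies, which is exactly what pins the integrability threshold at $p<2$ and blocks $p=2$.
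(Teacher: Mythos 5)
Your proposal is correct and follows essentially the same route as the paper: uniform invertibility of the quasiperiodic family over the compact Brillouin zone via the G\r{a}rding inequality, Fredholm theory, continuity in $\talpha$ and the pointwise injectivity hypothesis, followed by differentiation in $\talpha$, with the $\mathrm{dist}(\cdot\,, \{|\LambdaAst\j+\talpha|=k\})^{-1/2}$ blow-up of $\partial_{\talpha}\beta_\j$ along the finitely many Wood-anomaly circles pinning the integrability threshold at $p<2$ exactly as in the paper's estimate. The only (harmless) technical variations are your periodization twist by $e^{-\i\talpha\cdot\tx}$ to work on a single $\talpha$-independent space, and your control of the tail of $\partial_{\talpha}T^+_\talpha$ directly through its uniformly bounded symbol $(\LambdaAst\j+\talpha)_{1,2}/\beta_\j$, where the paper instead invokes interior elliptic regularity on $\Gamma_H$ to gain decay of the Fourier coefficients $\hat{w}(\talpha,\j)$ before summing the series.
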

\begin{remark}\label{th:chandler}
(a) If $f \in \HH^{-1/2}_r(\Gamma)$ with $r>1$, then $\mathcal{J}f \in \HH^r_\0(\Wast; \H^{-1/2}_\talpha(\Gamma_\Lambda))$ such that Sobolev's embedding theorem states that $\talpha \mapsto \mathcal{J}f(\talpha,\cdot)$ is continuous, and in particular bounded, such that $\max_{\talpha \in \Wast} \| \mathcal{J}f(\talpha,\cdot) \|_{\HH^{-1/2}_\talpha(\Gamma_\Lambda)} < \infty$ is finite. 
In the end of this section, we state an example for such a boundary term due to scattering of a Herglotz wave from~$\Gamma$.  

  
(b) The proof of Theorem~\ref{th:exiSolScalPerio}(b) indicates that the claimed range in $p$ is strict. 
This relates to a result due to Chandler-Wilde and Elschner from~\cite{Chand2010}, showing that the Dirichlet problem corresponding to~\eqref{eq:varFormHEScal} generally fails to possess unique solutions in $\HH^{1}_r(\Omega_H^\Lambda)$ for boundary data in $\HH^{1/2}_r(\Gamma_\Lambda)$ with $|r|\geq 1$.
\end{remark}
\begin{proof}
The sesquilinear form $a_{k,\talpha}$ is independent of $\talpha$ except for the term 
\[
  (w(\talpha,\cdot),v) \mapsto \int_{\Gamma_H} T_\talpha^+ w(\talpha, \cdot) \, \overline{v} \dS 
  = \i \sum_{\j\in\Z^2} \sqrt{k^2 - |\LambdaAst \j +\talpha|^2} \, \hat{w}(\talpha,\j) \overline{\hat{v}(\j)},
\]  
which is, however, continuous in $\talpha$ as we show now:  
For $\j\in\Z^2$ such that $|\LambdaAst \j + \talpha| \geq 2k$, the square roots $\talpha \mapsto \sqrt{k^2 - |\talpha + \LambdaAst \j |^2}$ are infinitely smooth functions on $\R^2$.  
For the remaining finitely many $\j$, the roots $\talpha \mapsto \sqrt{k^2 - |\LambdaAst \j + \talpha|^2}$ are all continuous on $\R^2$, such that the convergence of the series defining $T_\talpha^+$ in the operator norm of $\mathcal{L}(\HH^{1/2}_\talpha(\Gamma_H^\Lambda),\HH^{-1/2}_\talpha(\Gamma_H^\Lambda))$ ensures that $(w(\talpha,\cdot),v) \mapsto (T_\talpha^+ w(\talpha, \cdot),v)$ and $a_{k,\talpha}$ both depend continuously on $\talpha \in \Wast$.  
The continuous operator $A_{k,\talpha}: \, \HH^1_\talpha(\Omega_H^\Lambda) \to \big[ \HH^1_\talpha(\Omega_H^\Lambda)  \big]^\ast$, associated to $a_{k,\talpha}$ by $\langle A_{k,\talpha} w, v \rangle = a_{k,\talpha} (w,v)$ for all $v,w \in \HH^1_\talpha(\Omega_H^\Lambda)$, is hence continuous in $\talpha \in \Wast$. 
(Here $\langle \cdot \,,\, \cdot \rangle$ denotes the anti-linear duality product between $\HH^1_\talpha(\Omega_H^\Lambda)$ and its dual.)
Due to Fredholm theory, our assumption that~\eqref{eq:uniquenessAlpha} is uniquely solvable for all $\talpha \in \Wast$ implies that $A_{k,\talpha}$ is invertible for all $\talpha \in W^\ast$.  
Both the operator and its inverse are moreover $\LambdaAst$-periodic in $\talpha \in \R^2$. 
As $A_{k,\talpha}$ depends continuously on $\talpha$, its inverse $A_{k,\talpha}^{-1}$ is continuous in $\talpha$, too. 
In particular, the operator norms $\| A_{k,\talpha}^{-1} \|$ are uniformly bounded in $\talpha \in W^\ast$.  

(a) The uniform bound for the inverses $\| A_{k,\talpha}^{-1} \|$ implies for the solution $w$ to~\eqref{eq:heAlpha} that 
\begin{align}
  \| w \|_{L^2(\W^\ast; \HH^1_\talpha(\Omega_H^\Lambda))}^2  
  & = \int_\Wast \| w(\talpha,\cdot) \|_{\HH^1_\talpha(\Omega_H^\Lambda)}^2 \d{\talpha}
  \leq C \int_\Wast \| \J_{\Gamma} f(\talpha,\cdot) \|_{\HH^{-1/2}_\talpha(\Gamma_\Lambda)}^2 \d{\talpha} \nonumber \\
  & = C \| \J_\Gamma f \|_{L^2(\Wast; \H^{-1/2}_\talpha(\Gamma_\Lambda))}^2 
  = C \| f \|_{\H^{-1/2}(\Gamma)}^2.   \label{eq:aux1088}
\end{align}
(b) 
For simplicity, we abbreviate in this part $\HH^{-1/2}_\talpha(\Gamma_\Lambda)$ by $\HH^{-1/2}_\talpha$. 
As in~\eqref{eq:aux1088}, the uniform bound for the inverses $A_{k,\talpha}^{-1}$ implies by H\"older's inequality that $\| w \|_{\WW^{1,p}_\0(\Wast; \HH^1_\talpha(\Omega_H^\Lambda)} \leq C(p) \| \J_{\Gamma} f \|_{L^2(\Wast; \HH^{-1/2}_\talpha)}$. 
Further, the extension of $w(\talpha, \cdot)$ to $\Omega$, see~\eqref{eq:URCAlpha}, solves the Helmholtz equation with constant coefficients in $\Omega$, such that standard elliptic regularity results imply that $w(\talpha, \cdot)|_{\Gamma_H^\Lambda} \in \HH^s_\talpha(\Gamma_H^\Lambda)$ for arbitrary $s\geq 0$.  
Additionally, for all $s \geq 0$ there is $C(s)>0$ such that $\sum_{\j\in\Z^2} (1+|\j|^2)^s | \hat{w}(\talpha, \j)|^2 \leq C(s) \| w(\talpha,\cdot)\|_{\HH^1_\alpha(\Omega_H^\Lambda)}^2$ (where $\hat{w}(\talpha, \j)$ again denotes the $\j$th Fourier coefficient of $w(\talpha, \cdot)|_{\Gamma_H^\Lambda}$, see~\eqref{eq:URCAlpha}). 
In consequence, $| \hat{w}(\talpha, \j)|^2 \leq C \, (1+|\j|^2)^{-2}  \| w(\talpha,\cdot)\|_{\HH^1_\alpha(\Omega_H^\Lambda)}^2$ for all $\j\in\Z^2$ and $\talpha \in \Wast$. 

The solution $w$ to~\eqref{eq:heAlpha} possesses distributional derivatives $w'_{1,2} = \partial_{\talpha_{1,2}} w(\talpha,\cdot)$ with respect to $\talpha_{1,2}$ that satisfy $a_{k,\talpha} (w'_{1,2},v) = F_{\talpha_{1,2}} (v)$ for all $v\in \H^1_\talpha(\Omega_H^\Lambda)$, where  
\[  
  F_{\talpha_{1,2}} (v) 
  = \int_{\Gamma_\Lambda} \partial_{\talpha_{1,2}} \big(\J_{\Gamma} f \big)(\talpha,\cdot) \, \overline{v} \dS 
  + \i \sum_{\j\in\Z^2} \frac{(\LambdaAst \j + \talpha)_{1,2}}{\sqrt{k^2 - |\LambdaAst\j + \talpha |^2}} \, \hat{w}(\talpha,\j) \overline{\hat{v}(\j)}. \label{eq:RHSDeriAlpha}
\]
(Again, $\hat{v}(\j)$ denotes the $\j$th Fourier coefficient of the restriction of $v$ to $\Gamma_H^\Lambda$.)
This right-hand side is well-defined and bounded for $\talpha \in \Wast \setminus E_\LambdaAst$ where $E_\LambdaAst = \{ \talpha \in \Wast: \, |\LambdaAst\j + \talpha | = k$ for some $\j \in \Z^2 \}$.
Note that the equation $|\LambdaAst\j + \talpha | = k$ can only be satisfied for finitely many $\j \in\Z^2$, e.g., for those in $I = \{ \j\in\Z^2: \, |\LambdaAst \j| \leq k + |\LambdaAst| \}$.
The set $E_\LambdaAst$ hence consists of finitely many one-dimensional hypersurfaces in $\Wast$ of finite length. 
As solutions $\bm\xi \in \R^2$ to $|\LambdaAst\j + \bm\xi | = k$ lie on a circle centered at $\LambdaAst\j$, the set $E_\LambdaAst$ moreover is a union of smooth curves that are contained in a finite union of circles. 
As 
\[
  \frac{\big| (\LambdaAst \j + \talpha)_{1,2} \big|}{| k^2 - |\LambdaAst \j + \talpha|^2 |^{1/2}} 
  \leq 
  \frac{|\LambdaAst \j + \talpha|}{| k^2 - |\LambdaAst \j + \talpha|^2 |^{1/2}} 
  \to 1  
  \quad \text{for } |\j| \to \infty 
\]
holds for all $\talpha \in \Wast \setminus E_{\LambdaAst}$, we conclude for all $v\in \HH^1_\talpha(\Omega_H^\Lambda)$ with $\| v \|_{\HH^1_\talpha(\Omega_H^\Lambda)} = 1$ that  
\begin{align*}
  \big| & F_{\talpha_{1,2}} (v) \big|
  \leq 
  \| \partial_{\talpha_{1,2}} \J_{\Gamma} f (\talpha,\cdot) \|_{\HH^{-1/2}_\talpha} +  \sum_{\j\in\Z^2} \frac{|\LambdaAst \j + \talpha| \, (1+|\j|^2)^{-2} \| w(\talpha,\cdot) \|_{\HH^1_\talpha(\Omega_H^\Lambda)}}{| k^2 - |\LambdaAst \j + \talpha|^2 |^{1/2}}  \d{\talpha}   \\
   & \leq \| \partial_{\talpha_{1,2}} \J_{\Gamma} f (\talpha,\cdot) \|_{\HH^{-1/2}_\talpha}  + \bigg( \sum_{\j\in I} 
   +   \sum_{\j\in\Z^2\setminus I} \bigg) \frac{|\LambdaAst \j + \talpha| \, (1+|\j|^2)^{-2} }{| k^2 - |\LambdaAst \j + \talpha|^2 |^{1/2}} 
  \| \J_{\Gamma} f (\talpha,\cdot) \|_{\HH^{-1/2}_\talpha}  \\
  & \leq \| \partial_{\talpha_{1,2}} \J_{\Gamma} f (\talpha,\cdot) \|_{\HH^{-1/2}_\talpha}  + \bigg(\sum_{\j\in I} \frac{C_1(\Lambda,k)}{| k^2 - |\LambdaAst \j + \talpha|^2 |^{1/2}}
   + C_2(\Lambda,k) \bigg)\|  \J_{\Gamma} f (\talpha,\cdot) \|_{\HH^{-1/2}_\talpha}
\end{align*}
with constants $C_{1,2}(\Lambda,k)$ independent of $\talpha \in\Wast \setminus E_\LambdaAst$ and $f$. 
As the right-hand side $F_{\talpha_{1,2}}$ is well-defined for all $\talpha \in \Wast \setminus E_\LambdaAst$, and as the solutions operators $A_{k,\talpha}^{-1}$ are uniformly bounded in $\talpha \in \Wast$, there holds $\| w'_{1,2}(\talpha,\cdot) \|_{\HH^1_\talpha(\Omega_H^\Lambda)} \leq C \| F_{\talpha_{1,2}} \|_{\HH^1_\talpha(\Omega_H^\Lambda)'}$ for all $\talpha \in \Wast \setminus E_\LambdaAst$.
By Jensen's inequality, $(a_1+ \dots+a_n)^p \leq n^{p-1} (a_1^p+ \dots+a_n^p)$ holds for $a_1, \dots ,a_n \geq 0$ and shows that 
\begin{align}
  &\| w'_{1,2}   \|_{L^p(\Wast; \HH^1_\talpha(\Omega_H^\Lambda))}^p  
  = \int_\Wast \| w'_{1,2}(\talpha, \cdot ) \|_{\HH^1_\talpha(\Omega_H^\Lambda)}^p \d{\talpha} 
  \leq C \int_\Wast \big| F_{\talpha_{1,2}}\big|_{[\HH^1_\talpha(\Omega_H^\Lambda)]^\ast}^p \d{\talpha} \label{eq:aux1146} \\
  & \leq C \left[  \| \J_{\Gamma} f \|_{\WW^{1,p}_\0(\Wast; \HH^1_\talpha(\Omega_H^\Lambda))}^p 
  + \int_\Wast \bigg[ \sum_{\j\in I} \frac{C_1(\Lambda,k)^p}{| k^2 - |\LambdaAst \j + \talpha|^2 |^{p/2}}  \bigg]
  \| \J_{\Gamma} f (\talpha,\cdot) \|_{\HH^{-1/2}_\talpha}^p \d{\talpha} \right]  \nonumber \\
   & \leq C \left[ \| \J_{\Gamma} f \|_{\WW^{1,p}_\0(\Wast; \HH^1_\talpha(\Omega_H^\Lambda))}^p
    + \sum_{\j\in I} \int_\Wast \frac{1}{| k^2 - |\LambdaAst \j + \talpha|^2 |^{p/2}} \d{\talpha}  \right] \hspace*{-1pt} \sup_{\talpha \in \Wast} \| \J_{\Gamma}f(\talpha,\cdot) \|_{\HH^{-1/2}_\talpha}^p. \nonumber
\end{align}
(Note that all series have merely finitely many terms, such that the above inequality can indeed be applied.)
The integrand in the last expression is singular at all points in $E_\LambdaAst$, which contains finitely many pieces of one-dimensional circles. 
The singularity of 
\[
  \talpha \mapsto \frac{1}{| k^2 - |\LambdaAst \j + \talpha|^2 |^{p/2}}
  = \frac{1}{[k+|\LambdaAst \j + \talpha|]^{p/2}} \frac{1}{|k-|\LambdaAst \j + \talpha||^{p/2}}, 
  \quad
  \talpha \in \Wast \setminus E_{\LambdaAst},
\]
is of the order $p/2$, which is strictly less than one as $1\leq p < 2$. 
Each hypersurface $E_\LambdaAst^{(\j,\delta)} = \{ \bm\xi \in \Wast : \, |\LambdaAst \j + \bm\xi| = k+\delta \}$ for $\delta \geq 0$ and $\j$ such that $|\LambdaAst \j| \leq 2k + |\LambdaAst|$ is at least piecewise Lipschitz smooth and possesses finite surface area bounded by some constant times the surface area of $E_\LambdaAst$. 
As the union of all the hypersurfaces over $0 \leq \delta \leq R_0$ and $\j$ such that $|\LambdaAst \j| \leq 2k + |\LambdaAst|$ covers $\Wast$ if $R_0>0$ is chosen large enough, we estimate that 
\begin{align*} 
  \int_\Wast \frac{1}{| k - |\LambdaAst \j + \talpha| |^{p/2}} \d{\talpha}
  & \leq \sum_{\j\in I} \int_0^{R_0} \int_{E_\LambdaAst^{(\j,\delta)}} \delta^{-p/2} \dS{\talpha} \d{\delta} 
  = \sum_{\j\in I} \big| E_\LambdaAst^{(\j,\delta)} \big| \frac{R_0^{1-p/2}}{1-p/2}  < \infty.
\end{align*}
Thus, the integral $\int_\Wast | k^2 - |\LambdaAst \j + \talpha|^2 |^{-p/2} \d{\talpha}$ from~\eqref{eq:aux1146} takes a finite value, such that $w'_{1,2} \in L^p(\Wast; \HH^1_\talpha(\Omega_H^\Lambda))$ and $w \in \WW^{1,p}_\0(\Wast; \HH^1_\talpha(\Omega_H^\Lambda))$.
\end{proof}

Estimate~\eqref{eq:aux1146} in the last proof shows that a sufficient condition for the $w_{1,2}'$ to be square-integrable in $\talpha$ is that $\talpha \mapsto J_\Gamma f$ vanishes up to sufficiently high order for $\talpha \in E_\LambdaAst$. 

\begin{corollary}\label{th:coconut}
Suppose that the assumptions of Theorem~\ref{th:exiSolScalPerio}(b) are satisfied.
If, additionally, $\talpha \mapsto \J_\Gamma f(\talpha,\cdot)$ vanishes in a neighborhood of the $\LambdaAst$-periodic set $\{ \bm\xi \in \R^2: \, \exists \, \j \in \Z^2 \text{ s.th. } |\LambdaAst \j + \bm\xi | = k \}$, then $w$ belongs to $\HH^1_\0(\Wast; \HH^1_\talpha(\Omega_H^\Lambda))$ and the solution $u$ to the surface scattering problem~\eqref{eq:varFormHEScal} belongs to $\HH^1_1(\Omega_H)$.   
\end{corollary}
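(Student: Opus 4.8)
The plan is to re-run the reasoning behind Theorem~\ref{th:exiSolScalPerio}(b) in the borderline case $p=2$, using the extra vanishing hypothesis precisely to remove the only contribution that forced $p<2$ there. Recall from that proof that $E_\LambdaAst=\{\talpha\in\Wast:\,|\LambdaAst\j+\talpha|=k\text{ for some }\j\in\Z^2\}$ is a finite union of one-dimensional circular arcs, that the operator $A_{k,\talpha}$ associated with $a_{k,\talpha}$ is boundedly invertible with $\sup_{\talpha\in\Wast}\|A_{k,\talpha}^{-1}\|<\infty$, and that the solution of~\eqref{eq:heAlpha} is $w(\talpha,\cdot)=A_{k,\talpha}^{-1}\big(\J_\Gamma f(\talpha,\cdot)\big)$. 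First I would fix an open $\LambdaAst$-periodic neighbourhood $U$ of $\{\bm\xi\in\R^2:\,\exists\,\j\in\Z^2,\ |\LambdaAst\j+\bm\xi|=k\}$ on which $\J_\Gamma f(\talpha,\cdot)\equiv 0$. Then $w(\talpha,\cdot)=0$, and hence all Fourier coefficients $\hat w(\talpha,\j)$ vanish, for every $\talpha\in U$.

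Next I would look at the equation $a_{k,\talpha}(w'_{1,2},v)=F_{\talpha_{1,2}}(v)$ satisfied for a.e.\ $\talpha$ by the weak $\talpha$-derivatives $w'_{1,2}=\partial_{\talpha_{1,2}}w(\talpha,\cdot)$, whose right-hand side $F_{\talpha_{1,2}}$ was computed in the proof of Theorem~\ref{th:exiSolScalPerio}(b). On $U$ both parts of $F_{\talpha_{1,2}}$ vanish: the datum part $\int_{\Gamma_\Lambda}\partial_{\talpha_{1,2}}(\J_\Gamma f)(\talpha,\cdot)\,\overline v\dS$ because $\J_\Gamma f\equiv 0$ there, and the series $\i\sum_{\j}\frac{(\LambdaAst\j+\talpha)_{1,2}}{\sqrt{k^2-|\LambdaAst\j+\talpha|^2}}\,\hat w(\talpha,\j)\,\overline{\hat v(\j)}$ because $\hat w(\talpha,\cdot)=0$ there. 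On $\Wast\setminus U$ the finitely many denominators $|k^2-|\LambdaAst\j+\talpha|^2|$ with $\j\in I=\{\j:\,|\LambdaAst\j|\le k+|\LambdaAst|\}$ are bounded below by a positive constant depending only on $U$, so the estimate from the proof of Theorem~\ref{th:exiSolScalPerio}(b) collapses, with no singular weight left, to $\|F_{\talpha_{1,2}}\|_{[\HH^1_\talpha(\Omega_H^\Lambda)]^{\ast}}\le C\big(\|\partial_{\talpha_{1,2}}\J_\Gamma f(\talpha,\cdot)\|_{\HH^{-1/2}_\talpha(\Gamma_\Lambda)}+\|\J_\Gamma f(\talpha,\cdot)\|_{\HH^{-1/2}_\talpha(\Gamma_\Lambda)}\big)$ for a.e.\ $\talpha\in\Wast\setminus U$. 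Since $\partial_{\talpha_{1,2}}\J_\Gamma f\in L^2(\Wast;\HH^{-1/2}_\talpha(\Gamma_\Lambda))$ (this is the $p=2$ instance of the hypothesis of Theorem~\ref{th:exiSolScalPerio}(b), which holds e.g.\ for $f\in\HH^{-1/2}_r(\Gamma)$ with $r>1$, cf.\ Remark~\ref{th:chandler}(a)) and $\talpha\mapsto\|\J_\Gamma f(\talpha,\cdot)\|_{\HH^{-1/2}_\talpha(\Gamma_\Lambda)}$ is bounded on the finite-measure set $\Wast$, we get $F_{\talpha_{1,2}}\in L^2(\Wast;[\HH^1_\talpha(\Omega_H^\Lambda)]^{\ast})$.

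Then I would conclude: combining the uniform bound on $A_{k,\talpha}^{-1}$ with the last step gives $\|w'_{1,2}(\talpha,\cdot)\|_{\HH^1_\talpha(\Omega_H^\Lambda)}\le C\|F_{\talpha_{1,2}}\|_{[\HH^1_\talpha(\Omega_H^\Lambda)]^{\ast}}$, hence $w'_{1,2}\in L^2(\Wast;\HH^1_\talpha(\Omega_H^\Lambda))$ for $j=1,2$. Together with $w\in L^2(\Wast;\HH^1_\talpha(\Omega_H^\Lambda))$ from Theorem~\ref{th:exiSolScalPerio}(a), and the Fourier-series characterisation identifying this space with $\HH^1_\0(\Wast;\HH^1_\talpha(\Omega_H^\Lambda))$ with equivalent norms (see Remark~\ref{rem:X}(b) and~\eqref{eq:equalNorm2}), this yields $w\in\HH^1_\0(\Wast;\HH^1_\talpha(\Omega_H^\Lambda))$. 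Finally, Theorem~\ref{th:BlochOmega} with $s=r=1$ identifies $\HH^1_1(\Omega_H)$ with $\HH^1_\0(\Wast;\HH^1_\talpha(\Omega_H^\Lambda))$ via $\J_\Omega$, so the solution $u=\J_\Omega^{-1}w$ of~\eqref{eq:varFormHEScal} belongs to $\HH^1_1(\Omega_H)$.

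I expect the only delicate point — already essentially settled inside the proof of Theorem~\ref{th:exiSolScalPerio}(b) — to be the rigorous justification that $w$ is weakly $\talpha$-differentiable with $a_{k,\talpha}(w'_{1,2},v)=F_{\talpha_{1,2}}(v)$, i.e.\ that the $\talpha$-dependent Dirichlet-to-Neumann part of $a_{k,\talpha}$ may be differentiated termwise and the tail of the resulting series controlled uniformly in $\talpha$ (using the decay $|\hat w(\talpha,\j)|^2\le C(1+|\j|^2)^{-2}\|w(\talpha,\cdot)\|_{\HH^1_\talpha(\Omega_H^\Lambda)}^2$ coming from interior elliptic regularity of the extension~\eqref{eq:URCAlpha}), and that the vanishing set $U$ may indeed be taken open so that the denominators stay bounded away from zero on its complement. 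Everything else is a direct simplification of the estimates in the proof of Theorem~\ref{th:exiSolScalPerio}(b), the key observation being that the lone term obstructing $p=2$ there is now identically zero in a neighbourhood of $E_\LambdaAst$.
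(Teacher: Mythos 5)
Your proposal is correct and follows exactly the route the paper intends: the paper gives no separate proof of this corollary but points to estimate~\eqref{eq:aux1146}, observing that the only obstruction to $p=2$ is the singular weight $|k^2-|\LambdaAst\j+\talpha|^2|^{-p/2}$, which your splitting of $\Wast$ into the vanishing neighbourhood $U$ (where $w$, hence $F_{\talpha_{1,2}}$, is identically zero) and its complement (where the finitely many denominators are bounded below) removes. Your explicit handling of the $p=2$ instance of the hypothesis and the final identification via Theorem~\ref{th:BlochOmega} with $s=r=1$ fill in the details the paper leaves implicit, but the argument is the same.
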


The following example shows that the assumptions of Corollary~\ref{th:coconut} are satisfied for incident Herglotz wave functions $v_g$ if that the plane wave representation of $v_g$ does not incorporate plane waves traveling in directions orthogonal or nearly orthogonal to $\e_3$. 
 
\begin{example}\label{ex:1}
For any continuously differentiable function $g$ with compact support on the lower half-sphere, the Bloch transform of an incident Herglotz wave function $v_g$ is continuously differentiable in $\talpha \in \R^2$ and infinitely smooth in $\tx$, such that the corresponding solution $u$ to the scattering problem~\eqref{eq:varFormHEScal} belongs to $\HH^1_1(\Omega_H)$.   
By definition, 
\[
  v_g(\x) := \int_{\S^2_-} e^{\i k \, \x \cdot \bm\theta} g(\bm\theta) \dS{(\bm\theta)}
   \quad \text{for } \x \in \overline\Omega,
\]
where $\S^2_- := \{ \bm\theta \in \S^2: \, \bm\theta_3 < 0\}$ is the lower unit sphere, such that all plane waves in the representation of $v_g$ are propagating downwards, and $g: \, \S^2_- \to \C$. 
It is well-known that $v_g$ is an entire solution to the Helmholtz equation. 
The diffeomorphism $\Xi: \, \tilde\theta \mapsto (\tilde\theta, \, (1-|\tilde\theta|^2)^{-1/2})^\top$ from the unit disc $D =\{ \tilde{\bm\theta} \in \R^2: \, |\tilde{\bm\theta}| <1 \}$ onto $\S^2_-$ yields
\[
  v_g(\x)
  = \int_{D} e^{\i k \big[ \tx \cdot \tilde{\bm\theta} - \x_3 / [ 1-|\tilde{\bm\theta}|^2 ]^{1/2} \big]} \frac{g\big(\Xi(\tilde{\bm\theta})\big)}{\big[ 1-|\tilde{\bm\theta}|^2 \big]^{1/2}} \d{\tilde{\bm\theta}}, 
  \quad \x \in \overline\Omega. 
\]
The Bloch transform of $v_g$ hence formally equals
\[  
  \left[\J_\Omega v_g \right](\talpha, \x) 
  =  \frac{|\det \Lambda|}{2\pi}^{1/2} \sum_{\j\in\Z^2}  \int_{D} e^{\i \, \Lambda \j \cdot (k \tilde{\bm\theta} -\talpha)} e^{\i k \big[\tx \cdot \tilde{\bm\theta} - \x_3 / [ 1-|\tilde{\bm\theta}|^2 ]^{1/2} \, \big]} \frac{g\big(\Xi(\tilde{\bm\theta})\big)}{\big[ 1-|\tilde{\bm\theta}|^2 \big]^{1/2}} \d{\tilde{\bm\theta}}
\]
for $\talpha \in \Wast$ and $\x \in \overline\Omega$. 
As in~\cite[Lemma 5]{Lechl2015e} one shows that $\sum_{\j\in\Z^2} e^{\i \, \Lambda \j \cdot \bm\mu} = 2\pi/|\det \Lambda|^{-1/2} \, \delta_\LambdaAst(\bm\mu) \in \mathcal{D}'_\0(\R^2)$ is the $\LambdaAst$-periodic Dirac distribution at the origin, i.e., $\langle \delta_\LambdaAst , \psi \rangle = \sum_{\bell\in\Z^2} \psi(\LambdaAst\bell)$.  
The arguments proving Theorems 6 and 7 in~\cite{Lechl2015e} then show that the Bloch transform of $v_g$ equals 
\begin{align*}
  \left[\J_\Omega v_g \right](\talpha, \x) 
  & =  k  \sum_{\bell\in\Z^2: \, (\LambdaAst \bell+\talpha)/k \in D} e^{\i \big[ \tx \cdot (\LambdaAst \bell+\talpha) - \x_3 / [ k^2-|(\LambdaAst \bell+\talpha)|^2 ]^{1/2}) \big]} \frac{g\big(\Xi((\LambdaAst \bell+\talpha)/k)\big)}{\big[ k^2-|(\LambdaAst \bell+\talpha)|^2 \big]^{1/2}} , 
\end{align*}
for any $g \in C^\infty_0(\S^2_-)$. 
Indeed, the denominator of the last fraction vanishes if and only if $k=|\LambdaAst \bell+\talpha|$, that is, if and only if $|(\Lambda \bell+\talpha)|/k=1$, such that $\Xi((\LambdaAst \bell+\talpha)/k)$ belongs to the boundary of $\S^2_-$ where $g$ vanishes by its compact support.
Further, the finitely many series indices in $\{ \bell\in\Z^2: \, (\LambdaAst \bell+\talpha)/k \in D \}$ can only change at some $\talpha$ if $|\LambdaAst \bell +\talpha| = k$, such that $g \circ \Xi$ vanishes in a neighborhood of $\talpha$. 
Compactness of the support of $g$ hence implies that the regularity of $g$ transfers to the regularity of $\J_\Omega v_g$ in $\talpha$. 
(The latter function is anyway smooth in $\x$.)
Thus, for any continuously differentiable function with compact support in $\S^2_-$, the Bloch transform of $\J_\Omega v_g$ is continuously differentiable in $\talpha \in \R^2$ and infinitely smooth in $\x$. 
\end{example}

\section{Scattering from Locally Perturbed Periodic Surfaces}\label{se:pertScatt}
In this final section we analyze wave scattering from a locally perturbed periodic surface 
\[
  \Gamma_\s := \big\{ {\bm\zeta}_\s(\tilde{\y},0): \, \tilde{\y} \in \R^2 \big\}
\]
that is defined via a diffeomorphism $\bm\zeta_\s: \, \R^3 \to\R^3$ that equals $\zeta$ outside $\Omega_{H_0}^\Lambda$, i.e., ${\bm\zeta}_\s = {\bm\zeta}$ holds in $\R^3 \setminus \Omega_{H_0}^\Lambda$. 
The periodic surface $\Gamma = \big\{ {\bm\zeta}(\tilde{\y},0): \, \tilde{\y} \in \R^2 \big\}$ hence coincides with  $\Gamma_\s$ outside $\Omega_{H_0}^\Lambda$ (that is, for pre-images $\tilde{\y} \not \in \W$). 
Despite we have chosen the periodicity cell $\Omega_{H_0}^\Lambda$ as domain of perturbation, one can of course always consider perturbations in regions domain by replacing the periodicity matrix $\Lambda$ by $m\, \Lambda$ for some $m\in\N$.  
In analogy to $\Omega$ and $\Omega_H$, we introduce two domains 
\[
  \Omega_\s := \big\{ {\bm\zeta}_\s(\y): \, y \in \R^3, \y_3 >0 \big\}  
  \quad \text{and } \quad 
  \Omega_\s^H := \big\{ \x'\in \Omega_\s: \, \x_3'<H \big\} \quad \text{for some } H \geq H_0. 
\]
(Points in $\Omega_\s$ and $\Omega_\s^H$ will be denoted by $\x'$.)
Note that $\Psi_\s: \, \Omega \to \Omega_\s$ defined by $\Psi_\s = {\bm\zeta}_\s \circ {\bm\zeta}^{-1}$ is a Lipschitz diffeomorphism mapping $\Omega$ onto $\Omega_\s$. 
We denote its inverse by $\Phi_\s := {\bm\zeta} \circ {\bm\zeta}_\s^{-1}: \, \Omega_\s \to \Omega$ and note that the support of $\Psi_\s - I_3: \, \Omega \to \R^3$ is contained in the bounded domain $\Omega_H^\Lambda$.
In particular, $\Psi_\s: \, \Omega_H \to \Omega_\s^H$ and $\Phi_\s : \, \Omega_\s^H \to \Omega_H$ are isomorphisms for any $H\geq H_0$ that are inverses to each other. 

We introduce a boundary source $f \in \HH^{-1/2}_r(\Gamma_\s)$ for some $r>-1$ and a coefficient $\theta \in L^\infty(\Gamma_\s)$ such that $\theta$ equals the periodic Robin coefficient $\theta_\Lambda$ from Section~\ref{se:perioSurfScatt} on $\Gamma \setminus \Gamma_\Lambda$. 
For these boundary functions, we consider a locally perturbed periodic scattering problem with Robin or impedance boundary condition, 
\begin{equation}\label{eq:scalPert}
  \Delta u + k^2 u = 0 \quad \text{in } \Omega_\s^H, \quad 
  \frac{\partial u}{\partial\bm{\nu}} - \theta \, u = f \quad \text{on } \Gamma_\s, \quad 
  \frac{\partial u}{\partial\x_3'} = T^+ \big( \left. u \right|_{\Gamma_H}\big) \quad \text{on } \Gamma_H.
\end{equation}
(Again, the normal $\bm\nu$ on $\Gamma$ points into $\Omega$.) 
As in Section~\ref{se:perioSurfScatt}, the variational formulation of~\eqref{eq:scalPert} is to find $u\in \HH^1_r(\Omega_\s^H)$ solving 
\begin{equation}\label{eq:varFormPertScal}
  \int_{\Omega_\s^H} \left[ \nabla u \cdot \nabla \overline{v} - k^2 u \overline{v} \right] \d{\x'}
  + \int_{\Gamma_\s} \theta \, u \overline{v} \dS  
  - \int_{\Gamma_H} T^+(u) \overline{v} \dS  
  = \int_{\Gamma_\s} f \overline{v} \dS 
\end{equation}
for all $v \in \HH^1(\Omega_\s^H)$ with compact support included in $\overline{\Omega_H}$.
(As for~\eqref{eq:varFormHEScal}, this formulation is well-defined for $r>-1$.)   
The latter problem is by the transformation theorem equivalent to finding a solution $\x \mapsto u_\s(\x) = u \circ \Psi_\s(\x) \in \HH^1_r(\Omega_H)$ of the transformed variational problem 
\begin{equation}\label{eq:varFormPertScalPerio}
  \int_{\Omega_H} \left[ A_\s \nabla u_\s \cdot \nabla  \overline{v_\s} - k^2 \, c_\s \, u_\s \overline{v_\s} \right] \d{\x}
  + \int_{\Gamma} \theta_\s \, u_\s \overline{v_\s} \dS  
  - \int_{\Gamma_H} T^+(u_\s) \overline{v_\s} \dS  
  = \int_{\Gamma} f_\s \overline{v_\s} \dS 
\end{equation}
for all $v_\s \in \HH^1(\Omega_H)$ with compact support. 
By the transformation theorem the coefficients and right-hand side equal     
\begin{align*}
  A_\s(\x) & := |[\det \nabla_{\x'} \Phi_\s]|\circ \Psi_\s(\x) \ \big[ [ \nabla_{\x'} \Phi_\s] [\nabla_{\x'} \Phi_\s]^\top \big] \circ \Psi_\s(\x) \in L^\infty(\Omega_H, \ \R^{3\times 3}), \\
  c_\s(\x) & := |[\det \nabla_{\x'} \Phi_\s]|\circ \Psi_\s(\x) \in L^\infty(\Omega_H), \\ 
  \theta_\s(\x) & := |[\det \nabla_{\x'} \Phi_\s]|\circ \Psi_\s(\x) \ |[\nabla_{\x'} \Phi_\s]^{-\top} \bm{\nu}| \circ \Psi_\s(\x) \ \theta\circ\Psi_\s(\x) \in L^\infty(\Gamma), \quad \text{and} \\  
  f_\s(\x) & := |[\det \nabla_{\x'} \Phi_\s]|\circ \Psi_\s(\x) \ | [\nabla_{\x'} \Phi_\s]^{-\top} \bm{\nu}|\circ\Psi_\s(\x) \ f\circ\Psi_\s(\x) \in \HH^{-1/2}_r(\Gamma).
\end{align*}
%
%
\begin{remark}
The strong formulation of~\eqref{eq:varFormPertScalPerio} reads $\divSpace \left( A_\s \nabla u_\s \right) + k^2 c_\s u_\s = 0$ in $\Omega_H$ and $\bm{\nu} \cdot A_\s \nabla u_\s - \theta_\s u_\s = f_\s$ on $\Gamma$, subject to $\partial u_\s / \partial \x_3 = T^+(u_\s)$ on $\Gamma_H$. 
\end{remark}

We reformulate~\eqref{eq:varFormPertScalPerio} by applying the inverse Bloch transform composed with the Bloch transform to the weak solution $u_\s$ and afterwards exploit that $\J_\Omega^{-1}$ and $J_\Gamma^-1$ equal the $L^2$-adjoints $\J_\Omega^\ast$ and $J_\Gamma^\ast$, respectively. 
As $A_\s-I_3$ is supported in $\Omega_H^\Lambda$, the Bloch transform of $(A_\s-I_3) \nabla u \in L^2(\Omega_H)$ equals ${|\det \Lambda|}/({2\pi}^{1/2}) \, (A_\s-I_3) \nabla u$ in $L^2_\talpha(\Omega_H^\Lambda)^d$, such that it simplifies expressions if we once add and subtract the terms $\nabla u_\s \cdot \nabla \overline{v_\s}$ and $u_\s \, \overline{v_\s}$ in the first two integrals on the left of~\eqref{eq:varFormPertScalPerio}.  
The strong formulation of the boundary value problem hence reads   
\[
  \Delta_\x w_\s(\talpha,\cdot) + k^2 w_\s(\talpha, \cdot) 
  + \frac{|\det \Lambda|}{2\pi}^{1/2} \left[ \divSpace \big((A_\s-I_3) \, \nabla \J_{\Omega}^{-1} w_\s \big) + k^2 (c_\s - 1) \J_{\Omega}^{-1} w_\s \right] = 0 
\]
in $\Wast \times \Omega_H^\Lambda$, augmented by the boundary- and radiation conditions
\[ 
  \frac{\partial w_\s(\talpha, \cdot)}{\partial \bm{\nu}} + \theta_\Lambda \, w_\s(\talpha, \cdot) + \frac{|\det \Lambda|}{2\pi}^{1/2} (\theta_\s-\theta_\Lambda)  \J_{\Omega}^{-1} w (\talpha, \cdot) 
  = \J_{\Gamma} f_\s(\talpha, \cdot)  \quad \text{in } \Wast \times \Gamma_\Lambda, 
\]
and $(\partial w_\s / \partial \x_3) (\talpha, \cdot) = T_\talpha^+ \big( w_\s(\talpha,\cdot) |_{\Gamma_H^\Lambda}\big)$ on $\Wast \times \Gamma_H^\Lambda$.  
Recalling the sesquilinear form $a_{k,\talpha}$ from~\eqref{eq:heAlpha} and the variational formulation~\eqref{eq:varFormPertScalPerio} for $u_\s$, the variational formulation for $w_\s = \J_{\Omega} u_\s$ in $\HH^r_\0(\Wast; \HH^1_\talpha(\Omega_H^\Lambda))$ further reads 
\begin{align}
  & \int_{\Wast} a_{k,\talpha} \big( w_\s(\talpha,\cdot), \, \J_\Omega v_\s(\talpha,\cdot) \big) \d{\talpha}
  - \int_{\Gamma_\Lambda} (\theta_\s-\theta_\Lambda) \, \J_{\Gamma}^{-1} w_\s \, \overline{v_\s} \dS \label{eq:varFormScal} \\ 
  & + \int_{\Omega_H^\Lambda} \left[ (A_\s-I_3) \, \nabla \J_{\Omega}^{-1} w_\s \cdot \nabla \overline{v_\s} + k^2 (c_\s-1) \, \J_{\Omega}^{-1} w_\s\, \overline{v_\s} \right] \d{\x}  
  = \int_{\Wast} \int_{\Gamma_\Lambda} \J_\Gamma f_\s \, \overline{\J_\Gamma v_\s} \dS \d{\talpha} \nonumber 
\end{align} 
for all $v_s \in \HH^1(\Omega_H)$ with compact support in $\overline{\Omega_H}$.  
(See, e.g.,~\cite[Proof of Theorem 4 in Section 6.3]{Evans1998}.)
In contrast to~\eqref{eq:heAlpha}, this problem is coupled in $\talpha$. 
If $r \geq 0$, an equivalent formulation is to find $w_\s \in \HH^r_\0(\Wast; \HH^1_\talpha(\Omega_H^\Lambda))$ such that for almost all $\talpha \in \Wast$ and all $z_\talpha \in \H^1_\talpha(\Omega_H^\Lambda)$ there holds 
\begin{multline}
  a_{k,\talpha} \big( w_\s(\talpha,\cdot), \,z_\talpha \big) 
  - \int_{\Gamma_\Lambda} (\theta_\s-\theta_\Lambda) \, \J_{\Gamma}^{-1} w_\s \, \overline{z_\talpha} \dS \label{eq:varFormScalSimple}\\ 
      + \int_{\Omega_H^\Lambda}  \left[ (A_\s-I_3) \, \nabla \J_{\Omega}^{-1} w_\s \cdot \nabla \overline{z_\talpha} + k^2 (c_\s-1) \, \J_{\Omega}^{-1} w_\s\, \overline{z_\talpha} \right] \d{\x}  
  = \int_{\Gamma_\Lambda} \J_\Gamma f_\s(\talpha,\cdot) \, \overline{z_\talpha} \dS .
\end{multline} 

\begin{theorem}\label{th:scalPertEqui}
Assume that $f_\s$ belongs to $\HH^{-1/2}_r(\Gamma)$ for some $r>-1$. 
Then $u_\s \in \H^1_r(\Omega_\s^H)$ solves~\eqref{eq:varFormPertScalPerio} if and only if $w_\s = \J_{\Omega} u_\s \in \HH^r_\0(\Wast; \HH^1_\talpha(\Omega_H^\Lambda))$ solves~\eqref{eq:varFormScal} (or, alternatively,~\eqref{eq:varFormScalSimple} in case that $r \geq 0$).  
\end{theorem}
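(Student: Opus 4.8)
The plan is to prove the two variational identities equivalent term by term for each fixed test function, exploiting that $\J_{\Omega}$ and $\J_{\Gamma}$ are isomorphisms onto the periodic scales (Theorems~\ref{th:BlochOmega} and~\ref{th:Bloch}) and that their inverses coincide with their $L^2$-adjoints. First I set $w_\s:=\J_{\Omega}u_\s$; since $\Psi_\s$ identifies $\HH^1_r(\Omega_\s^H)$ with $\HH^1_r(\Omega_H)$ and $\J_{\Omega}$ maps the latter isomorphically onto $\HH^r_\0(\Wast;\HH^1_\talpha(\Omega_H^\Lambda))$, this already yields the claimed bijection of solution classes, so only the equivalence of the two identities remains. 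Fix $v_\s\in\HH^1(\Omega_H)$ of compact support and substitute $u_\s=\J_{\Omega}^{-1}w_\s=\J_{\Omega}^{\ast}w_\s$ into~\eqref{eq:varFormPertScalPerio}, after splitting $A_\s=I_3+(A_\s-I_3)$, $c_\s=1+(c_\s-1)$ and $\theta_\s=\theta_\Lambda+(\theta_\s-\theta_\Lambda)$.

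The \emph{unperturbed} contribution — the two integrals over $\Omega_H$ with coefficients $I_3$ and $1$, the $\theta_\Lambda$-boundary integral and the $T^+$-term — I would transform into $\int_{\Wast}a_{k,\talpha}\big(w_\s(\talpha,\cdot),\J_{\Omega}v_\s(\talpha,\cdot)\big)\d{\talpha}$ by combining four facts: (i) $\nabla_{\x}w_\s(\talpha,\cdot)=\J_{\Omega}(\nabla u_\s)(\talpha,\cdot)$, i.e.\ $\J_{\Omega}$ commutes with $\nabla_{\x}$ (Lemma~\ref{th:BlochPartDeri}); (ii) the Parseval identity $\langle\J_{\Omega}a,\J_{\Omega}b\rangle_{L^2(\Wast;L^2(\Omega_H^\Lambda))}=\langle a,b\rangle_{L^2(\Omega_H)}$ — read as a duality pairing when $r<0$ — which holds because $\J_{\Omega}^{-1}=\J_{\Omega}^{\ast}$; (iii) that $\J_{\Gamma}$ commutes with multiplication by the $\Lambda$-periodic $\theta_\Lambda$ as in~\eqref{eq:JComPerio}, together with $\J_{\Omega}v_\s|_{\Gamma}=\J_{\Gamma}(v_\s|_{\Gamma})$; and (iv) that the Bloch transform block-diagonalises $T^+$ into the operators $T_\talpha^+$, i.e.\ $\J_{\Gamma}\big(T^+(u_\s|_{\Gamma_H})\big)(\talpha,\cdot)=T_\talpha^+\big(\J_{\Gamma}(u_\s|_{\Gamma_H})(\talpha,\cdot)\big)$, which is exactly the angular-spectrum computation~\eqref{eq:blochHoppla} already carried out in the proof of Theorem~\ref{th:equiScalarPerio}. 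The \emph{perturbation} contribution has coefficients supported in $\overline{\Omega_H^\Lambda}$ resp.\ $\overline{\Gamma_\Lambda}$; only the $\j=0$ summand of~\eqref{eq:BlochOmega} then survives, so the Bloch transform acts on such data merely by multiplication with $(|\det\Lambda|/2\pi)^{1/2}$, and writing $u_\s=\J_{\Omega}^{-1}w_\s$, $u_\s|_{\Gamma}=\J_{\Gamma}^{-1}w_\s$ reproduces, up to that constant, precisely the cell integrals in~\eqref{eq:varFormScal}. Finally $\int_{\Gamma_\s}f_\s\overline{v_\s}\dS=\langle\J_{\Gamma}f_\s,\J_{\Gamma}v_\s\rangle_{L^2(\Wast;L^2(\Gamma_\Lambda))}$, again because $\J_{\Gamma}^{-1}=\J_{\Gamma}^{\ast}$, which is the right-hand side of~\eqref{eq:varFormScal}. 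Since every step is an equality for each fixed $v_\s$, $u_\s$ solves~\eqref{eq:varFormPertScalPerio} if and only if $w_\s$ solves~\eqref{eq:varFormScal}.

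For $r\geq0$ it remains to identify~\eqref{eq:varFormScal} with the $\talpha$-pointwise formulation~\eqref{eq:varFormScalSimple}. The implication \eqref{eq:varFormScalSimple}$\Rightarrow$\eqref{eq:varFormScal} I would obtain by testing~\eqref{eq:varFormScalSimple} with $z_\talpha=\J_{\Omega}v_\s(\talpha,\cdot)$ — admissible for a.e.\ $\talpha$ by Fubini — and integrating over $\Wast$: the $a_{k,\talpha}$-term and the right-hand side integrate directly to the corresponding terms of~\eqref{eq:varFormScal}, while the perturbation terms collapse through the $\j=0$ instance of the inversion formula~\eqref{eq:JOmegaInverse}, i.e.\ $\int_{\Wast}\J_{\Omega}v_\s(\talpha,\cdot)\d{\talpha}$ equals $v_\s$ on $\Omega_H^\Lambda$ up to the normalising constant. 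For the converse I would use a density argument: compactly supported $v_\s$ are dense in $\HH^1_r(\Omega_H)$, their Bloch transforms are dense in $\HH^r_\0(\Wast;\HH^1_\talpha(\Omega_H^\Lambda))$, and, letting $\J_{\Omega}v_\s(\talpha,\cdot)$ concentrate near a fixed quasimomentum and invoking Lebesgue's differentiation theorem, one recovers~\eqref{eq:varFormScalSimple} for a.e.\ $\talpha$ and every $z_\talpha$.

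The main obstacle is this last equivalence for $r\geq0$: the perturbation part of~\eqref{eq:varFormScal} genuinely couples all quasimomenta — it is the fixed coefficient $A_\s-I_3$ acting on the \emph{inverse} Bloch transform $\J_{\Omega}^{-1}w_\s$ — so passing between the $\talpha$-integrated and the $\talpha$-pointwise pictures is not purely formal and forces careful bookkeeping both of the constant $(|\det\Lambda|/2\pi)^{1/2}$ and of the test-function correspondence $v_\s\leftrightarrow\big(\J_{\Omega}v_\s(\talpha,\cdot)\big)_{\talpha\in\Wast}$. By contrast, the term-by-term identities of the first part are routine once Lemma~\ref{th:BlochPartDeri}, the adjoint-equals-inverse property, the commutation relation~\eqref{eq:JComPerio}, and the diagonalisation of $T^+$ from the proof of Theorem~\ref{th:equiScalarPerio} are in hand.
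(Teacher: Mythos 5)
Your proof is correct and takes essentially the same route as the paper, whose own proof is just a two-line reference back to the derivation preceding~\eqref{eq:varFormScal} and to Theorem~\ref{th:equiScalarPerio}: split the coefficients into the periodic part plus a cell-supported perturbation, transform the periodic part exactly as in the unperturbed case using $\J_\Omega^{-1}=\J_\Omega^\ast$, Lemma~\ref{th:BlochPartDeri} and the diagonalisation of $T^+$, and note that the Bloch transform acts on data supported in $\overline{\Omega_H^\Lambda}$ merely by multiplication with the normalising constant. Your extra care with the passage between~\eqref{eq:varFormScal} and~\eqref{eq:varFormScalSimple} and with the bookkeeping of $(|\det\Lambda|/2\pi)^{1/2}$ fills in details the paper leaves implicit, and is consistent with the intended argument.
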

\begin{proof}
The arguments from the proof of Theorem~\ref{th:equiScalarPerio} and those before~\eqref{eq:varFormScal} show that  $w_\s = \J_{\Omega} u_\s \in \HH^r_\0(\Wast; \HH^1_\talpha(\Omega_H^\Lambda))$ solves~\eqref{eq:varFormScal}; the reciprocal direction essentially follows by the mapping properties of $\J_{\Omega}$ and $\J_\Gamma$, see Theorem~\ref{th:BlochOmega}.   
\end{proof}

The next theorem shows that~\eqref{eq:varFormScal} is uniquely solvable if the perturbations ${\bm\zeta}_\s-{\bm\zeta}$ and $\theta \circ \Psi_\s - \theta_\Lambda = \theta \circ {\bm\zeta}_\s \circ {\bm\zeta}^{-1} - \theta_\Lambda$ are small enough. 

\begin{theorem}\label{th:fredholmScalar}
(a) Assume that $r \geq 0$, that $\Im \theta_\Lambda \geq 0$ and that each of the quasiperiodic problems in~\eqref{eq:heAlpha} is uniquely solvable in $\H^1_\talpha(\Omega_H^\Lambda)$. 
If the differences $\| {\bm\zeta}_\s  - {\bm\zeta} \|_{W^{1,\infty}(\{ y \in \R^3: \, \y_3>0\}, \, \R^{3\times 3})}$ and $\| \theta \circ {\bm\zeta}_\s \circ {\bm\zeta}^{-1} - \theta_\Lambda \|_{L^\infty(\Gamma)}$ additionally are small enough, then~\eqref{eq:varFormScal} is uniquely solvable in $L^2(\Wast; \HH^1_\talpha(\Omega_H^\Lambda))$ for all $f_\s \in \HH^{-1/2}(\Gamma)$. 
Further, the variational formulation~\eqref{eq:varFormPertScal} involving the perturbed periodic surface $\Gamma_\s$ is then uniquely solvable in $\HH^1(\Omega_\s^H)$.  

(b) Under the assumptions of part (a), assume further that $f_\s \in \HH^{-1/2}_1(\Gamma)$ satisfies that $\talpha \mapsto  \J_\Omega f_\s(\talpha,\cdot)$ vanishes in a neighborhood of the $\LambdaAst$-periodic set $\{ \bm\xi \in \Wast: \, \exists \, \j \in \Z^2 \text{ s.~th. } |\LambdaAst\j + \bm\xi | = k \}$. 
Then the solution $w_\s$ to~\eqref{eq:varFormScal} belongs to $\HH^1_\0(\Wast; \HH^1_\talpha(\Omega_H^\Lambda))$ and the solution $u$ to the variational formulation~\eqref{eq:varFormPertScal} involving the perturbed periodic surface $\Gamma_\s$ belongs to $\HH^1_1(\Omega_\s^H)$.  
\end{theorem}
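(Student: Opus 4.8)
Part~(a) is a small‑perturbation argument around the decoupled problem of Section~\ref{se:perioSurfScatt}, and part~(b) bootstraps part~(a) together with Corollary~\ref{th:coconut}. Throughout I would work with the transformed problems~\eqref{eq:varFormScal}/\eqref{eq:varFormScalSimple} for $w_\s=\J_\Omega u_\s$: by Theorem~\ref{th:scalPertEqui}, together with the equivalences \eqref{eq:varFormPertScal}$\Leftrightarrow$\eqref{eq:varFormPertScalPerio}$\Leftrightarrow$\eqref{eq:varFormScal}, the mapping properties of $\J_\Omega$ (Theorem~\ref{th:BlochOmega}), and the fact that $\Phi_\s=\bm\zeta\circ\bm\zeta_\s^{-1}$ is a Lipschitz diffeomorphism that coincides with the identity outside the \emph{bounded} set $\Omega_{H_0}^\Lambda$, every statement about $w_\s$ transfers back to $u=u_\s\circ\Phi_\s$ on $\Omega_\s^H$, including the weighted conclusion $u\in\HH^1_1(\Omega_\s^H)$ of part~(b) since composition with $\Phi_\s$ preserves $\HH^1_1$.

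\emph{Part (a).} On the Hilbert space $\mathcal{X}:=L^2(\Wast;\HH^1_\talpha(\Omega_H^\Lambda))$ (identified with its dual, using $r=0$) I introduce the bounded operators $\mathcal{A}_0$ and $\mathcal{A}_\s$ associated with the decoupled form $\int_\Wast a_{k,\talpha}(\cdot,\cdot)\,\d\talpha$ and with the full left‑hand side of~\eqref{eq:varFormScal}, so that $\mathcal{B}:=\mathcal{A}_\s-\mathcal{A}_0$ collects exactly the three perturbation terms. As in the proof of Theorem~\ref{th:exiSolScalPerio} — continuity of $\talpha\mapsto A_{k,\talpha}$, $\LambdaAst$‑periodicity, compactness of $\Wast$, and the hypothesis that each $a_{k,\talpha}$ is uniquely solvable — the operator $\mathcal{A}_0$ is invertible with $\|\mathcal{A}_0^{-1}\|\le\sup_{\talpha\in\Wast}\|A_{k,\talpha}^{-1}\|<\infty$. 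The key estimate to establish is
\[
  \|\mathcal{B}\|\ \le\ C\Big(\|A_\s-I_3\|_{L^\infty(\Omega_H^\Lambda)}+\|c_\s-1\|_{L^\infty(\Omega_H^\Lambda)}+\|\theta_\s-\theta_\Lambda\|_{L^\infty(\Gamma_\Lambda)}\Big)
\]
with $C$ independent of the perturbation, which I would prove using that all three coefficient differences are supported in one periodicity cell (so $\J_\Omega$ acts on them as multiplication by $|\det\Lambda|^{1/2}/(2\pi)$, cf.~\eqref{eq:BlochOmega}) and that $\J_\Omega$, $\J_\Omega^{-1}=\J_\Omega^\ast$, $\J_\Gamma$, $\J_\Gamma^{-1}$ are bounded on the relevant $L^2$‑ and $\HH^1$‑scales (Theorems~\ref{th:BlochOmega},~\ref{th:Bloch}). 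Since $A_\s,c_\s,\theta_\s$ depend locally Lipschitz‑continuously on $\nabla_{\x'}\Phi_\s$, with $\nabla_{\x'}\Phi_\s\to I_3$, $A_\s\to I_3$, $c_\s\to 1$, $\theta_\s\to\theta\circ\Psi_\s$ as $\bm\zeta_\s\to\bm\zeta$ in $W^{1,\infty}$, the triangle inequality bounds all three differences by $\|\bm\zeta_\s-\bm\zeta\|_{W^{1,\infty}}$ plus $\|\theta\circ\bm\zeta_\s\circ\bm\zeta^{-1}-\theta_\Lambda\|_{L^\infty(\Gamma)}$. A Neumann series then makes $\mathcal{A}_\s=\mathcal{A}_0(\mathrm{Id}+\mathcal{A}_0^{-1}\mathcal{B})$ invertible on $\mathcal{X}$ once these quantities are small enough, which proves part~(a); the $\HH^1$‑conclusions on $\Omega_H$ resp.\ $\Omega_\s^H$ follow from Theorem~\ref{th:BlochOmega} and the diffeomorphism property of $\Phi_\s$.

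\emph{Part (b).} I would keep the operator equation $\mathcal{A}_\s w_\s=G$, with $G$ the functional induced by $\J_\Gamma f_\s$, split $w_\s=w^{(0)}+\tilde w$ with $w^{(0)}:=\mathcal{A}_0^{-1}G$, and observe that $\tilde w$ solves the \emph{decoupled} family $a_{k,\talpha}(\tilde w(\talpha,\cdot),z_\talpha)=-\langle\mathcal{B}w_\s,z_\talpha\rangle$. Since $f_\s\in\HH^{-1/2}_1(\Gamma)$ gives $\J_\Gamma f_\s\in\HH^1_\0(\Wast;\HH^{-1/2}_\talpha(\Gamma_\Lambda))$ (Theorem~\ref{th:Bloch}) and $\J_\Gamma f_\s$ vanishes near the exceptional set $E_\LambdaAst=\{\bm\xi:\exists\,\j\ \text{s.th.}\ |\LambdaAst\j+\bm\xi|=k\}$, Corollary~\ref{th:coconut} yields $w^{(0)}\in\HH^1_\0(\Wast;\HH^1_\talpha(\Omega_H^\Lambda))$. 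For $\tilde w$ the right‑hand side $-\mathcal{B}w_\s$ is, by the same localization as in part~(a), \emph{constant in $\talpha$} (it is $\J_\Omega$ of a single‑cell‑supported distribution built from $\J_\Omega^{-1}w_\s$ and $\J_\Gamma^{-1}w_\s$, which lie in $\HH^1$ by part~(a)), hence smooth in $\talpha$. Re‑running the $\talpha$‑differentiation estimate from the proof of Theorem~\ref{th:exiSolScalPerio}(b) for this smooth source, the only potentially non‑$L^2$ contribution to $\partial_{\talpha_{1,2}}\tilde w$ is the term $\i\sum_{\j\in I}\beta_\j(\talpha)^{-1}(\LambdaAst\j+\talpha)_{1,2}\,\widehat{\tilde w}(\talpha,\j)\,\overline{\hat z(\j)}$ stemming from $\partial_\talpha T_\talpha^+$, and one must show it lies in $L^2(\Wast)$. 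Concluding then gives $w_\s\in\HH^1_\0(\Wast;\HH^1_\talpha(\Omega_H^\Lambda))$, whence $u_\s=\J_\Omega^{-1}w_\s\in\HH^1_1(\Omega_H)$ by Theorem~\ref{th:BlochOmega} and $u=u_\s\circ\Phi_\s\in\HH^1_1(\Omega_\s^H)$.

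\emph{Main obstacle.} The delicate point is precisely this last step of part~(b): controlling the $\partial_\talpha T_\talpha^+$ contribution near $E_\LambdaAst$ for the perturbation‑induced field $\tilde w$. For $w^{(0)}$ this is harmless because $w^{(0)}$ itself vanishes near $E_\LambdaAst$ (as $G$ does), so its grazing Fourier modes vanish there and the $\beta_\j^{-1}$ singularity is never seen; but the $\talpha$‑constant source produced by $\mathcal{B}$ does \emph{not} vanish near $E_\LambdaAst$, so one has to show directly that the grazing‑mode amplitudes $\widehat{\tilde w}(\talpha,\j)$, $\j\in I$, tend to zero towards $E_\LambdaAst$ fast enough to absorb the (order‑$1/2$) singularity of $\beta_\j^{-1}$, and to do so uniformly and compatibly with the Neumann iteration — re‑using the finite‑length/integrability estimate for the hypersurfaces $E_\LambdaAst^{(\j,\delta)}$ from the proof of Theorem~\ref{th:exiSolScalPerio}(b) together with the smallness of $\|\mathcal{B}\|$. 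If the argument only closes at the level $L^{2-}$, one obtains the weaker conclusion $w_\s\in\WW^{1,p}_\0(\Wast;\HH^1_\talpha(\Omega_H^\Lambda))$ for $1\le p<2$, in exact analogy with Theorem~\ref{th:exiSolScalPerio}(b) and Remark~\ref{th:chandler}(b).
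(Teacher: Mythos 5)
Your part (a) is essentially the paper's argument: the same coefficient estimates $\| A_\s - I_3 \|_{L^\infty} + \| c_\s - 1\|_{L^\infty} + \| \theta_\s - \theta_\Lambda \|_{L^\infty} \leq C h$, followed by a small perturbation of the invertible decoupled operator (the paper invokes openness of the set of invertible operators where you write a Neumann series, which is the same device), and the transfer back to $\Omega_\s^H$ via Theorem~\ref{th:scalPertEqui}. No objection there.

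For part (b) you follow the same route as the paper --- differentiate~\eqref{eq:varFormScal} in $\talpha$, note that the coupling terms are $\talpha$-independent so that $w'_{1,2}$ solves a decoupled family with right-hand side $F_{\talpha_{1,2}}$ as in~\eqref{eq:RHSDeriAlpha}, and reuse estimate~\eqref{eq:aux1146} --- but you stop exactly where the argument must be closed, and you say so. That is a genuine gap: as written, your proposal yields only $w_\s \in \WW^{1,p}_\0(\Wast;\HH^1_\talpha(\Omega_H^\Lambda))$ for $1\leq p<2$, not the asserted $\HH^1_\0$ membership. The paper closes this step by citing~\eqref{eq:aux1146} together with Corollary~\ref{th:coconut}: there the singular factor $|k^2-|\LambdaAst\j+\talpha|^2|^{-1/2}$ is multiplied by $|\hat{w}(\talpha,\j)| \leq C\, \| w(\talpha,\cdot)\|_{\HH^1_\talpha(\Omega_H^\Lambda)} \leq C\, \| \J_\Gamma f(\talpha,\cdot)\|_{\HH^{-1/2}_\talpha}$, which vanishes near $E_\LambdaAst$ by hypothesis. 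Your worry is precisely that in the perturbed problem this pointwise-in-$\talpha$ bound degrades to $\| w_\s(\talpha,\cdot)\| \leq C\big(\| \J_\Gamma f_\s(\talpha,\cdot)\| + h\, \| w_\s \|_{L^2(\Wast;\HH^1_\talpha(\Omega_H^\Lambda))}\big)$ because the perturbation source is constant in $\talpha$; hence the grazing coefficients $\hat{w}_\s(\talpha,\j)$, $\j\in I$, need not vanish near $E_\LambdaAst$, and the corresponding term of $F_{\talpha_{1,2}}$ is then only in $L^p(\Wast)$ for $p<2$. This concern is legitimate, and the paper's one-line reduction to Corollary~\ref{th:coconut} does not dispose of it either. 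To complete the proof along either your route or the paper's, one must show that the grazing Fourier coefficients of the \emph{full} solution $w_\s$ (not merely of the decoupled part $w^{(0)}=\mathcal{A}_0^{-1}G$) vanish at $E_\LambdaAst$ fast enough to absorb the order-$1/2$ singularity --- which does not follow from the vanishing of $\J_\Gamma f_\s$ alone --- or else weaken the conclusion of (b) to $\WW^{1,p}_\0$, $p<2$, in analogy with Theorem~\ref{th:exiSolScalPerio}(b).
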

\begin{remark}
For scattering of an incident Herglotz wave function $v_g$ from $\Gamma_\s$, the scattered field is described by~\eqref{eq:varFormPertScal} with boundary datum $f = - (\partial v_g/\partial\bm\nu) + \theta v_g$. 
Example~\ref{ex:1} shows that if $g$ is continuously differentiable with compact support contained in the open lower half-sphere, then $f$ satisfies the conditions of Theorem~\ref{th:fredholmScalar}(b), such that the scattered field belongs to $\HH^1_1(\Omega_\s^H)$. 
\end{remark}
\begin{proof}
(a) 
We omit to denote the image space $\R^{3\times 3}$ of $A_\s$ explicitly. 
The assumption that  $\| {\bm\zeta}_\s  - {\bm\zeta} \|_{W^{1,\infty}(\{ 0<\y_3<H\})} < h$ for some $h>0$ implies that   
\begin{align*}
  \|  \x' \mapsto \Phi_\s(\x') -  \x' \|_{W^{1,\infty}(\{ 0<\y_3<H\})} 
  & = \|  \x' \mapsto [{\bm\zeta}- {\bm\zeta}_\s] \circ {\bm\zeta}_\s^{-1}(\x')\|_{W^{1,\infty}(\{ 0<\y_3<H\})} \\
  & \leq h \, \| {\bm\zeta}_\s^{-1} \|_{W^{1,\infty}(\R^3)}. 
  %
\end{align*}
In particular, $\|  \x' \mapsto \nabla (\Phi_\s(\x') -  \x') \|_{L^\infty(\{ 0<\y_3<H\})}\leq h \, \| {\bm\zeta} \|_{W^{1,\infty}(\R^3)}$.
Continuity of the determinant in particular implies that $| \, |\det \nabla \Phi_\s(\x')| - 1| = | \, |\det \nabla \Psi_\s(\x')| - \det \nabla \x'| \leq C h$ for some constant $C>0$. 
Thus, 
\begin{align*}
  \| A_\s - I_3 & \|_{L^\infty(\Omega_H^\Lambda)} 
  = 
  \big\|  \x \mapsto \big[|[\det \nabla_{\x'} \Phi_\s]| \big[ [ \nabla_{\x'} \Phi_\s] [\nabla_{\x'} \Phi_\s]^\top  \big] \big] \circ \Psi_\s(\x) - \x \big\|_{L^\infty(\Omega_H^\Lambda)} \\
  \leq 
  &  \big\| \big[|[\det \nabla_{\x'} \Phi_\s]| \big[ [ \nabla_{\x'} \Phi_\s] [\nabla_{\x'} \Phi_\s]^\top\big]\big](\x') - \Phi_\s(\x') \|_{L^\infty(\{ 0<\y_3<H\})} \| \Psi_\s \|_{L^\infty(\Omega_H^\Lambda)} \\
  \leq 
  & C \big\| |\det \nabla \Phi_\s (\x')| \,\big[ \nabla \Phi_\s(\x')\,\nabla \Phi_\s(\x')^\top \big] - \big[ \nabla \Phi_\s(\x')\,\nabla \Phi_\s(\x')^\top \big] \big\|_{L^\infty(\{ 0<\y_3<H\})}  \\
  & \quad + \| \nabla \Phi_\s(\x')\,\nabla \Phi_\s(\x')^\top   - \Phi_\s(\x')^\top] \|_{L^\infty(\{ 0<\y_3<H\})} 
 \leq C h.    
\end{align*}
The proof that $\| c_\s - 1 \|_{L^\infty(\Omega_H^\Lambda)} \leq C h$ follows by similar estimates. 
Further, 
\begin{align*}
  \| \theta_\s - \theta_\Lambda&  \|_{L^\infty(\Gamma)} 
  = 
  \big\| |[\det \nabla_{\x'} \Phi_\s]|\circ \Psi_\s(\x) \ |[\nabla_{\x'} \Phi_\s]^{-\top} \bm{\nu}| \circ \Psi_\s(\x) \ \theta \circ \Psi_\s(\x) - \theta_\Lambda(\x) \|_{L^\infty(\Gamma)} \\
  & \leq \big\| \x' \mapsto (|\det \nabla \Phi_\s (\x')|-1) |\nabla \Phi_\s'(\x')^{-\top} \bm{\nu}(\x')| \, \theta(\x') \big\|_{L^\infty(\Gamma_\s)} \\
  & \quad + \big\| \x' \mapsto \big[|\nabla \Phi_\s(\x')^{-\top} \bm{\nu}(\x')|-1 \big] \theta(\x') \big\|_{L^\infty(\Gamma_\s)}  
   + \| \theta\circ\Phi_\s - \theta_\Lambda \|_{L^\infty(\Gamma)} \\
  & \leq C h \| \theta \|_{L^\infty(\Gamma_\s)}
  + C \big\| \x \mapsto \nabla (\Phi_\s(\x')^{-1} - \x') \big\|_{L^\infty(\Gamma_\s)} \, \| \theta \big\|_{L^\infty(\Gamma_\s)} \\
  & \quad + \| \theta\circ\Psi_\s - \theta_\Lambda \|_{L^\infty(\Gamma)}
  \leq C h \| \theta \|_{L^\infty(\Gamma_\s)} + \| \theta\circ\Psi_\s - \theta_\Lambda \|_{L^\infty(\Gamma)}.   
\end{align*}
If $\| \theta\circ\Psi_\s - \theta_\Lambda \|_{L^\infty(\Gamma)} \leq h$, then $\theta$ is bounded in norm by $(1+h) \| \theta_\Lambda \|_{L^\infty(\Gamma)}$, such that 
\[
  \| \theta_\s - \theta_\Lambda \|_{L^\infty(\Gamma)}
  \leq C h \| \theta_\Lambda \|_{L^\infty(\Gamma)}.
\]
As $a_{k,\talpha}$ is by assumption invertible (see Theorem~\ref{th:uniqueAlpha}) for each $\talpha$, Theorem~\ref{th:exiSolScalPerio} states that~\eqref{eq:varFormScal} is solvable in $L^2(\Wast; \HH^1_\talpha(\Omega_H^\Lambda))$ when the non-periodic terms involving $A_\s$, $c_\s$ and $\theta$ are omitted. 
All further parts of the sesquilinear form in~\eqref{eq:varFormScal} are perturbations that are uniformly bounded in $\talpha$ and can be made arbitrarily small by choosing $h$ small enough due to the above estimates. 
As the set of invertible operators is an open subset of the space of bounded operators with respect to the operator norm, choosing $h$ small enough is sufficient for the variational problem~\eqref{eq:varFormScal} to possess a unique solution $w_\s \in L^2(\Wast; \HH^1_\talpha(\Omega_H^\Lambda))$ for any $f_\s \in \HH^{-1/2}(\Gamma)$. 
By Theorem~\ref{th:scalPertEqui}, $J_\Omega^{-1} w_\s \in \HH^1(\Omega_\s^H)$ then solves~\eqref{eq:varFormPertScal}.

(b) Computing derivatives of the solution $w_\s(\talpha,\cdot)$ to~\eqref{eq:varFormScal} with respect to $\talpha_{1,2}$ as in the proof of Theorem~\ref{th:exiSolScalPerio} shows that these functions satisfy a variational problem with the sesquilinear form from~\eqref{eq:varFormScal} and the right-hand side $F_{\talpha_{1,2}}$ from~\eqref{eq:RHSDeriAlpha}. 
The proof of Theorem~\ref{th:exiSolScalPerio}(b), see in particular estimate~\ref{eq:aux1146}, hence shows that if the support of $\talpha \mapsto \J_\Gamma w_{1,2}'$ does not include vectors $\bm\xi$ such that $|\LambdaAst \j + \bm\xi| = k$ ensures that $w_{1,2}'$ belongs to $L^2(\Wast; \HH^1_\talpha(\Omega_H^\Lambda))$ (compare Corollary~\ref{th:coconut}). 
\end{proof}



\providecommand{\noopsort}[1]{}

\end{document}